\title[]{
Extrinsic diameter of immersed flat tori \\
 in the $3$-sphere I\!I}
\numberwithin{equation}{section}
\theoremstyle{plain}
 \newtheorem{theorem}{Theorem}[section]
 \newtheorem*{theorem*}{Theorem}
 \newtheorem*{lemma*}{Lemma}
 \newtheorem{proposition}[theorem]{Proposition}
 \newtheorem{fact}[theorem]{Fact}
 \newtheorem*{fact*}{Fact}
 \newtheorem{lemma}[theorem]{Lemma}
 \newtheorem{corollary}[theorem]{Corollary}
\theoremstyle{remark}
 \newtheorem{definition}[theorem]{Definition}
 \newtheorem{remark}[theorem]{Remark}
 \newtheorem*{remark*}{Remark}
\numberwithin{equation}{section}
\newcommand{\R}{{\Bbb R}}
\newcommand{\Z}{{\Bbb Z}}
\newcommand{\braket}[1]{\langle{#1}\rangle}
\renewcommand{\phi}{\varphi}
\renewcommand{\epsilon}{\varepsilon}
\newcommand{\op}{\operatorname}
\newcommand{\su}{\mathfrak s\mathfrak u}
\newcommand{\G}{{\gamma_1,\gamma_2}}
\newcommand{\GB}{{\bar\gamma_1,\bar \gamma_2}}
\newcommand{\g}{\gamma}
\newcommand{\gb}{\bar{\gamma}}
\newcommand{\cb}{\bar{c}}
\author{Kazuyuki~Enomoto}
\address[Enomoto]{%
Tokyo University of Science, Kagurazaka, Shinjuku-ku, 
Tokyo 162-8601, Japan
}
\email{enomoto\_kazuyuki@rs.tus.ac.jp}
\author{Yoshihisa~Kitagawa}
\address[Kitagawa]{%
Department of Mathematics, Utsunomiya University,
Mine-machi, Utsunomiya 321-8505, Japan
}
\email{kitagawa@cc.utsunomiya-u.ac.jp}
\author{Masaaki Umehara}
\address[Umehara]{%
Department of Mathematical and Computing Sciences,
Tokyo Institute of Technology
2-12-1-W8-34, O-okayama Meguro-ku
Tokyo 152-8552 Japan
}
\email{umehara@is.titech.ac.jp}
\subjclass[2000]{Primary 
53C42, 
\,\, Secondary 53C40.
}
\thanks{
The third author was partly supported by the Grant-in-Aid for 
Scientific Research (A) 262457005, 
Japan Society for the Promotion of Science.
}
\begin{document}
\begin{abstract}
We show that the extrinsic diameter of
immersed flat tori in the $3$-sphere
is $\pi$ under a certain topological
condition for the projection of
their asymptotic curves with respect to
the Hopf fibration.
\end{abstract}
\maketitle

\section{Introduction}\label{sec1}
Let $S^3$ be the unit sphere in the Euclidean $4$-space
$\R^4$. An immersed surface in $S^3$ is called {\it flat} if
its Gaussian curvature vanishes identically. 
In \cite{EKW} and \cite{KU}, the following problem was posed:

\medskip
\noindent
{\bf Problem.}
{\it Is the extrinsic diameter of an immersed 
flat torus in $S^3$ equal to $\pi$?}

\medskip
The problem is affirmative under the assumptions that
\begin{itemize}
\item[(a)] $f$ is an embedding (cf. \cite{EKW}), or
\item[(b)] the mean curvature function of $f$ does not
change sign (cf. \cite{KU}).
\end{itemize}
If this problem is solved affirmatively,
the rigidity of Clifford tori follows, as pointed out in 
\cite{EKW}. (Weiner \cite{Wei1} also noticed this reduction.)
The references \cite{DS, Wei2, Wei3} are
also related to the problem. 
To investigate global properties of immersed 
flat tori, a representation formula 
given by the second author \cite{Ki1}
is useful:
In this paper, a closed regular curve
means a smooth map
$
\gamma:\R\to S^2 
$
of period $l(>0)$, that is,
\begin{equation}\label{eq:01}
\gamma(s+l)=\gamma(s)\qquad (s\in \R).
\end{equation}
We may think of $\gamma$ as a map 
defined on $\R/l\Z$.
Consider two closed regular curves 
$\gamma_i:\R/l_i\Z\to S^2$ ($i=1,2$).
We denote by $\kappa_i(s)$ the 
geodesic curvature of $\gamma_i$
at $s\in \R$. 
We then fix a positive number $\mu$.
A pair 
$(\gamma_1,\gamma_2)$ of closed regular curves
is said to be {\it $\mu$-admissible}
if the geodesic curvature
of $\gamma_1$ is greater than the maximum of
that of $\gamma_2$,
that is, 
\begin{equation}\label{eq:k1k2}
\min_{t\in \R} \kappa_1(t)
> \mu >  \max_{t\in \R} \kappa_2(t).
\end{equation}
The second author showed in  \cite{Ki1}
that all immersed flat tori can be constructed
from $\mu$-admissible pairs (see Section \ref{sec2}
for details).
If $f_{\gamma_1,\gamma_2}$ is the
immersed flat tori associated to the
pair $(\gamma_1,\gamma_2)$, then 
each $\gamma_i$ ($i=1,2$) can be interpreted
as the projection of a certain asymptotic curve 
of $f_{\gamma_1,\gamma_2}$ via 
the Hopf fibration $p:S^3\to S^2$
(see Remark \ref{rmk:asymptotic} 
in Section~\ref{sec2}). 
We then solve the problem affirmatively 
under a certain topological condition 
for the projections of
the asymptotic curves with respect to
the Hopf fibration,
instead of geometric condition (b)
as in the previous paper \cite{KU}.

To consider the problem, we may assume that
$\gamma_1$ and $\gamma_2$ admit only 
transversal double points (called {\it crossings}), 
without loss of generality
(cf. Proposition \ref{prop:generic2}).
We denote by $\#(\gamma_i)$ ($i=1,2$)
the number of crossings of $\gamma_i$.
If either $\#(\gamma_1)$ or $\#(\gamma_2)$ is
an even number,
then the extrinsic diameter of the immersed tori
associated with $(\gamma_1,\gamma_2)$
is equal to $\pi$ (see Fact \ref{fact:I1}).
So we may assume that $\#(\gamma_1)$ and $\#(\gamma_2)$
are odd integers.
Thus the first non-trivial case
happens when $\#(\gamma_1)=\#(\gamma_2)=1$.
The topological type
of such $\gamma_i$ ($i=1,2$)
are uniquely determined (that is, 
the image of $\gamma_i$ ($i=1,2$)
is homeomorphic to that of the 
eight-figure curve).
Let $p:S^2\setminus \{(0,0,1)\}\to \R^2$
be the stereographic projection.
Then we can draw such a pair 
as plane curves via this projection,
as in Figure \ref{Fig:crossing1}.

\begin{figure}[h]
\begin{center}
        \includegraphics[height=1.8cm]{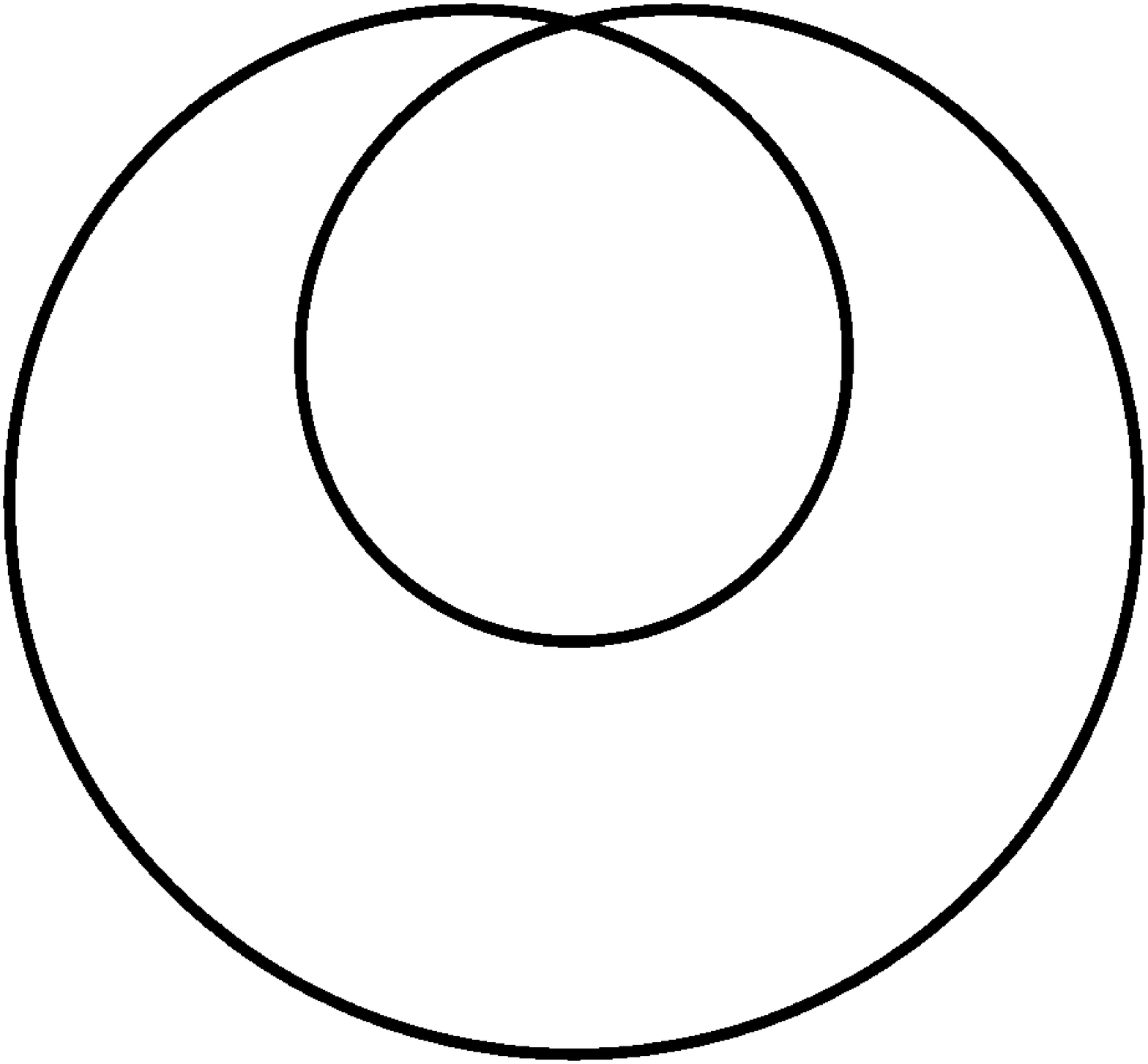}\qquad \qquad
        \includegraphics[height=0.9cm]{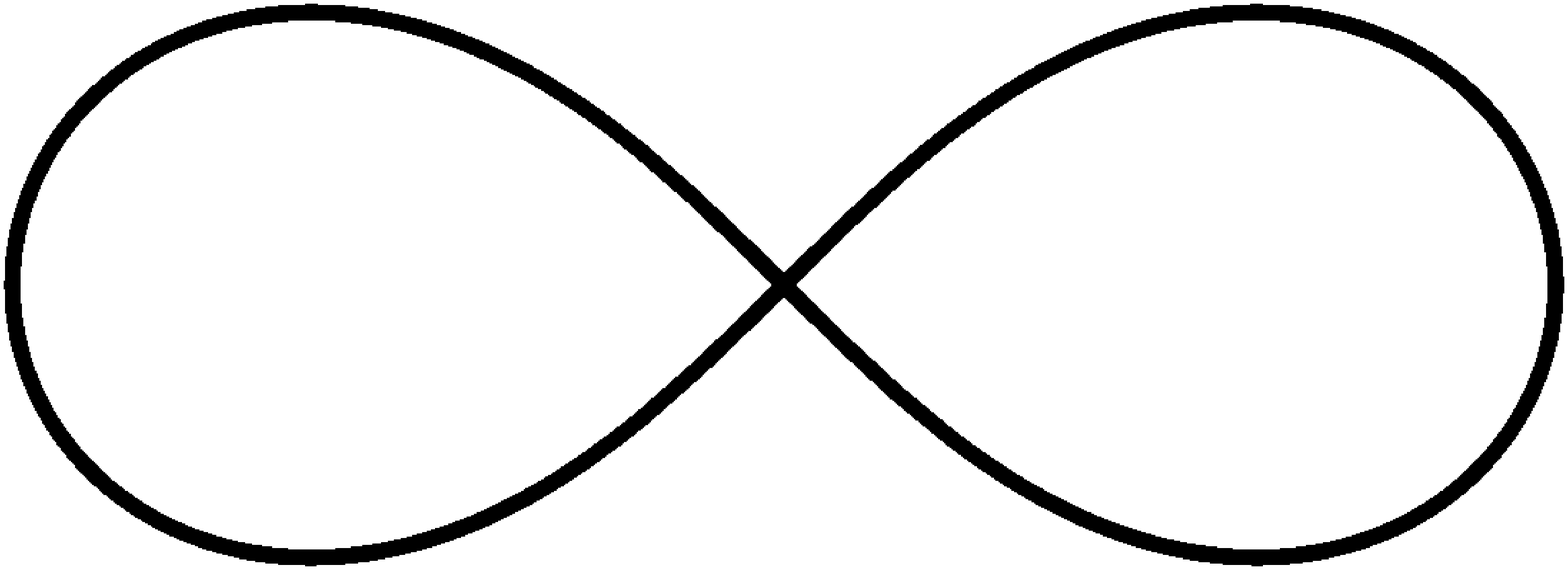}

\caption{A pair of closed curves with $\#(\gamma_1)=\#(\gamma_2)=1$}
\label{Fig:crossing1}
\end{center}
\end{figure}

Unfortunately, even in this particular case,
the techniques in the previous paper \cite{KU}
do not seem to be sufficient.
One of the important results of this paper is 
the following assertion:

\medskip
\noindent
{\bf Proposition A.}\
{\it Suppose that $(\gamma_1,\gamma_2)$ is 
a $\mu$-admissible pair
so that $\#(\gamma_1)=\#(\gamma_2)=1$
$($as in Figure~1$)$.
Then the immersed flat torus associated
to the pair $(\gamma_1,\gamma_2)$
has extrinsic diameter $\pi$.}

\begin{figure}[h!]
\begin{center}
        \includegraphics[height=3cm]{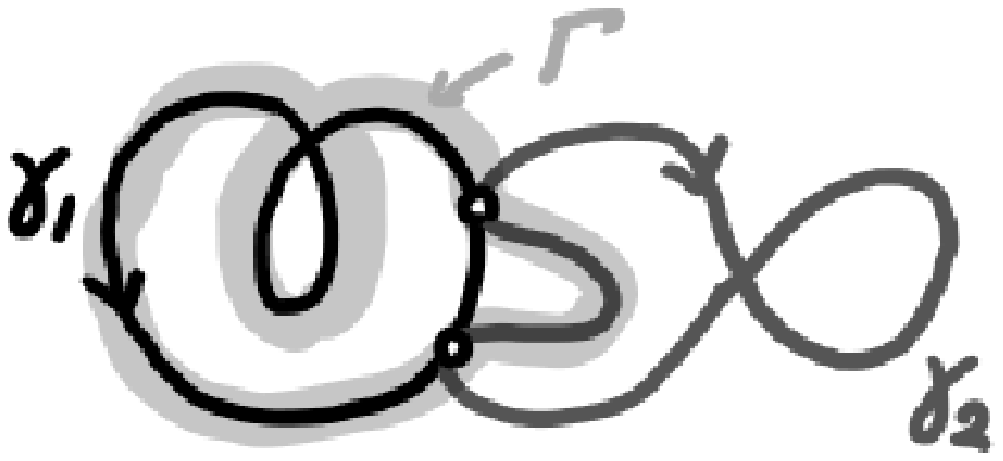}
        \includegraphics[height=3cm]{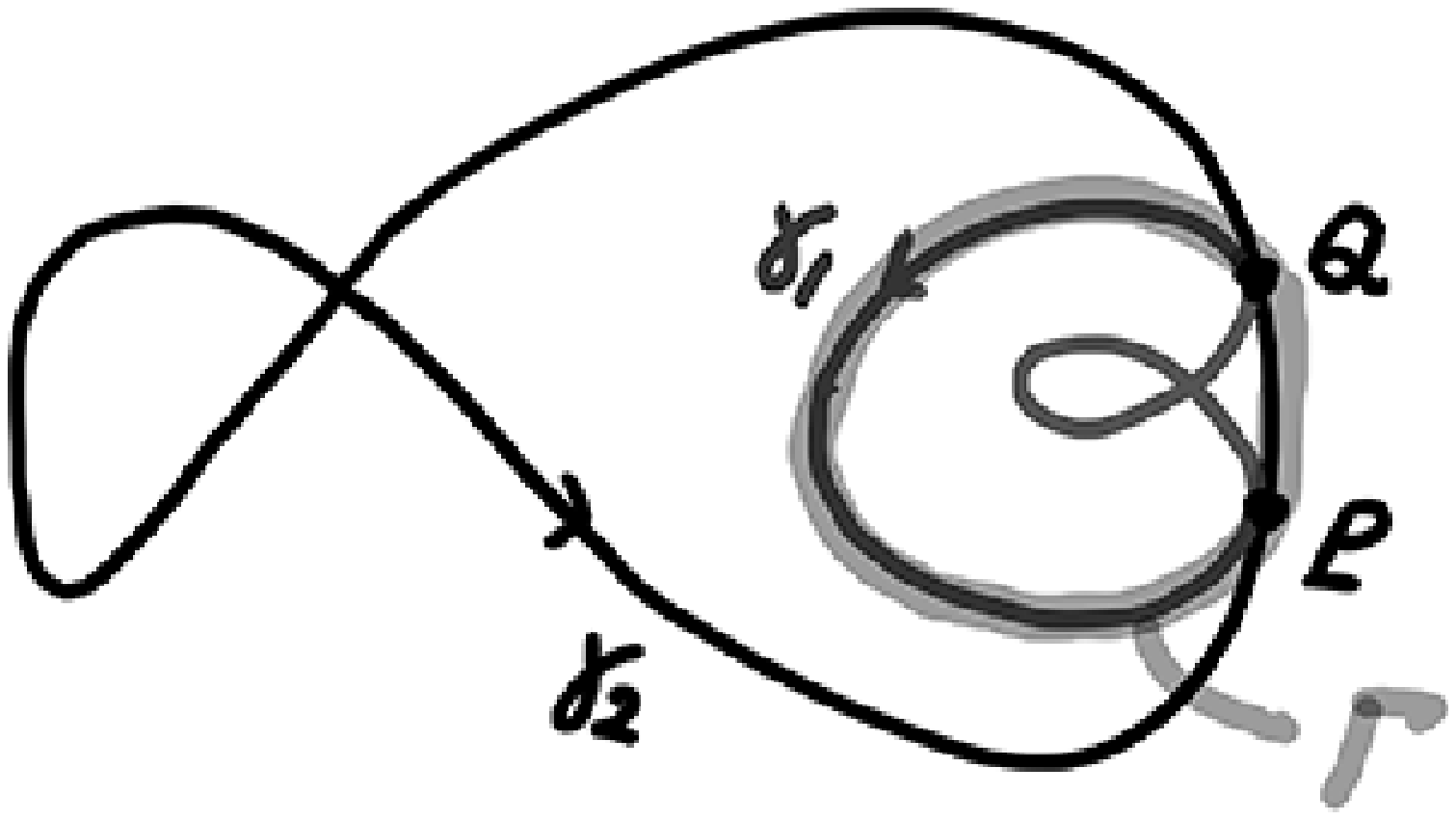}
\caption{Admissible bi-tangents of the first kind (left) and 
the second kind (right), respectively 
}\label{Fig:bi-tangents0}
\end{center}
\end{figure}

This assertion is a special case 
of our main theorem (cf. Theorem B in Section \ref{sec4})
on extrinsic diameters of flat tori,
which is proved by showing
the existence of an admissible 
bi-tangent $(P,Q)$ of the second kind 
(cf. Definition~\ref{def:bi-tg})
between $\gamma_1$ and $\gamma_2$ as in 
Figure~\ref{Fig:bi-tangents0} (right)
under a certain topological assumption. 
We remark that our theorem 
(i.e. Theorem B in Section \ref{sec4})
covers not only the case $\#(\gamma_1)=\#(\gamma_2)=1$
(as in Proposition A),
but also infinitely many topological types of 
$\mu$-admissible pairs.
For example, if $\#(\gamma_1)$ and $\#(\gamma_2)$
are both less than or equal to $3$, then
there are, in total, $10$ topological types 
as possibilities of each $\gamma_i$
(cf.  Figure\ref{Fig:table1} of Appendix B). 
However, there are only two exceptional 
topological types $3_3,3_6$ with 3-crossings.
If $\gamma_1$ and $\gamma_2$
are not of these two types (ignoring
their orientation indicated in Figure \ref{Fig:table1}), 
then the immersed flat torus associated
to the pair $(\gamma_1,\gamma_2)$
has extrinsic diameter $\pi$ (cf. Corollary~\ref{cor:3}).
Such exceptional topological types 
for $\#(\gamma_1),\#(\gamma_2)\le 5$
are also discussed in Section \ref{sec4}.

The paper is organized as follows:
We recall several fundamental properties
of $\mu$-admissible pairs in Section \ref{sec2}.
In Section \ref{sec3}, we prove a key fact
(cf. Proposition  \ref{prop:keyM})
for use in the proof of
our main theorem, and we prove it
in  Section \ref{sec4}. 
At the end of this paper, two appendices are prepared. 
In the first appendix, 
we show that the topological type of 
$\mu$-admissible pairs representing a 
given flat torus is not uniquely determined, in general. 
In the second appendix,
a table (Figure\ref{Fig:table1}) of closed spherical curves within $3$-crossings 
and a table (Figure\ref{Fig:table2}) of closed spherical curves with $5$-crossings
having a special property are given.

\section{$\mu$-admissible pairs of closed curves
}\label{sec2}

\subsection{Kitagawa's representation formula}

Let $\op{SU}(2)$ be the group of all $2 \times 2$ unitary 
matrices with determinant $1$. Its Lie algebra $\su(2)$ 
consists of all $2 \times 2$ skew Hermitian matrices 
of trace zero. 
The adjoint representation of $\op{SU}(2)$ is given by
$
Ad(a)x := axa^{-1}, 
$
where $a \in\op{SU}(2)$ and $x \in \su(2)$.
For $x,y \in \su(2)$, we set 
$$
\braket{x,y} := -\frac12 \op{trace}(xy), \quad
x \times y := \frac12 [x,y], 
$$
where [\ ,\ ] denotes the Lie bracket on $\su(2)$.
Then $\braket{\ ,\ }$  is the canonical $Ad$-invariant 
(positive definite) inner product on $\su(2)$. By setting 
$$
e_1 := \begin{bmatrix}
		0 & \sqrt{-1} \\
		\sqrt{-1} & 0 \\
		\end{bmatrix},\quad
e_2 := \begin{bmatrix}
		0 & -1 \\
		1 & 0 \\
		\end{bmatrix},\quad
e_3 := \begin{bmatrix}
		\sqrt{-1} & 0 \\
		0 & -\sqrt{-1} \\
		\end{bmatrix},
$$
$\{ e_1, e_2, e_3 \}$ gives an orthonormal basis of 
$\su(2)$, and it satisfies the following:
$$
e_1 \times e_2 = e_3, \quad e_2 \times e_3 = e_1, 
\quad e_3 \times e_1 = e_2.
$$
Using this, we can identify $\su(2)$ with 
the Euclidean space $\R^3$.
We endow $\op{SU}(2)$ with a Riemannian metric $\braket{\ ,\ }$ such that 
$\braket{E_i, E_j} = \delta_{ij}$, where $E_i$ denotes the left invariant 
vector field on $\op{SU}(2)$ corresponding to $e_i$. 
Then $\op{SU}(2)$ is isometric to 
the unit 3-sphere $S^3$. Henceforth, we identify 
$S^3$ with $\op{SU}(2)$.

Let $S^2$ be the unit 2-sphere in $\su(2)$ defined by 
$$
S^2 := \{ x \in \su(2) : |x| =1 \}, 
$$
and let $US^2$ be the unit tangent bundle of $S^2$. 
Note that $US^2$ is identified with a subset of 
$\su(2) \times \su(2)$ in the usual way, i.e.,
$$
US^2 := \{(x,y) : |x| = |y| = 1, \braket{x,y} = 0 \},
$$ 
where the canonical projection $p_1 : US^2 \to S^2$ is given by $p_1(x,y) := x$.
Define a map $p_2 : S^3 \to US^2$ by
\begin{equation}\label{eq10:pre}
p_2(a) := (Ad(a)e_3,\ Ad(a)e_1).
\end{equation}
Then the map $p_2$ is a double covering satisfying $p_2(a) = p_2(-a)$.

We consider a regular curve $\gamma:\R\to S^2$.
The {\it geodesic curvature} $\kappa(s)$ of $\gamma$
is defined by
$$
\kappa(s):=\frac{\op{det}(\gamma(s),
\gamma'(s),\gamma''(s))}
{|\gamma'(s)|^3},
$$
where we regard $\gamma(s)\in \R^3$
and $\gamma'(s):=d\gamma/ds$, $\gamma''(s):=d^2\gamma/ds^2$.
The {\it unit normal vector field} of $\gamma(s)$ is given by
$$
n(s) :=\frac{ \g(s)\times \g{'}(s)}{|\g{'}(s)|}.
$$
Then
$$
\kappa(s) = \frac{\braket{\g{''}(s), n(s)}}{|\g{'}(s)|^{2}}
$$
and
\begin{equation}\label{eq15:adm}
n'(s) = -\kappa(s){\g}'(s)
\end{equation}
hold.
We then consider the curve $\hat{\g}(s)$ in $US^2$ defined by
\begin{equation}\label{eq20:pre}
\hat{\g}(s) := \left(\g(s),\ \frac{\g{'}(s)}{|\g{'}(s)|}\right).
\end{equation}
Since $p_2 : S^3 \to US^2$ is a covering, 
there exists a regular curve $c_\gamma(s)$ in $S^3$ such that
\begin{equation}\label{eq22:pre}
p_2\circ c_\gamma(s) = \hat{\g}(s).
\end{equation}
The curve $c_\gamma(s)$ is called a {\it lift} of the curve 
$\hat{\g}(s)$ with respect to the covering $p_2$.
(Such a lift is determined up to a sign, that is,
$-c_\gamma$ is also a lift of $\gamma$.)
It follows that
\begin{equation}\label{eq30:pre}
c_\gamma(s)^{-1}c'_\gamma(s) 
= \frac{|\g{\ '}(s)|}2 \biggl(e_2 + \kappa(s)e_3
\biggr). 
\end{equation}
In particular, $|{\g}'|^2(1 + \kappa^2)$ is
equal to $4$ if and only if $s$ is the arc-length parameter of
$c_\gamma$ with respect to the canonical
Riemannian metric of $S^3$. Regarding this,
we give the following 
definition of \lq admissible pairs\rq.

\begin{definition}[\cite{Ki1}, \cite{KU}] \label{def:PAP}
A pair $(\gamma_1,\gamma_2)$ of regular curves in $S^2$
defined on $\R$ 
is called an {\it admissible pair} 
if it satisfies the following conditions :
\begin{itemize}
\item[(1)]
$|{\g_i}'|^2(1 + \kappa_i^2) = 4 \quad \text{for \ } i=1, 2$,
\item[(2)]
$\kappa_1(s_1) > \kappa_2(s_2) \quad \text{for all \ } s_1, s_2 \in \R$,
\end{itemize}
where $\kappa_i$ denotes the geodesic curvature of $\g_i$. 
\end{definition}

Let $(\g_1, \g_2)$ be an admissible pair, 
and let $c_i : \R \to S^3$ be the lift of the curve $\widehat{\g_i}$ with 
respect to the covering $p_2 : S^3 \to US^2$. Using the group 
structure on $S^3$, we define a map $f_\G : \R^2 \to S^3$ by
\begin{equation}\label{eq3:adm}
f_\G(s_1,s_2) := c_1(0)^{-1}c_1(s_1)c_2(s_2)^{-1}c_2(0).
\end{equation}
Since $p_2:S^3\to US^2$ is a double covering,
there are two possibilities for the lift $c_i$ of $\hat \gamma_i$.
However, the map $f_\G$ is independent of the choices 
of the lifts $c_1$ and $c_2$. 
The first fundamental form $ds^2$ 
and the second fundamental form $h$ of $f_\G$
are given by
\begin{equation}\label{eq:chebyshev}
ds^2 = {ds_1}^2 + 2 \cos \omega {ds_1}{ds_2} + {ds_2}^2,\qquad
h = 2 \sin \omega \,{ds_1}{ds_2},
\end{equation}
respectively, where 
$$
\omega(s_1, s_2):= 
\cot^{-1}\left(\kappa_1(s_1)\right) + \cot^{-1}\left(-\kappa_2(s_2)\right).
$$
Since $f_{\gamma_1,\gamma_2}$ is flat, the Gaussian curvature 
of $f_\G$ vanishes identically.
On the other hand,  the mean curvature function $H_{\G}$ satisfies 
\begin{equation}\label{eq:Mean}
H_{\G}(s_1, s_2) = \frac{1 + \kappa_1(s_1)\kappa_2(s_2)}{\kappa_1(s_1) 
- \kappa_2(s_2)}.
\end{equation}
Using the function
$\alpha_i(s)$ given by 
$
\tan \alpha_i(s) := \kappa_i(s)\,\,(|\alpha_i(s)| < {\pi}/2),
$ 
we obtain
$$
\omega(s_1, s_2) = \pi - \alpha_1(s_1) + \alpha_2(s_2).
$$
The following assertion holds:

\begin{fact}[{\cite[Theorem 4.3]{Ki1}}]\label{fact:converse}
For a complete flat immersion
$f:\R^2\to S^3$ whose mean curvature function is bounded,
there exists an admissible pair $(\gamma_1,\gamma_2)$
such that $f_{\gamma_1,\gamma_2}$ is congruent to $f$ in $S^3$.
\end{fact}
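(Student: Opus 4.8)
The plan is to reconstruct an admissible pair from the data of $f$ by passing to a global system of arc-length asymptotic coordinates. Since $f$ is flat, the Gauss equation in $S^3$ (with $K\equiv 0$) gives that the product of the principal curvatures of $f$ equals $-1$, so the second fundamental form of $f$ is everywhere non-degenerate and indefinite; as the domain $\R^2$ is simply connected, this produces two transverse, globally defined asymptotic line fields. First I would parametrize the integral curves of these fields by the arc-length of the induced metric $ds^2$; in the resulting coordinates $(s_1,s_2)$ the first and second fundamental forms of $f$ take exactly the Chebyshev form of \eqref{eq:chebyshev} for a smooth angle function $\omega(s_1,s_2)\in(0,\pi)$, and each coordinate curve has unit speed. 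Now the hypothesis enters: a direct computation (cf.\ \eqref{eq:Mean}) shows that the mean curvature of $f$ in these coordinates equals $-\cot\omega$, a strictly monotone function of $\omega\in(0,\pi)$, so boundedness of $H_f$ is equivalent to $\omega$ staying in a compact subinterval $[\delta,\pi-\delta]\subset(0,\pi)$. Together with completeness of $ds^2$ (which forces every maximal unit-speed coordinate curve to be defined on all of $\R$), this uniform non-degeneracy of the Chebyshev net lets one show that the reparametrization by $(s_1,s_2)$ is a diffeomorphism from $\R^2$ onto the $\R^2$-domain of $f$, i.e.\ $(s_1,s_2)$ is a global chart.

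Next I would use flatness a second time. In Chebyshev asymptotic coordinates the Gaussian curvature of $ds^2$ equals $-\omega_{s_1 s_2}/\sin\omega$, so $K\equiv 0$ forces $\omega_{s_1 s_2}\equiv 0$; hence $\omega$ separates, $\omega(s_1,s_2)=\pi-\alpha_1(s_1)+\alpha_2(s_2)$ with $|\alpha_i|<\pi/2$, and therefore the geodesic curvature $\kappa_i:=\tan\alpha_i$ of the $i$-th asymptotic family depends only on $s_i$. Let $\gamma_i:\R\to S^2$ be the projection under the Hopf fibration $p=p_1\circ p_2$ of the $i$-th asymptotic curve through a fixed base point; by construction $\gamma_i$ is a regular spherical curve whose geodesic curvature is $\kappa_i$, and since this asymptotic curve was parametrized by arc-length, \eqref{eq30:pre} gives $|\gamma_i'|^2(1+\kappa_i^2)=4$, which is condition (1) of Definition~\ref{def:PAP}. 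To obtain the congruence $f\cong f_{\gamma_1,\gamma_2}$ I would integrate the moving frame of $f$ in $S^3=\op{SU}(2)$: the separation $\omega_{s_1 s_2}=0$ decouples the Gauss--Codazzi (structure) equations of $f$ into an ODE in $s_1$ and an ODE in $s_2$, whose solutions are precisely the lifts $c_1,c_2$ of $\widehat{\gamma_1},\widehat{\gamma_2}$, and the solution assembles into the product \eqref{eq3:adm}; comparing initial data at the base point and invoking the fundamental theorem of surface theory then gives, after an ambient isometry of $S^3$, that $f$ coincides with $f_{\gamma_1,\gamma_2}$. This is Kitagawa's derivation of the representation formula read in reverse.

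Finally, condition (2) of Definition~\ref{def:PAP} is immediate from the first step: since $\omega(s_1,s_2)\in[\delta,\pi-\delta]$ for \emph{every} pair $(s_1,s_2)\in\R^2$, we get $\alpha_1(s_1)-\alpha_2(s_2)\ge\delta>0$ for all $s_1,s_2$, hence $\inf_{s_1}\kappa_1(s_1)>\sup_{s_2}\kappa_2(s_2)$ and in particular $\kappa_1(s_1)>\kappa_2(s_2)$ for all $s_1,s_2$. Thus $(\gamma_1,\gamma_2)$ is an admissible pair with $f_{\gamma_1,\gamma_2}$ congruent to $f$. I expect the main obstacle to be the global-chart claim in the first paragraph: promoting the arc-length asymptotic parametrization from a local diffeomorphism to a diffeomorphism onto all of $\R^2$, i.e.\ ruling out that an asymptotic curve runs off the surface in finite length or that the Chebyshev net self-overlaps in the large. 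The bounded-mean-curvature hypothesis is exactly what makes this work, by confining $\omega$ to a compact subinterval of $(0,\pi)$; once the global net is in hand, the remaining steps are bookkeeping with the structure equations.
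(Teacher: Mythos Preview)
The paper does not give its own proof of this fact; it is stated as a citation of \cite[Theorem 4.3]{Ki1} and used as a black box. So there is nothing in the present paper to compare your argument against.

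That said, your outline is essentially Kitagawa's original proof in \cite{Ki1} read in the natural order: pass to global asymptotic Chebyshev coordinates, use $K=0$ to get $\omega_{s_1s_2}=0$ and hence the separation $\omega=\pi-\alpha_1(s_1)+\alpha_2(s_2)$, project the asymptotic curves by the Hopf fibration to produce $\gamma_1,\gamma_2$, and read off the two admissibility conditions from \eqref{eq30:pre} and the bound on $\omega$. Your identification $H=-\cot\omega$ is correct and matches \eqref{eq:Mean}, and your diagnosis that the bounded-mean-curvature hypothesis is precisely what confines $\omega$ to a compact subinterval of $(0,\pi)$, thereby making the asymptotic net globally nondegenerate, is on the mark. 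The one place where real work is hidden is exactly the step you flag: upgrading the local Chebyshev parametrization to a global diffeomorphism $\R^2\to\R^2$. Completeness plus the uniform lower bound $\sin\omega\ge\sin\delta>0$ on the area element are the right ingredients, but the argument that the net does not self-overlap in the large is not entirely ``bookkeeping''; in \cite{Ki1} this is handled carefully, so if you want a self-contained write-up you should spell out why the coordinate map is proper (hence a covering, hence a diffeomorphism since the target is simply connected).
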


\begin{remark}\label{rmk:asymptotic}
By \eqref{eq:chebyshev}, each $c_i$ ($i=1,2$) is an asymptotic curve
of $f$. Since $p:=p_1\circ p_2:S^3\to S^2$ gives the Hopf fibration,
each $\gamma_i$ can be considered as the projection of an asymptotic
curve of $f$ by $p$. 
\end{remark}

We next give a necessary and sufficient condition
for two given admissible pairs to give the
same immersed flat torus:
Let $(\G)$ and $(\GB)$ be 
admissible pairs.
We denote by $(\G) \equiv (\GB)$ 
the existence 
$\alpha_1, \alpha_2 
\in \op{SO}(3)$ satisfying
$$
\alpha_1(\g_1(s)) = \gb_1(s),\quad
\alpha_2(\g_2(s)) = \gb_2(s).
$$
The following assertion holds:

\begin{proposition}
$(\G) \equiv (\GB)$ if and only if $f_{\G} = f_{\GB}$. 
\end{proposition}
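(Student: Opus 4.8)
The plan is to establish the two implications separately, using two standard facts about the double covering $p_2\colon S^3\to US^2$: first, that the adjoint representation $\op{Ad}\colon\op{SU}(2)\to\op{SO}(3)=\op{SO}(\su(2))$ is surjective, so that every rotation is realized as $\op{Ad}(a)$ for some $a\in\op{SU}(2)$; and second, that $f_{\G}$, as a map $\R^2\to S^3$, is independent of the chosen lifts $c_1,c_2$, which is already recorded after \eqref{eq3:adm}. I read ``$f_{\G}=f_{\GB}$'' as equality of maps on $\R^2$.

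For the ``only if'' direction, suppose $(\G)\equiv(\GB)$ and write $\alpha_i=\op{Ad}(a_i)$ with $a_i\in\op{SU}(2)$. I would first check that $\bar c_i:=a_ic_i$ is a lift of $\widehat{\gb_i}$: since $\op{Ad}(a_ic_i(s))=\op{Ad}(a_i)\op{Ad}(c_i(s))$, formula \eqref{eq10:pre} gives $p_2(a_ic_i(s))=\bigl(\op{Ad}(a_i)\g_i(s),\ \op{Ad}(a_i)(\g_i'(s)/|\g_i'(s)|)\bigr)$, and because $\op{Ad}(a_i)$ is an orientation-preserving isometry of $S^2$ carrying $\g_i$ to $\gb_i$, the right-hand side is exactly $\widehat{\gb_i}(s)$ by \eqref{eq20:pre}. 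Substituting $\bar c_i=a_ic_i$ into \eqref{eq3:adm} for $f_{\GB}$, the constant factors $a_1,a_2$ cancel in adjacent pairs in $\bar c_1(0)^{-1}\bar c_1(s_1)\bar c_2(s_2)^{-1}\bar c_2(0)$, so $f_{\GB}=f_{\G}$.

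For the ``if'' direction, the key observation is that one recovers the asymptotic lifts by restricting $f_{\G}$ to the two coordinate axes. From \eqref{eq3:adm},
\[
f_{\G}(s_1,0)=c_1(0)^{-1}c_1(s_1),\qquad f_{\G}(0,s_2)=c_2(s_2)^{-1}c_2(0).
\]
If $f_{\G}=f_{\GB}$, then comparing values along $\{s_2=0\}$ forces $c_1(0)^{-1}c_1(s_1)=\bar c_1(0)^{-1}\bar c_1(s_1)$ for all $s_1$, hence $\bar c_1=a_1c_1$ with the fixed element $a_1:=\bar c_1(0)c_1(0)^{-1}\in\op{SU}(2)$; comparing along $\{s_1=0\}$ similarly gives $\bar c_2=a_2c_2$ with $a_2:=\bar c_2(0)c_2(0)^{-1}$. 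Applying $p_2$ to $\bar c_i=a_ic_i$ and reading off the first component exactly as in the previous paragraph yields $\gb_i=\op{Ad}(a_i)(\g_i)$, so $(\G)\equiv(\GB)$ with $\alpha_i=\op{Ad}(a_i)\in\op{SO}(3)$.

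There is no deep obstacle here; the only point needing a word of care is the sign ambiguity of the lifts under the two-to-one map $p_2$. In the first direction it is harmless because $f_{\G}$ does not depend on the lift; in the second direction the identity $\bar c_i=a_ic_i$ is forced on the nose by the functional equation, and in any case $\op{Ad}(-a_i)=\op{Ad}(a_i)$, so the ambiguity disappears on passing to $\op{SO}(3)$. Admissibility of the pairs enters only insofar as it is needed for $f_{\G}$ and $f_{\GB}$ to be well-defined maps independent of the lifts.
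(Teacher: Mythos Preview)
Your proof is correct and follows essentially the same approach as the paper: in both directions you use surjectivity of $\op{Ad}\colon\op{SU}(2)\to\op{SO}(3)$, verify that left-translating a lift $c_i$ by $a_i$ produces a lift of $\widehat{\gb_i}$, and recover the rotations in the converse by restricting $f_{\G}=f_{\GB}$ to the coordinate axes to force $\bar c_i=a_ic_i$. Your added remark on the sign ambiguity is a helpful clarification but not a departure from the paper's argument.
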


\begin{proof}
Suppose that $(\G) \equiv (\GB)$. 
Since the map $Ad :S^3(=\op{SU}(2)) \to \op{SO}(3)$ is surjective, 
there exist $g_1,g_2 \in S^3$ such that 
$Ad(g_i)\g_i(s) = \gb_i(s)$ ($i=1,2$).
Let $c_i(s)$ be a lift of $\widehat{\g_i}(s)$ with respect to 
the covering $p_2 : S^3 \to US^2$, and let $\cb_i(s)$ 
be a curve in $S^3$ defined by $\cb_i(s) := g_ic_i(s)$. Then we have
$$
\begin{aligned}
Ad(\cb_i(s))e_3 &= Ad(g_i)Ad(c_i(s))e_3 = Ad(g_i)\g_i(s) = \gb_i(s),\\
Ad(\cb_i(s))e_1 &= Ad(g_i)Ad(c_i(s))e_1 = \frac{Ad(g_i)\g_i{'}(s)}{|\g_i{'}(s)|}
= \frac{\gb_i{'}(s)}{|\gb_i{'}(s)|}.
\end{aligned}
$$
This shows that $p_2\circ \cb_i = \widehat{\gb_i}$. So we obtain 
$$
f_{\GB}(s_1,s_2) = \cb_1(0)^{-1}\cb_1(s_1)\cb_2(s_2)^{-1}\cb_2(0) 
= c_1(0)^{-1}c_1(s_1)c_2(s_2)^{-1}c_2(0) = f_{\G}(s_1, s_2).
$$

Conversely, we suppose that $f_{\G} = f_{\GB}$. Let $c_i$ (resp. $\cb_i$) 
be a curve in $S^3$ such that 
$$
p_2\circ c_i = \widehat{\g_i} \qquad
(\mbox{resp.}\quad  p_2\circ \cb_i = \widehat{\gb_i}).
$$ 
Since $f_{\G}(s,0) = f_{\GB}(s,0)$ and $f_{\G}(0,s) = f_{\GB}(0,s)$, we obtain
$\cb_i(s) = g_ic_i(s)$ ($i=1,2$), where $g_i = \cb_i(0)c_i(0)^{-1}$. This shows that
$$
\gb_i(s) = Ad(\cb_i(s))e_3 = Ad(g_i)Ad(c_i(s))e_3 = Ad(g_i)\g_i(s)
\quad (i=1,2).
$$      
Hence $(\G) \equiv (\GB)$ holds.
\end{proof}

We prove the following:

\begin{proposition}\label{prop:T10}
Let $(\g_1,\ \g_2)$ be an  admissible pair, and let  
\begin{equation}\label{eq:g1g2}
\gb_1(s):= -\g_2(s),\quad \gb_2(s):= -\g_1(s).
\end{equation}
Then the pair $(\gb_1,\ \gb_2)$ is an
admissible pair such that 
\begin{equation}\label{eq:k-pm}
\bar{\kappa}_1(s) = -\kappa_2(s),\quad \bar{\kappa}_2(s) 
= -\kappa_1(s),
\end{equation}
and there exists $g \in S^3$ 
such that $f_{\GB}(s_1,s_2) 
= g^{-1}\left(f_{\G}(s_2,s_1)\right)^{-1}g$.
In particular,  $f_{\GB}(s_1,s_2)$ is
congruent to $f_{\G}(s_2,s_1)$ in $S^3$.  
\end{proposition}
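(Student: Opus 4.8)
The plan is to settle the curvature identity \eqref{eq:k-pm} and the admissibility of $(\gb_1,\gb_2)$ by direct computation, and then to extract the relation between $f_{\GB}$ and $f_{\G}$ by choosing the lifts of $\widehat{\gb_1},\widehat{\gb_2}$ so cleverly that the defining formula \eqref{eq3:adm} telescopes. First I would note that, for any regular curve $\delta$ in $S^2$, the curve $-\delta$ has geodesic curvature equal to $-($that of $\delta)$: this is immediate from $\kappa=\det(\delta,\delta',\delta'')/|\delta'|^{3}$, since replacing $\delta$ by $-\delta$ multiplies the numerator by $(-1)^{3}$ and leaves the denominator unchanged. Applying this to $\gb_1=-\g_2$ and $\gb_2=-\g_1$ gives \eqref{eq:k-pm}. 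Condition~(1) of Definition~\ref{def:PAP} for $(\gb_1,\gb_2)$ is then clear, since $|\gb_1'|^{2}(1+\bar\kappa_1^{2})=|\g_2'|^{2}(1+\kappa_2^{2})=4$ and likewise for the index $2$; condition~(2) holds because $\bar\kappa_1(s_1)=-\kappa_2(s_1)>-\kappa_1(s_2)=\bar\kappa_2(s_2)$ is precisely condition~(2) for $(\g_1,\g_2)$. So $(\gb_1,\gb_2)$ is an admissible pair.

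The heart of the matter is the choice of lifts. Let $c_i$ be a lift of $\widehat{\g_i}$, so that $Ad(c_i(s))e_3=\g_i(s)$ and $Ad(c_i(s))e_1=\g_i'(s)/|\g_i'(s)|$. Since $e_2^{2}=-I$ and $e_2$ anticommutes with $e_1$ and $e_3$, one gets $Ad(e_2)e_3=-e_3$ and $Ad(e_2)e_1=-e_1$, i.e. $Ad(e_2)$ is the rotation by $\pi$ about the $e_2$-axis. Hence the curves $\cb_1:=c_2\,e_2$ and $\cb_2:=c_1\,e_2$ in $S^3$ satisfy $Ad(\cb_1(s))e_3=Ad(c_2(s))(-e_3)=-\g_2(s)=\gb_1(s)$ and $Ad(\cb_1(s))e_1=Ad(c_2(s))(-e_1)=-\g_2'(s)/|\g_2'(s)|=\gb_1'(s)/|\gb_1'(s)|$, with the analogous identities for $\cb_2$; thus $\cb_1$ and $\cb_2$ are lifts of $\widehat{\gb_1}$ and $\widehat{\gb_2}$. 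I expect this to be the only genuine obstacle: the antipodal map $x\mapsto -x$ of $S^2$ is not induced by any element of $\op{SO}(3)$, so one cannot realize $\gb_i$ by translating $\g_i$; instead the sign must be absorbed into the lift by multiplying on the \emph{right} by the \emph{constant} element $e_2$, which simultaneously negates the ``position'' frame vector $e_3$ and the ``velocity'' frame vector $e_1$.

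After that the remainder is bookkeeping. Since the value of $f_{\GB}$ in \eqref{eq3:adm} is independent of the choice of lifts, we may compute it using $\cb_1,\cb_2$:
$$
f_{\GB}(s_1,s_2)=(c_2(0)e_2)^{-1}(c_2(s_1)e_2)(c_1(s_2)e_2)^{-1}(c_1(0)e_2)
=e_2^{-1}\,c_2(0)^{-1}c_2(s_1)c_1(s_2)^{-1}c_1(0)\,e_2,
$$
the interior factors $e_2\cdot e_2^{-1}$ cancelling. On the other hand $f_{\G}(s_2,s_1)=c_1(0)^{-1}c_1(s_2)c_2(s_1)^{-1}c_2(0)$, whence $\bigl(f_{\G}(s_2,s_1)\bigr)^{-1}=c_2(0)^{-1}c_2(s_1)c_1(s_2)^{-1}c_1(0)$, so that with $g:=e_2\in S^3$ we obtain
$$
f_{\GB}(s_1,s_2)=g^{-1}\bigl(f_{\G}(s_2,s_1)\bigr)^{-1}g.
$$
Finally, as the metric of $S^3=\op{SU}(2)$ is bi-invariant, conjugation by $g$ and the inversion $x\mapsto x^{-1}$ are both isometries of $S^3$; composing these with the coordinate interchange $(s_1,s_2)\mapsto(s_2,s_1)$ shows that $f_{\GB}$ is congruent to $f_{\G}$, which completes the proof.
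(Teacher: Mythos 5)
Your proof is correct and follows essentially the same route as the paper: establish \eqref{eq:k-pm} by a short curvature computation, verify admissibility of $(\gb_1,\gb_2)$, build lifts of $\widehat{\gb_i}$ by right-multiplying $c_1, c_2$ (with indices swapped) by a fixed $g\in S^3$ with $Ad(g)e_1=-e_1$, $Ad(g)e_2=e_2$, $Ad(g)e_3=-e_3$, and then let the $g$'s telescope in \eqref{eq3:adm}. The only cosmetic differences are that you compute $\bar\kappa_i$ from the determinant formula rather than via the normal field $n_i$, and that you produce the explicit element $g=e_2$ where the paper merely asserts that some such $g$ exists.
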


\begin{proof}
Since ${\gb_1}'(s) = -{\g_2}'(s)$, 
we obtain $\bar{n}_1(s) = n_2(s)$.
Hence
$$
\bar{\kappa}_1(s) = 
\frac{\braket{{\gb_1}''(s),\  \bar{n}_1(s)}}{|{\gb_1}'(s)|^2} 
= \frac{\braket{-{\g_2}''(s),\ n_2(s)}}{|{\g_2}'(s)|^2} = -\kappa_2(s).
$$
By the same way, we obtain $\bar{\kappa}_2(s) = -\kappa_1(s)$.
Hence, $\bar{\kappa}_1(s_1) > \bar{\kappa}_2(s_2)$, 
and so $(\GB)$ is an admissible pair.
Let $c_i$ be a curve in $S^3$ 
such that $p_2\circ c_i=\widehat{\g_i}$ ($i=1,2$). 
We choose $g \in S^3(=\op{SU}(2))$ such that
$$
Ad(g)e_1 = - e_1,\quad
Ad(g)e_2 = e_2,\quad
Ad(g)e_3 = - e_3,
$$
and set $\bar{c}_1(s) := c_2(s)g,\ \bar{c}_2(s) := c_1(s)g$.
Then we have
\begin{align*}
Ad(\bar{c}_1(s))e_3 &= -Ad(c_2(s))e_3 = -\g_2(s) = \gb_1(s),\\
Ad(\bar{c}_1(s))e_1 &= -Ad(c_2(s))e_1 
= -\frac{{\g_2}'(s)}{|{\g_2}'(s)|} 
= \frac{{\gb}'_1(s)}{|{\gb}'_1(s)|}.
\end{align*}
This shows that $p_2\circ \bar{c}_1(s) = \widehat{\gb_1}(s)$.
By the same way, we obtain $p_2\circ \bar{c}_2(s) = \widehat{\gb_2}(s)$.
Hence
$$
\begin{aligned}
f_{\GB}(s_1, s_2) &= \cb_1(0)^{-1}\cb_1(s_1)\cb_2(s_2)^{-1}\cb_2(0) = 
g^{-1}c_2(0)^{-1}c_2(s_1)c_1(s_2)^{-1}c_1(0)g\\ 
&= g^{-1}\left(c_1(0)^{-1}c_1(s_2)c_2(s_1)^{-1}c_2(0)\right)^{-1}g 
= g^{-1}\left(f_{\G}(s_2, s_1)\right)^{-1}g.
\end{aligned}
$$
This completes the proof.
\end{proof}

\subsection{Periodic admissible pairs
and $\mu$-admissible pairs}

Let $\gamma:\R/l\Z\to S^2$ be a closed regular
curve (cf. \eqref{eq:01}).
In this paper, a {closed interval} $[a,b](\subset \R)$ means 
a proper sub-interval of $S^1:=\R/l\Z$ 
satisfying $|a-b|<l$.

\begin{definition}
An admissible pair 
$(\gamma_1,\gamma_2)$
is called {\it periodic} if 
$\gamma_1$ and $\gamma_2$ 
are both periodic, that is,
there exist two positive numbers $l_i$ ($i=1,2$)
such that $\gamma_i$ has period $l_i$.
\end{definition}

\begin{remark}
We dropped the condition (1) of Definition \ref{def:PAP}
to define periodic admissible 
pairs in the previous paper \cite{KU}.
So the above definition is stronger than
that given in \cite{KU}.
\end{remark}

\begin{proposition}[\cite{Ki1}]
The flat immersion $f_{\G}:\R^2\to S^3$
induced by $(\G)$ gives a flat torus
if and only if 
$(\gamma_1,\gamma_2)$
is a periodic admissible pair.
\end{proposition}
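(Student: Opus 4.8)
The plan is to prove the two implications separately, treating ``periodic $\Rightarrow$ flat torus'' as essentially formal and putting the real work into the converse. For ``periodic $\Rightarrow$ flat torus'': if $(\gamma_1,\gamma_2)$ is periodic with $\gamma_i$ of period $l_i$, then $\hat\gamma_i$ has period $l_i$, so $p_2\bigl(c_i(s+l_i)\bigr)=p_2\bigl(c_i(s)\bigr)$; since $p_2$ is a double covering with deck transformation $a\mapsto -a$, the continuous map $s\mapsto c_i(s+l_i)c_i(s)^{-1}$ takes values in the central subgroup $\{\pm I\}$ and is therefore constant, so $c_i(s+l_i)=\epsilon_ic_i(s)$ with $\epsilon_i\in\{\pm1\}$ and $c_i(s+2l_i)=c_i(s)$. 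By \eqref{eq3:adm} this gives $f_\G(s_1+2l_1,s_2)=f_\G(s_1,s_2)=f_\G(s_1,s_2+2l_2)$, so $f_\G$ descends to a map of the torus $T^2:=\R^2/(2l_1\Z\times2l_2\Z)$ into $S^3$; it is an immersion because $f_\G$ is — the first fundamental form in \eqref{eq:chebyshev} is positive definite, since $\omega=\pi-\alpha_1+\alpha_2\in(0,2\pi)$ and $\omega\neq\pi$ (as $\kappa_1\neq\kappa_2$ by Definition~\ref{def:PAP}), so $|\cos\omega|<1$ — and flat because $f_\G$ is.

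For the converse, assume $f:=f_\G$ descends to an immersion of a torus, i.e.\ the period lattice $\Lambda:=\{(a,b)\in\R^2: f(\cdot+a,\cdot+b)\equiv f\}$ is a rank-two lattice. The starting observation is that for $(a,b)\in\Lambda$, cancelling the constant factors in \eqref{eq3:adm} gives $c_1(s_1)^{-1}c_1(s_1+a)=c_2(s_2)^{-1}c_2(s_2+b)$ for all $s_1,s_2$; since the left side depends only on $s_1$ and the right only on $s_2$, both equal a constant $g_{(a,b)}\in S^3$ and
\[
c_1(s+a)=c_1(s)\,g_{(a,b)},\qquad c_2(s+b)=c_2(s)\,g_{(a,b)}\qquad(s\in\R),
\]
with $(a,b)\mapsto g_{(a,b)}$ a homomorphism $\Lambda\to S^3$ whose image is abelian. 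Taking $b=0$ forces $g_{(a,0)}=I$, so $(a,0)\in\Lambda$ with $a\neq0$ gives $c_1(s+a)=c_1(s)$ and hence $\gamma_1$ has period $a$; conversely, if $\gamma_1$ has period $l_1$, the sign argument above gives $c_1(s+2l_1)=c_1(s)$, so $(2l_1,0)\in\Lambda$. Thus \emph{$\gamma_1$ is periodic if and only if $\Lambda$ contains a nonzero horizontal vector}, and similarly $\gamma_2$ is periodic if and only if $\Lambda$ contains a nonzero vertical vector. So it suffices to show that a rank-two $\Lambda$ contains both.

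Next I would run a dichotomy. If $\Lambda$ has no nonzero horizontal vector, then $(a,b)\mapsto b$ is injective on $\Lambda$, its image $\Lambda_2\subset\R$ is a subgroup isomorphic to $\Z^2$, hence non-cyclic, hence dense; setting $g(b):=g_{(a(b),b)}$ (with $(a(b),b)$ the unique such element) we get $c_2(s+b)=c_2(s)g(b)$ for $b\in\Lambda_2$, and using the abelian image together with the density, $c_2(s)g(b)c_2(s)^{-1}$ is independent of $s$, so $c_2(s+b)=\tilde G(b)c_2(s)$ where $\tilde G$ extends, by continuity of $c_2$, to a one-parameter subgroup $\tilde G:\R\to S^3$ with $c_2(s)=\tilde G(s)c_2(0)$. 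Then $c_2(s)^{-1}c_2'(s)$ is constant, hence $\kappa_2$ is constant by \eqref{eq30:pre}, $\gamma_2$ is a round circle, $c_2$ is periodic, and $\Lambda$ \emph{does} contain a nonzero vertical vector. By the symmetric argument, if $\Lambda$ has no nonzero vertical vector it contains a nonzero horizontal one; so $\Lambda$ always contains at least one of the two.

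It remains to rule out the mixed case — say $\Lambda$ contains a nonzero horizontal vector but no nonzero vertical one (the other is symmetric). Applying the previous step with the two factors exchanged, $\gamma_1$ is a round circle, so $c_1(s)=c_1(0)\exp(sv_1)$ with $v_1=\tfrac{|\gamma_1'|}{2}(e_2+\kappa_1e_3)$ of unit length and $\kappa_1$ constant; then the horizontal part of $\Lambda$ is exactly $2\pi\Z\times\{0\}$ and $\Lambda=\Z(2\pi,0)+\Z(a',b)$ with $b>0$ and $a'/2\pi\notin\mathbb{Q}$. From $c_1(s+a')=c_1(s)g_{(a',b)}$ one gets $g_{(a',b)}=\exp(a'v_1)=:g$, and iterating $c_2(s+b)=c_2(s)g$ gives $c_2(s+nb)=c_2(s)g^n$; differentiating and applying \eqref{eq30:pre} on both sides yields
\[
Ad(g^{-n})\bigl(e_2+\kappa_2(s)e_3\bigr)\in\R e_2+\R e_3\qquad(s\in\R,\ n\in\Z).
\]
But $Ad(g^{-n})$ is the rotation of $\su(2)\cong\R^3$ about the axis $v_1$ through angle $-2na'$, and a rotation about an axis lying in a plane preserves that plane only for angles $0$ and $\pi$; since $a'/2\pi\notin\mathbb{Q}$, $Ad(g)$ does not preserve $\R e_2+\R e_3$, so the relation (for $n=1$) forces $e_2+\kappa_2(s)e_3$ into $(\R e_2+\R e_3)\cap Ad(g)(\R e_2+\R e_3)=\R v_1$, i.e.\ $\kappa_2(s)\equiv\kappa_1$ — contradicting Definition~\ref{def:PAP}. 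Hence the mixed case cannot occur, $\Lambda$ contains a nonzero horizontal and a nonzero vertical vector, and $\gamma_1,\gamma_2$ are both periodic. I expect the main obstacle to be precisely this last step: the forward implication and the passage to a one-parameter subgroup are routine, but excluding a ``mixed irrational'' period lattice genuinely uses that each $c_i$ is a bona fide $p_2$-lift, so that $c_i^{-1}c_i'$ has no $e_1$-component.
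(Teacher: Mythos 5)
Your forward direction matches the paper's, but for the hard direction (flat torus $\Rightarrow$ periodic) you give a genuinely different and self-contained argument: the paper simply cites Kitagawa's Theorem A from \cite{Ki1}, which asserts that the asymptotic curves of any immersed flat torus in $S^3$ are closed, and then observes that $c_1,c_2$ are asymptotic curves and $\gamma_i$ is periodic precisely when $c_i$ is. You instead analyze the period lattice $\Lambda$ of $f_\G$ directly, exploit the separated-variable form of \eqref{eq3:adm} to get the holonomy homomorphism $g_{(a,b)}$, and then run a dichotomy: either $\Lambda$ already contains a nonzero horizontal and a nonzero vertical vector, or you force $\kappa_2\equiv\kappa_1$ (violating Definition~\ref{def:PAP}) via the fact from \eqref{eq30:pre} that $c_i^{-1}c_i'$ has no $e_1$-component. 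That last observation is the real geometric input and is a nice, elementary replacement for the cited theorem. What you gain is that the proposition becomes self-contained within the representation formula framework; what the paper gains is brevity and directness by importing the stronger, already-proved statement that applies to \emph{all} flat immersed tori.

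One point in your dichotomy step deserves cleaner phrasing. You write that ``using the abelian image together with the density, $c_2(s)g(b)c_2(s)^{-1}$ is independent of $s$''; the abelianness of the image is not actually what makes this work. The correct (and correct-in-spirit, since you do invoke continuity) argument is: for $b\in\Lambda_2$ one has $g(b)=c_2(s)^{-1}c_2(s+b)$ for \emph{every} $s$; choosing $b_n\in\Lambda_2$ with $b_n\to\beta$ and using continuity of $c_2$ gives $c_2(0)^{-1}c_2(\beta)=\lim g(b_n)=c_2(s)^{-1}c_2(s+\beta)$ for all $s$, so the quantity $G(\beta):=c_2(0)^{-1}c_2(\beta)$ satisfies $c_2(s+\beta)=c_2(s)G(\beta)$ for all $s,\beta$, is a continuous homomorphism, and hence equals $\exp(\beta w)$ for a fixed $w\in\su(2)$; constancy of $c_2^{-1}c_2'=w$ and \eqref{eq30:pre} then force $\kappa_2$ constant. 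This is exactly the conclusion you want, so the proof stands, but I would drop the appeal to the abelian image, which is a red herring, and make the density-plus-continuity step explicit as above.
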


\begin{proof}
Suppose that $f_\G$ gives a torus.
Kitagawa \cite[Theorem A]{Ki1}
showed that asymptotic 
curves on an immersed 
flat torus are all closed curves.
Since $c_1,c_2$ induced by
$\gamma_1,\gamma_2$ are
asymptotic curves of $f_{\gamma_1,\gamma_2}$
by \eqref{eq:chebyshev},
we have that $c_1,c_2$ are closed, 
and so $\gamma_1, \gamma_2$
are periodic by \eqref{eq22:pre}.
Conversely, if $\gamma_1, \gamma_2$
are periodic, 
\eqref{eq20:pre} and
\eqref{eq22:pre} yield that
$c_i(s+l_i)=\pm c_i(s)$ for $i=1,2 $.
In particular, we have 
$c_i(s+2l_i)=c_i(s)$
($i=1,2$), and
then $f_\G$ is doubly periodic
by \eqref{eq3:adm},
proving the assertion.
\end{proof}

If $\gamma_i$ ($i=1,2$) are periodic,
then 
$\min_{s\in \R} \kappa_i(s)$
and $\max_{s\in \R} \kappa_i(s)$
exist. So (2) of
Definition~\ref{def:PAP} yields the 
following:

\begin{proposition}
If $(\gamma_1,\gamma_2)$
is a periodic admissible pair,
then there exists $\mu\in \R$
such that
\begin{equation}\label{eq:k1k200}
\min_{s\in \R} \kappa_1(s) > \mu >  
\max_{s\in \R} \kappa_2(s).
\end{equation}
\end{proposition}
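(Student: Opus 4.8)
The plan is to combine the defining inequality~(2) of an admissible pair with the compactness of a fundamental period. First I would observe that, since $\gamma_1$ and $\gamma_2$ are smooth regular curves, their geodesic curvature functions $\kappa_1,\kappa_2:\R\to\R$ are continuous; and since the pair is periodic, $\kappa_i$ inherits the period $l_i$ of $\gamma_i$, so $\kappa_i$ is determined by its restriction to the compact interval $[0,l_i]$. By the extreme value theorem the quantities
\[
m_1:=\min_{s\in\R}\kappa_1(s),\qquad M_2:=\max_{s\in\R}\kappa_2(s)
\]
therefore exist and are attained, say $m_1=\kappa_1(a)$ and $M_2=\kappa_2(b)$ for suitable $a,b\in\R$ (this is precisely the remark made just before the statement).

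Next I would apply condition~(2) of Definition~\ref{def:PAP} to the single pair of parameters $s_1=a$, $s_2=b$, which yields
\[
m_1=\kappa_1(a)>\kappa_2(b)=M_2 .
\]
Finally, setting $\mu:=(m_1+M_2)/2$ — or indeed taking any real number strictly between $M_2$ and $m_1$ — gives $\min_{s\in\R}\kappa_1(s)=m_1>\mu>M_2=\max_{s\in\R}\kappa_2(s)$, which is the asserted inequality~\eqref{eq:k1k200}. There is essentially no obstacle here: the only step that is not purely formal is upgrading the pointwise inequality of Definition~\ref{def:PAP}(2) to an inequality between the two extrema, and periodicity makes this immediate by guaranteeing that both extrema are realized.
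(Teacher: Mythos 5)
Your proof is correct and follows exactly the paper's own (implicitly sketched) argument: periodicity plus continuity yield attained extrema via compactness, and then condition~(2) of Definition~\ref{def:PAP} applied at the minimizer of $\kappa_1$ and the maximizer of $\kappa_2$ gives $\min\kappa_1>\max\kappa_2$, so any $\mu$ strictly between the two works.
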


The following assertion holds: 

\begin{proposition}
In the setting as in 
\eqref{eq:k1k200},
we may assume $\mu>0$.
\end{proposition}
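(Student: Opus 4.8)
The plan is to dispose of the case $\mu>0$ at once and, when $\mu\le 0$, to pass to the admissible pair $(\GB)$ of Proposition~\ref{prop:T10}, whose geodesic curvatures are the negatives of those of $(\G)$ with the roles of the two curves interchanged. So assume $\mu\le 0$ and put $\gb_1:=-\g_2$, $\gb_2:=-\g_1$ as in \eqref{eq:g1g2}. First I would record, from Proposition~\ref{prop:T10}, that $(\GB)$ is again an admissible pair, that it is periodic (since $\gb_1$ has the period of $\g_2$ and $\gb_2$ has the period of $\g_1$), and that $f_{\GB}(s_1,s_2)$ is congruent in $S^3$ to $f_{\G}(s_2,s_1)$; in particular $f_{\GB}$ and $f_{\G}$ have the same extrinsic diameter, so replacing $(\G)$ by $(\GB)$ is harmless for the rest of the paper.

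Next, by \eqref{eq:k-pm} one has $\bar\kappa_1=-\kappa_2$ and $\bar\kappa_2=-\kappa_1$, hence
\[
\min_{s\in\R}\bar\kappa_1(s)=-\max_{s\in\R}\kappa_2(s),\qquad
\max_{s\in\R}\bar\kappa_2(s)=-\min_{s\in\R}\kappa_1(s).
\]
Since $\max_{s\in\R}\kappa_2(s)<\mu\le 0$, the first identity gives $\min_{s\in\R}\bar\kappa_1(s)=-\max_{s\in\R}\kappa_2(s)>-\mu\ge 0$, while admissibility of $(\GB)$ gives $\min_{s\in\R}\bar\kappa_1(s)>\max_{s\in\R}\bar\kappa_2(s)$. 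Therefore the open interval $\bigl(\max\{0,\max_{s\in\R}\bar\kappa_2(s)\},\ \min_{s\in\R}\bar\kappa_1(s)\bigr)$ is non-empty, and any $\bar\mu$ chosen in it is strictly positive and satisfies $\min_{s\in\R}\bar\kappa_1(s)>\bar\mu>\max_{s\in\R}\bar\kappa_2(s)$; that is, \eqref{eq:k1k200} holds for $(\GB)$ with this positive $\bar\mu$. Thus, after the above replacement, we may assume $\mu>0$.

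The only point requiring a little care is the borderline value $\mu=0$, where the naive choice $\bar\mu=-\mu=0$ is not positive; what rescues the argument is that the strict inequality $\max_{s\in\R}\kappa_2(s)<\mu$ still forces $\min_{s\in\R}\bar\kappa_1(s)=-\max_{s\in\R}\kappa_2(s)>0$, leaving room to pick a positive $\bar\mu$. Everything else is a routine tracking of signs through \eqref{eq:k-pm} and Proposition~\ref{prop:T10}.
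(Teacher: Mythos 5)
Your proof is correct and uses essentially the same idea as the paper: reduce to the case $\mu>0$ via the replacement $(\G)\mapsto(\GB)$ of Proposition~\ref{prop:T10} together with the sign relations \eqref{eq:k-pm}. The only (cosmetic) difference is that the paper splits the argument into two cases, handling $\mu=0$ by simply shrinking $\mu$ into $(0,\min\kappa_1)$ without replacing the pair and reserving the replacement for $\mu<0$ (where it sets $\bar\mu=-\mu$), whereas you apply the replacement uniformly for all $\mu\le 0$ and then argue that the interval $\bigl(\max\{0,\max\bar\kappa_2\},\min\bar\kappa_1\bigr)$ is nonempty; both are valid and equally routine.
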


\begin{proof}
Let $(\gamma_1,\gamma_2)$ be a 
periodic admissible pair. 
If $\mu=0$, then
we may adjust $\mu$ to be positive
satisfying 
$
\min_{s\in \R} \kappa_1(s)
> \mu >0.
$
If $\mu<0$, then
the $\mu$-admissible pair $(\gb_1,\gb_2)$ given by
\eqref{eq:g1g2} has the property 
(cf. Proposition \ref{prop:T10})
$$
\min_{s\in \R} \bar \kappa_1(s)
> \bar \mu >  \max_{s\in \R} \bar \kappa_2(s),
$$
where $\bar \mu:=-\mu>0$.
By replacing $(\gamma_1,\gamma_2)$
by $(\bar \gamma_1,\bar \gamma_2)$,
we get the conclusion.
\end{proof}

If $(\gamma_1,\gamma_2)$ is a periodic
admissible pair, then its parameter $s$
must satisfy (1) of Definition~\ref{def:PAP}.
To remove this, we prepare the following terminology:

\begin{definition}
A pair of closed regular curves $(\gamma_1,\gamma_2)$
is said to be {\it $\mu$-admissible} if
it satisfies \eqref{eq:k1k200}
for positive $\mu$.
\end{definition}

By definition, a periodic admissible pair 
satisfying 
\eqref{eq:k1k200} with $\mu>0$
is 
$\mu$-admissible. Conversely 
the following assertion holds:

\begin{proposition}
Let $(\gamma_1,\gamma_2)$ be a
$\mu$-admissible pair. Then after 
suitable changes
of parameters of $\gamma_i$ ($i=1,2$),
$(\gamma_1,\gamma_2)$ gives a
periodic admissible pair.
\end{proposition}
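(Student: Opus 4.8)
The plan is to arrange, for each $i=1,2$ separately, an \emph{orientation-preserving} reparametrization of $\gamma_i$ so that condition (1) of Definition~\ref{def:PAP} holds, while observing that condition (2) is automatic. Indeed, the geodesic curvature is a geometric invariant, so an orientation-preserving change of parameter leaves the range of $\kappa_i$ unchanged; hence \eqref{eq:k1k200} already forces $\kappa_1(s_1) > \kappa_2(s_2)$ for all $s_1,s_2$, which is exactly (2) of Definition~\ref{def:PAP}. Thus the whole statement reduces to the following one-curve problem: given a closed regular curve $\gamma:\R/l\Z\to S^2$ with geodesic curvature $\kappa$, find an orientation-preserving reparametrization making $|\gamma'|^2(1+\kappa^2)\equiv 4$.

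To do this, set
$$
g(s) := \frac{2}{|\gamma'(s)|\,\sqrt{1+\kappa(s)^2}},
$$
which is a smooth, strictly positive, $l$-periodic function on $\R$. First I would introduce the smooth strictly increasing function
$$
t(s) := \int_0^s \frac{du}{g(u)} = \int_0^s \frac{|\gamma'(u)|\,\sqrt{1+\kappa(u)^2}}{2}\,du,
$$
equivalently the global solution of the ODE $ds/dt = g(s)$. Since $1/g$ is positive and $l$-periodic, $t(s)$ is a bijection of $\R$ onto $\R$ with $t(s+l)=t(s)+\tilde l$, where $\tilde l := \int_0^l (1/g)\,du>0$; by \eqref{eq30:pre} this $\tilde l$ is precisely the length of one period of the lift $c_\gamma$ in the canonical metric of $S^3$. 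Let $s(t)$ denote the inverse of $t(s)$ and put $\tilde\gamma(t) := \gamma(s(t))$.

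It then remains to verify the required properties. The curve $\tilde\gamma$ is closed of period $\tilde l$, since $s(t+\tilde l)=s(t)+l$ gives $\tilde\gamma(t+\tilde l)=\gamma(s(t)+l)=\tilde\gamma(t)$. As the reparametrization $s(t)$ is orientation-preserving, the geodesic curvature of $\tilde\gamma$ is $\tilde\kappa(t)=\kappa(s(t))$, and $|\tilde\gamma'(t)| = |\gamma'(s(t))|\,s'(t) = |\gamma'(s(t))|\,g(s(t))$; hence
$$
|\tilde\gamma'(t)|^2\bigl(1+\tilde\kappa(t)^2\bigr)
= |\gamma'(s(t))|^2\,g(s(t))^2\,\bigl(1+\kappa(s(t))^2\bigr) = 4,
$$
which is (1) of Definition~\ref{def:PAP}. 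Applying this to $\gamma_1$ and $\gamma_2$ separately yields reparametrized closed curves $\tilde\gamma_1,\tilde\gamma_2$ satisfying (1), and (2) holds by the remark above, so $(\tilde\gamma_1,\tilde\gamma_2)$ is a periodic admissible pair obtained from $(\gamma_1,\gamma_2)$ by changes of parameters.

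I expect no serious obstacle here. The global solvability of the defining ODE is immediate from compactness of $\R/l\Z$ and positivity of $g$, and periodicity of the new parameter follows from periodicity of $1/g$. The one point requiring care is to keep the reparametrizations orientation-preserving, so that the geodesic curvatures — and hence the inequalities in \eqref{eq:k1k200} — are preserved rather than reversed in sign; an orientation-reversing change would replace $(\kappa_1,\kappa_2)$ by $(-\kappa_1,-\kappa_2)$ and would demand $\min_s\kappa_2(s) > \max_s\kappa_1(s)$, contradicting admissibility.
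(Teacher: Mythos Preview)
Your proposal is correct and follows essentially the same approach as the paper: both reparametrize each $\gamma_i$ via the new parameter $t\mapsto \tfrac12\int_0^t |\gamma_i'(u)|\sqrt{1+\kappa_i(u)^2}\,du$, which is exactly your $t(s)$. You supply more detail than the paper (the explicit verification of condition~(1), the periodicity of the new parameter, and the remark that orientation-preserving reparametrization preserves the geodesic curvature so that condition~(2) carries over), but the underlying argument is identical.
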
 

\begin{proof}
Suppose that the parameter $t$ of $\gamma_i(t)$ ($i=1,2$)
is general, perhaps not satisfying (1) of 
Definition \ref{def:PAP}.
Since $\gamma_i$ is periodic, there exists $d_i(>0)$
such that
$$
\gamma_i(t+d_i)=\gamma_i(t)\qquad (t\in \R).
$$
We set
$$
s_i(t):=\frac12\int_0^t \left|\frac{{\g_i}(u)}{du}\right|
\sqrt{1 + \kappa_i(u)^2} \,du
\qquad (i=1, 2).
$$
Since $s_i:\R\to \R$
is a bijection satisfying $ds_i/dt>0$,
we can choose $s_i$ as a  new parameter of $\gamma_i$.
Then $|\gamma'_i(s_i)|\sqrt{1 + \kappa_i(s_i)^2}=2$ and
 $\gamma_i(s_i+l_i)=\gamma(s_i)$ hold,
where $l_i:=s_i(d_i)$ ($i=1,2$).
\end{proof}

\subsection{The regular homotopical invariant $I$ 
of closed spherical curves}

For a closed regular curve $\gamma(t)$ in $S^2$
of period $l$, we have shown the existence of a 
new parameter $s$ so that 
$$
\gamma(s+l)=\gamma(s)\qquad (s\in \R),\qquad
|\gamma'(s)|\sqrt{1 + \kappa(s)^2}=2,
$$
as seen above.
Then we set
\begin{equation}
I(\gamma):=\begin{cases}
0 & \mbox{if $c_\gamma(s+l)=c_\gamma(s)$}, \\
1 & \mbox{if $c_\gamma(s+l)=-c_\gamma(s)$}. 
\end{cases}
\end{equation}

\begin{definition}\label{def:generic}
A closed regular curve
$\gamma$ is said to be {\it generic} if all of
its self-intersections are
transversal double points.
Moreover, a $\mu$-admissible pair $(\gamma_1,\gamma_2)$
is said to be {\it generic} if each $\gamma_i$ ($i=1,2$) is 
generic.
\end{definition}

If $\gamma$ is generic, the number of crossings
(i.e. self-intersections) is finite, and we denote it by
$\#(\gamma)$. Then it is 
well-known that 
\begin{equation}
I(\gamma)=\begin{cases}
0 & \mbox{if $\#(\gamma)$ is odd}, \\
1 & \mbox{if $\#(\gamma)$ is even}. 
\end{cases}
\end{equation}
Suppose that $(\gamma_1,\gamma_2)$
is a periodic admissible pair, and
each $\gamma_i$ has the period $l_i(>0)$,
and $I(\gamma_1)=1$.
Then
\begin{align*}
f_{\G}(s_1+l_1,s_2)
&=c_1(0)^{-1}c_1(s_1+l_1)c_2(s_2)^{-1}c_2(0)^{-1}\\
&=c_1(0)^{-1}(-c_1(s_1))c_2(s_2)^{-1}c_2(0)^{-1}
=-f_{\G}(s_1,s_2).
\end{align*}
Similarly, $I(\gamma_2)=1$ also implies that
$
f_{\G}(s_1,s_2+l_2)
=-f_{\G}(s_1,s_2).
$
Consequently, we get the following:

\begin{fact}\label{fact:I1}
The extrinsic diameter of 
$f_{\gamma_1,\gamma_2}$ is equal to $\pi$ if $I(\gamma_1)=1$ 
or $I(\gamma_2)=1$.
\end{fact}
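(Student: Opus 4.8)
The plan is to combine a universal upper bound with an antipodality observation. Since the canonical round metric on the unit $3$-sphere has diameter $\pi$, the extrinsic diameter of $f_\G(\R^2)$ --- that is, the supremum of the $S^3$-distances between its points --- is automatically at most $\pi$, and it equals $\pi$ precisely when $f_\G$ assumes a pair of antipodal values in $S^3$. So it suffices to produce such a pair.

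Suppose $I(\gamma_1)=1$, i.e. a lift $c_1$ of $\widehat{\gamma_1}$ with respect to $p_2$ satisfies $c_1(s+l_1)=-c_1(s)$. Substituting this into \eqref{eq3:adm} and using that $-1$ is central in $\op{SU}(2)$, one gets
\[
f_\G(s_1+l_1,s_2)=c_1(0)^{-1}c_1(s_1+l_1)c_2(s_2)^{-1}c_2(0)=-\,f_\G(s_1,s_2)
\]
for every $(s_1,s_2)\in\R^2$, which is precisely the computation displayed just above. Fixing any $(s_1,s_2)$, the points $f_\G(s_1,s_2)$ and $f_\G(s_1+l_1,s_2)$ are antipodal in $S^3$, hence at $S^3$-distance $\pi$; combined with the upper bound, the extrinsic diameter is forced to be $\pi$. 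The case $I(\gamma_2)=1$ is handled identically, using $c_2(s+l_2)=-c_2(s)$ and the resulting identity $f_\G(s_1,s_2+l_2)=-f_\G(s_1,s_2)$.

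There is essentially no obstacle here: beyond the group-theoretic computation already in hand, the only ingredient is the elementary fact that antipodal points of the unit $S^3$ realize the maximal distance $\pi$. (For bookkeeping one may also note that $c_i(s+2l_i)=c_i(s)$ makes $f_\G$ descend to the flat torus, but this plays no role in evaluating the diameter of the image $f_\G(\R^2)$.)
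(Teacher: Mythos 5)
Your argument is exactly the paper's: the displayed computation immediately preceding Fact~\ref{fact:I1} establishes $f_{\G}(s_1+l_1,s_2)=-f_{\G}(s_1,s_2)$ (and analogously for $\gamma_2$), and the conclusion follows because antipodal points of $S^3$ realize the maximal distance $\pi$. Correct and identical in substance to the paper's proof.
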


Thus, to prove Proposition A, we may assume that 
$I(\gamma_1)=I(\gamma_2)=0$.

\begin{figure}[h]
\begin{center}
        \includegraphics[height=2.3cm]{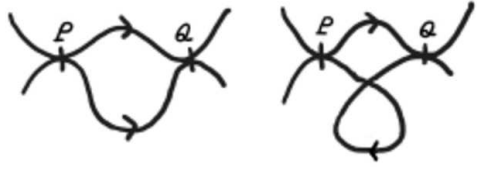}
\caption{Admissible bi-tangents of the first kind (left)
and second kind (right)}\label{Fig:bi-tangents}
\end{center}
\end{figure}

Let $\gamma_i:\R/l_i\Z\to S^2$ ($i=1,2$)
be two closed regular curves of period $l_i(>0)$
in $S^2$ with
unit normal vector fields $n_i$ along $\gamma_i$.
Let
$a_i, b_i$ 
$(0<a_i < b_i<l_i)$ 
be real numbers satisfying 
\begin{equation}\label{eq:AB}
(P:=)\gamma_1(a_1)=\gamma_2(a_2),\quad
(Q:=)\gamma_1(b_1)=\gamma_2(b_2).
\end{equation}
If it holds that
\begin{equation}\label{eq:plus}
n_1(a_1)=n_2(a_2),\quad
n_1(b_1)=n_2(b_2)
\end{equation}
or 
\begin{equation}\label{eq:minus}
n_1(a_1)=n_2(a_2),\quad
n_1(b_1)=-n_2(b_2),
\end{equation}
then the pair of points $(P,Q)$ on $S^2$ is called
a {\it bi-tangent} between $\gamma_1$ and $\gamma_2$.
Moreover, if the former case (i.e.~\eqref{eq:plus})
happens, then the bi-tangent is said to be
 {\it admissible}.

\begin{definition}\label{def:bi-tg}
We set
$
\check \gamma_i:=(\gamma_i, n_i):\R\to US^2 \,\, (i=1,2).
$
An admissible  bi-tangent $(P,Q)$ is called of {\it  the first kind}
if the restriction $\check \gamma_i|_{[a_1,b_1]}$ 
on the interval $[a_1,b_1]$
is regular homotopic to
$\check\gamma_2|_{[a_2,b_2]}$ 
keeping the properties \eqref{eq:AB}
and \eqref{eq:plus}.
If $(P,Q)$ is not of the first kind, it is called  of {\it the
second kind} (see Figure~\ref{Fig:bi-tangents} right).
\end{definition}

The following assertion was proved in \cite[Proposition 2.3]{KU}:
 
\begin{fact}\label{prop:2red}
Let $(\gamma_1,\gamma_2)$ be a 
$\mu$-admissible pair.
Then the following two assertions are equivalent.
\begin{enumerate}
\item The flat immersed torus $f_{\gamma_1,\gamma_2}$
associated to $(\gamma_1,\gamma_2)$
has extrinsic diameter $\pi$.
\item 
There exists an orientation preserving isometry 
$\phi$ on $S^2$ such that a subarc of $\gamma_1$ 
and a subarc of $\phi\circ \gamma_2$
have an admissible bi-tangent of the second kind.
\end{enumerate}
\end{fact}

We have the following reduction for the problem given
 in the introduction.

\begin{proposition}\label{prop:generic2}
The problem in the introduction is solved affirmatively if
there exists an admissible bi-tangent of the second kind 
for each generic $\mu$-admissible pair. 
\end{proposition}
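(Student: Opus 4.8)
The plan is to deduce the statement from the generic case by a $C^\infty$-approximation, combined with Fact~\ref{prop:2red} and a compactness argument for the extrinsic diameter. First I would observe that, by Fact~\ref{fact:I1}, the assertion already holds whenever $I(\gamma_1)=1$ or $I(\gamma_2)=1$, so it suffices to treat a $\mu$-admissible pair $(\gamma_1,\gamma_2)$ with $I(\gamma_1)=I(\gamma_2)=0$; such a pair may be regarded as a periodic admissible pair after a suitable change of parameters (see Section~\ref{sec2}).

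Next I would perturb $(\gamma_1,\gamma_2)$ to a generic pair. The inequality \eqref{eq:k1k200} is an open condition on $(\gamma_1,\gamma_2)$ in the $C^2$-topology, so every sufficiently small $C^\infty$-perturbation $(\gamma_1^\epsilon,\gamma_2^\epsilon)$ through closed regular curves remains $\mu$-admissible. Since closed regular curves in $S^2$ with only transversal double points form a dense subset of all closed regular curves in the $C^\infty$-topology, one can arrange in addition that each $\gamma_i^\epsilon$ is generic (Definition~\ref{def:generic}) and that $\gamma_i^\epsilon\to\gamma_i$ in $C^\infty$ as $\epsilon\to0$. After the reparametrization restoring condition (1) of Definition~\ref{def:PAP} — which depends smoothly on the curve — the lifts $c_i^\epsilon$ and hence the immersions $f_{\gamma_1^\epsilon,\gamma_2^\epsilon}\colon\R^2\to S^3$ defined by \eqref{eq3:adm} converge to $f_{\gamma_1,\gamma_2}$ uniformly on compact subsets of $\R^2$; the perturbed pairs are again periodic admissible pairs, so each $f_{\gamma_1^\epsilon,\gamma_2^\epsilon}$ induces a flat torus.

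Now I would apply the hypothesis: each generic $\mu$-admissible pair $(\gamma_1^\epsilon,\gamma_2^\epsilon)$ admits an admissible bi-tangent of the second kind, so by Fact~\ref{prop:2red} (with $\phi=\mathrm{id}$) the flat torus $f_{\gamma_1^\epsilon,\gamma_2^\epsilon}$ has extrinsic diameter $\pi$. Since $S^3$ has diameter $\pi$ and the torus is compact, this means that the image of $f_{\gamma_1^\epsilon,\gamma_2^\epsilon}$ contains a pair of antipodal points of $S^3$; choosing preimages $p_\epsilon,q_\epsilon$ in a fundamental domain whose diameter stays bounded as $\epsilon\to0$ and passing to a subsequence with $p_\epsilon\to p_\infty$ and $q_\epsilon\to q_\infty$, the uniform convergence gives $f_{\gamma_1,\gamma_2}(p_\infty)=-f_{\gamma_1,\gamma_2}(q_\infty)$. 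Hence the image of $f_{\gamma_1,\gamma_2}$ contains an antipodal pair, so its extrinsic diameter equals $\pi$, which is the assertion of the proposition.

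The conceptual steps — the reduction by Fact~\ref{fact:I1}, the use of Fact~\ref{prop:2red}, and the limiting argument for the diameter — are short; the part requiring care is the perturbation in the second paragraph, namely checking that a $\mu$-admissible pair can be $C^\infty$-approximated by \emph{generic} $\mu$-admissible pairs while keeping track of the closedness of the curves, the invariant $I$, and the effect of the arc-length reparametrization on the periods $l_i$, and then confirming that the induced immersions converge. No single point here is deep, but this bookkeeping is where the technical work lies.
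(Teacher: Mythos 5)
Your proposal is correct and takes essentially the same approach as the paper: approximate the given $\mu$-admissible pair by generic ones, invoke the hypothesis together with Fact~\ref{prop:2red} for each approximant, and pass to the limit using continuity of $f_{\gamma_1,\gamma_2}$ in $(\gamma_1,\gamma_2)$. The paper compresses this into two sentences; your write-up supplies the bookkeeping (density of generic curves, stability of the $\mu$-admissibility inequality, convergence of the lifts, and the antipodal-pair compactness argument) that the paper leaves implicit.
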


\begin{proof}
This assertion follows from the fact that 
a non-generic $\mu$-admissible pair
can be obtained by the limit of a sequence of generic pairs.
So we obtain the conclusion regarding the 
fact that $f_{\gamma_1,\gamma_2}$
depends on $\gamma_1,\gamma_2$
continuously.
\end{proof}

\subsection{A tool to control the behavior of $\gamma_1$}

\begin{figure}[htb]
\begin{center}
        \includegraphics[height=4.5cm]{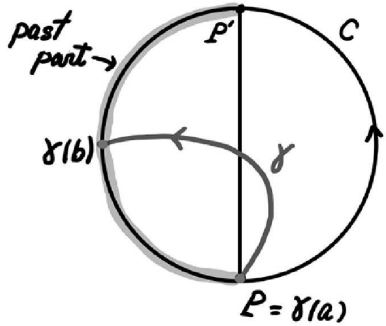}
\caption{Behavior of $\gamma$ starting from $C$ into the 
interior domain}
\label{Fig:ku-lemma}
\end{center}
\end{figure}

Let $C$ be an oriented circle in $S^2$ of positive geodesic curvature. 
Then $C$ separates $S^2$ into 
two open regions. The smaller one is called the {\it interior domain}
and denote it by $\Delta_C$. 
Let $O$ be the center of $C$ in the interior domain of $C$ and 
$P$ a point on $C$. Then the great circle passing through $P$ 
and $O$ meets $C$ at the antipodal point $P'$
with respect to the circle $C$.
Let $[P,P']$ (resp. $[P',P]$) be the subarc of $C$ from $P$ to $P'$
(resp. $P'$ to $P$). Then $C$ is a union of two arcs $[P,P']$ and $[P',P]$.
We call $[P,P']$ (resp. $[P',P]$) the {\it future part}
(resp. the {\it past part}) of the oriented circle $C$ 
with respect to P. The following assertion was proved in
\cite{KU}, which is a useful tool for investigating the behavior of 
the curve $\gamma$ whose geodesic curvature is greater than $\mu(>0)$.

\begin{fact}[{\cite[Lemma 4.5]{KU}}]\label{fact:KU1}
Let $C$ be an oriented circle of positive constant geodesic 
curvature $\mu(> 0)$ centered at a point $O$ in $S^2$. 
Let $\gamma:[a,b]\to S^2$  be a simple regular arc
(i.e. an arc without self-intersections) 
whose geodesic
curvature is greater than $\mu$ for all $t\in [a,b]$. 
Suppose that $\gamma((a,b))$ lies in $\Delta_C$,
and the two endpoints $\gamma(a),\,\, \gamma(b)$ both lie 
on the circle $C$. Then $\gamma(b)$ lies in the past part of 
$C$ with respect to $\gamma(a)$
$($see Fig \ref{Fig:ku-lemma}$)$.
\end{fact}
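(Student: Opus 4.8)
The plan is to analyze the behavior of $\gamma$ relative to $C$ by comparing it to the circle of the same curvature $\mu$, using the fact that having geodesic curvature strictly greater than $\mu$ forces $\gamma$ to "turn faster" than $C$. The key geometric idea is that as we travel along $\gamma$ from $\gamma(a)$, the curve bends more sharply than any circle of curvature $\mu$, so it cannot wrap around as far as the future part of $C$ before returning to $C$.

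First I would reduce to a local comparison. At $\gamma(a)$, let $C'$ be the oriented circle of geodesic curvature $\mu$ tangent to $\gamma$ at $\gamma(a)$, with the same orientation, lying so that its interior domain is on the same side as the turning direction of $\gamma$. Since $\kappa_\gamma(a) > \mu$, the curve $\gamma$ initially enters $\Delta_{C'}$ and stays inside $C'$ for all $t \in (a, b]$ — this is a standard Sturm-type comparison: if $\gamma$ exited $\Delta_{C'}$, at the first exit point the two curves would be tangent with $\gamma$ on the outside, contradicting $\kappa_\gamma > \mu = \kappa_{C'}$ there. The main work is to set this up carefully using the geodesic polar coordinates centered at the center $O'$ of $C'$, where one can write an ODE for the "radial" coordinate of $\gamma$ and show it stays below the constant value corresponding to $C'$.

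Next, I would use the hypothesis that $\gamma(a)$ and $\gamma(b)$ both lie on $C$, while $\gamma((a,b)) \subset \Delta_C$, together with the containment $\gamma((a,b]) \subset \Delta_{C'}$. The circle $C'$ is tangent to $\gamma$ at $\gamma(a) \in C$, so $C'$ and $C$ are two circles of equal curvature through $\gamma(a)$; either they coincide or they are tangent there. If $\kappa_\gamma(a) > \mu$ the circle $C'$ lies strictly inside $\Delta_C$ near $\gamma(a)$ except at the tangency point. Since $\gamma$ is trapped inside the small disk $\Delta_{C'}$ and must return to $C$, the return point $\gamma(b)$ must be at the unique point where $C'$ meets $C$, namely $\gamma(a)$ itself in the limiting configuration — more precisely, I would track the angular coordinate $\theta(t)$ of $\gamma$ around $O$ (the center of $C$) and show that the "faster turning" of $\gamma$ forces $\theta(b) - \theta(a)$ to have absolute value at most $\pi$ with the correct sign, placing $\gamma(b)$ in the past part of $C$ with respect to $\gamma(a)$. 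Concretely, one compares the total turning: along $C$ from $\gamma(a)$ to the antipodal point $P'$, the angular sweep is $\pi$; since $\gamma$ is a simple arc inside $\Delta_C$ that bends away from $C$ more strongly than $C$ does, its angular advance is strictly less than $\pi$ before it hits $C$ again.

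I expect the main obstacle to be making the "past part" conclusion rigorous rather than merely plausible. The containment in $\Delta_{C'}$ is a routine comparison, but converting it into the precise statement about which arc of $C$ contains $\gamma(b)$ requires careful bookkeeping with orientations and with the angular coordinate around $O$: one must rule out the possibility that $\gamma$ swings around and re-approaches $C$ from a direction that would land it in the future part. The cleanest route is probably to invoke the already-established containment $\gamma((a,b]) \subset \Delta_{C'} \subsetneq \Delta_C$ and observe that the closure $\overline{\Delta_{C'}}$ meets $C$ only in a subarc of the past part of $C$ with respect to $\gamma(a)$ — this last claim about the position of the osculating-$\mu$-circle is where the geometry of $S^2$ (as opposed to the plane) must be handled with some care, using that $\mu > 0$ so both circles are small circles, and exploiting the tangency at $\gamma(a)$ together with the orientation convention fixing the interior domains on the correct sides.
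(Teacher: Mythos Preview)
The paper does not prove this statement; it is simply cited as \cite[Lemma 4.5]{KU} and used as a black box. So there is no in-paper argument to compare against, and I can only evaluate your proposal on its own merits.

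Your overall strategy --- trap $\gamma$ inside the $\mu$-disk $\overline{\Delta_{C'}}$ tangent to $\gamma$ at $\gamma(a)$, then locate $\overline{\Delta_{C'}}\cap C$ inside the past part --- is a reasonable line of attack, and your final geometric claim (that $\overline{\Delta_{C'}}\cap C$ is an arc of $C$ contained in the past part) is in fact correct. But several of the intermediate assertions are wrong, and the one you flag as ``routine'' is not.

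First, the sentence ``$C'$ and $C$ are two circles of equal curvature through $\gamma(a)$; either they coincide or they are tangent there'' is false. Nothing in the hypotheses forces $\gamma$ to be tangent to $C$ at $\gamma(a)$; generically $\gamma$ enters $\Delta_C$ transversally, and then $C'$ (which is tangent to $\gamma$, not to $C$) meets $C$ transversally at $\gamma(a)$ and at exactly one further point. For the same reason the claimed inclusion $\Delta_{C'}\subsetneq \Delta_C$ is false: the two $\mu$-disks overlap in a lens, and part of $\Delta_{C'}$ lies outside $\Delta_C$.

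Second, your comparison argument that $\gamma$ stays in $\overline{\Delta_{C'}}$ is incomplete. You argue that a first exit would have to be a tangential contact, which is then ruled out by $\kappa_\gamma>\mu$. But a first exit can perfectly well be a \emph{transversal} crossing of $C'$: $\gamma$ comes up from inside and passes through $C'$ with $\gamma'$ pointing outward. Nothing in a local second-order comparison forbids this. Ruling it out requires a genuinely global argument --- essentially a version of the very fact you are trying to prove, now with $C'$ in place of $C$ --- so the reasoning as written is circular. (Incidentally, in the degenerate case where $\gamma'(a)$ is tangent to $C$ in the positive direction, one has $C'=C$, and then $\overline{\Delta_{C'}}\cap C=C$ gives no information at all; that boundary case needs a separate treatment.)

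What actually survives is the statement you correctly singled out as the crux: the arc $\overline{\Delta_{C'}}\cap C$ lies in the past part. Proving this directly (say via the explicit location of the second intersection point of the two $\mu$-circles, using that $\gamma'(a)$ points into $\Delta_C$) is the real content, and it does not depend on the two false claims above. But you would still need an independent, non-circular argument for the containment $\gamma((a,b])\subset\overline{\Delta_{C'}}$ before that observation finishes the proof.
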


\begin{figure}[h]
\begin{center}
        \includegraphics[height=2.3cm]{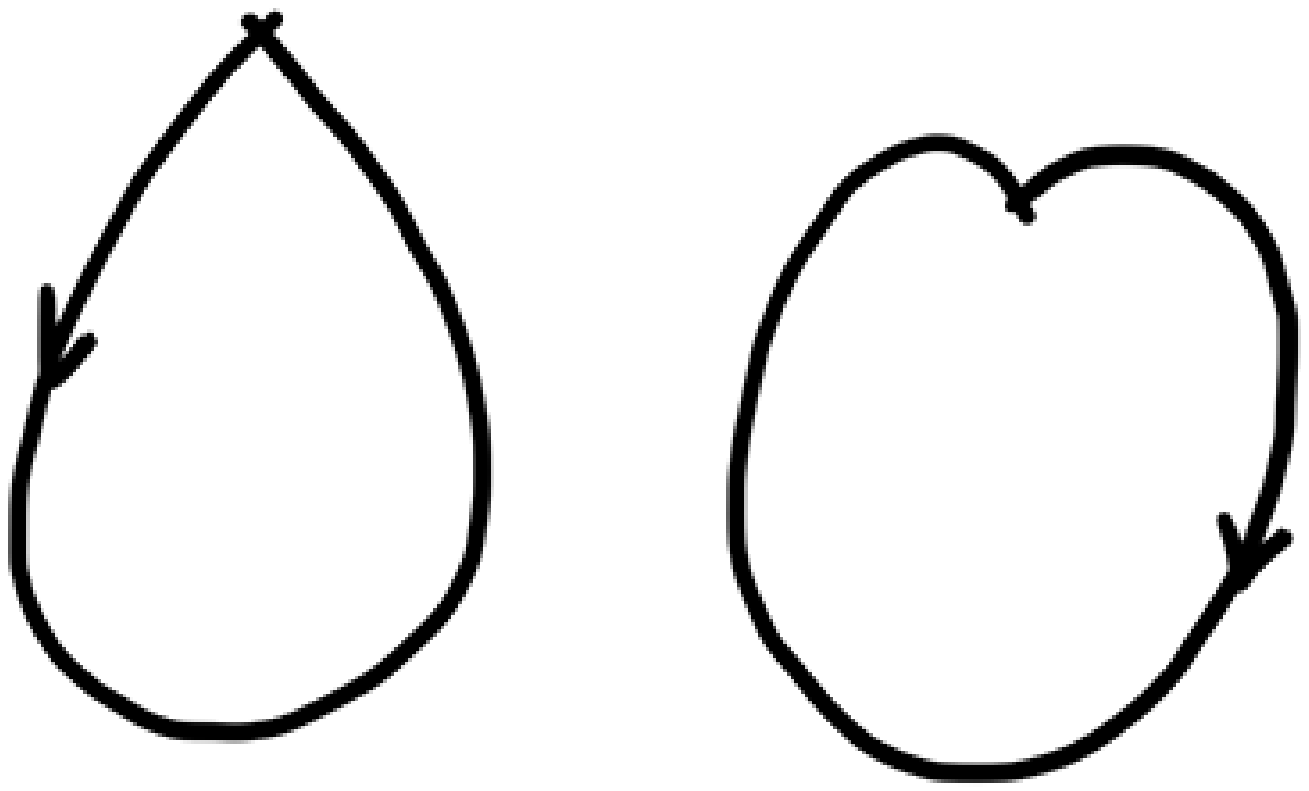}\quad \qquad
        \includegraphics[height=2.0cm]{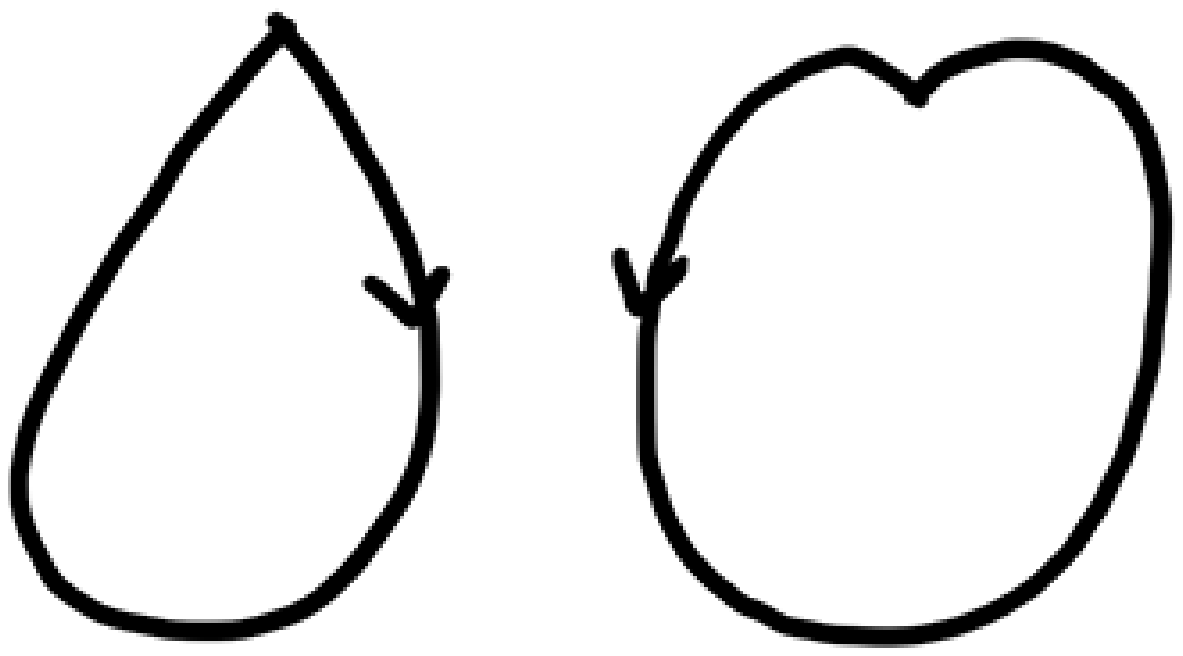}
\caption{Two positive shells (left)
and two negative shells (right)}\label{Fig:shells}
\end{center}
\end{figure}

\section{Bi-tangent $\mu$-circles inscribed in a shell}
\label{sec3}

We give here the concept of \lq shell\rq,
which is used throughout this paper:
 
\begin{definition}
A regular arc $\gamma:[a,b]\to S^2$ is 
called a {\it shell} if 
\begin{itemize}
\item $\gamma([a,b))$ 
has no self-intersection,
\item $\gamma(a)=\gamma(b)$, and 
\item  $\gamma'(a)$ and $\gamma'(b)$ are linearly independent.
\end{itemize}
The point $\gamma(a)=\gamma(b)$ is called the {\it node} 
of the shell.
Moreover, the shell is called {\it positive}
(resp. {\it negative}) if the determinant function satisfies
$$
\det(\gamma(a),\gamma'(a),\gamma'(b))<0
\qquad (\mbox{resp.} \,\, \det(\gamma(a),\gamma'(a),\gamma'(b))>0).
$$
\end{definition}

Since the image of the shell $\gamma$ is a simple closed curve,
it separates $S^2$ into  two open regions. The domain whose interior 
angle at the node is less than $\pi$ 
is called the {\it interior domain} of the shell
and is denoted it by $\Delta_\gamma$. 
Then the  shell $\gamma:[a,b]\to S^2$ 
is  positive (resp. negative), 
if $\Delta_\gamma$ lies in the left-hand side 
(resp. the right-hand side) of $\gamma$.

The classical four vertex theorem assets that
there are four vertex (i.e.  four critical points 
of the curvature function)
on a given  simple closed curve in $\R^2$.
By the stereographic projection,
circles in $S^2$ are corresponding to
circles or lines in $\R^2$.
In particular, the critical points 
of the geodesic curvature function  
(resp. the osculating circles)
of a given spherical regular curve
corresponds to the vertices (resp. the osculating circles)
 of the corresponding plane curve.
An {\it osculating circle} at a point on a curve
is a circle which has second order 
contact with the curve at the point.
As a refinement of the four vertex theorem, 
Kneser \cite{Kn} proved that there are 
two distinct inscribed osculating
circles and two distinct circumscribed osculating
circles of a simple closed spherical curve. 
As an analog of this, the following fact is 
known (cf. \cite[Proposition B.1]{KU}):

\begin{fact}[Strong version of Jackson's lemma]
\label{prop:pnshell}
Suppose that a closed regular spherical
curve  $\gamma$ has a shell on a closed
interval $[a,b](\subset S^1)$.
Then there exists $c\in (a,b)$
such that the osculating circle $C$ of $\gamma$
at $\gamma(c)$ lies in the closure of the
interior domain of the shell $\gamma([a,b])$.
\end{fact}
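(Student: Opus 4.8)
The plan is to deduce the statement from Kneser's theorem, quoted just above, on the two inscribed osculating circles of a smooth simple closed spherical curve. After reversing the orientation of $\gamma$ if necessary, I assume that the shell $\gamma|_{[a,b]}$ is positive; write $\Delta$ for its interior domain and $N:=\gamma(a)=\gamma(b)$ for the node, whose interior angle $\theta$ satisfies $0<\theta<\pi$.

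First I would smooth the node: modify $\gamma$ only on a small neighbourhood of $N$, replacing the corner by a short smooth arc contained in $\overline{\Delta}$ (thereby cutting off a small neighbourhood of the tip of the corner), so as to obtain a smooth simple closed curve $\tilde\gamma$ that agrees with $\gamma$ off that neighbourhood. The inserted arc can be chosen so that its geodesic curvature forms a single \lq bump\rq{} (strictly increasing, then strictly decreasing), hence has exactly one critical point; and, since a neighbourhood of the corner has been removed from $\Delta$, the interior domain $\tilde\Delta$ of $\tilde\gamma$ satisfies $\tilde\Delta\subseteq\Delta$, so $\overline{\tilde\Delta}\subseteq\overline{\Delta}$.

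Next I would apply Kneser's theorem to $\tilde\gamma$: it provides two distinct inscribed osculating circles, which one may take to be osculating circles of $\tilde\gamma$ at points where its geodesic curvature attains a local maximum. Since the modified part of $\tilde\gamma$ contains only one critical point of its geodesic curvature, at most one of the two inscribed circles can be tangent to $\tilde\gamma$ there; hence at least one of them, say $C$, is the osculating circle of $\tilde\gamma$ at a point $\gamma(c)$ lying on the unmodified part, i.e. with $c\in(a,b)$. As $\tilde\gamma$ coincides with $\gamma$ near $\gamma(c)$, the circle $C$ is also the osculating circle of $\gamma$ at $\gamma(c)$, and being inscribed in $\tilde\gamma$ it satisfies $C\subseteq\overline{\tilde\Delta}\subseteq\overline{\Delta}$, which is the desired conclusion.

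I expect the delicate points to be in the Kneser step: one must justify that Kneser's inscribed osculating circles can be located at local maxima of the geodesic curvature, and that the smoothing can be carried out so that the modified arc contributes at most one such circle while $\tilde\gamma$ stays simple with $\overline{\tilde\Delta}\subseteq\overline{\Delta}$. A more hands-on alternative avoids Kneser: if the geodesic curvature of $\gamma$ attains its maximum over $[a,b]$ at an interior point $c$, a barrier/comparison argument shows that $\gamma([a,b])$ cannot enter the open osculating disk $D(c)$; since $c$ is a local maximum of the curvature, $D(c)$ lies locally on the $\Delta$-side of $\gamma$ at $\gamma(c)$, whence $D(c)\subseteq\Delta$ and the osculating circle at $\gamma(c)$ lies in $\overline{\Delta}$. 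The case in which the maximum is attained only at the node would then be reduced to this one by flattening $\gamma$ near $N$ so as to move the maximum to a fixed interior point and then passing to the limit. Either way, the real content is to produce the inscribed osculating circle at a point strictly between $a$ and $b$ rather than at the node.
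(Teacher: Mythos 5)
The paper does not prove this Fact: it is quoted as \cite[Proposition B.1]{KU} and used as a black box. So your argument cannot be compared to one in this paper; I can only check it on its own merits. Both routes you sketch have real gaps.

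\textbf{The Kneser route.} The reduction (round off the node inside $\overline{\Delta}$, apply Kneser to the smooth simple closed curve $\tilde\gamma$, and argue one of the two inscribed osculating circles must sit on the unmodified part) is a reasonable plan, but two things are not yet arguments. First, Kneser's statement as quoted only produces two inscribed osculating circles; it does not say they occur at local maxima of curvature. You need this to run the counting argument, and although it does hold (a fourth-order Taylor comparison of $\tilde\gamma$ against its osculating circle shows an inscribed osculating circle forces $\kappa'=0$, $\kappa''\le 0$ at the tangency), it must be supplied. Second, the ``single bump'' profile for the inserted arc is not free: for $\tilde\gamma$ to be smooth, the arc's curvature and all its derivatives must match those of $\gamma$ at the two junctions, and if, say, $\kappa'<0$ at the junction near $a$, the inserted curvature cannot begin by strictly increasing. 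One can still arrange that the inserted arc has at most one local maximum of curvature, but this requires a genuine construction and a check that the matching does not create spurious maxima; as written it is an assertion.

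\textbf{The max-curvature route.} This is where the real gap is. The claim that ``a barrier/comparison argument shows that $\gamma([a,b])$ cannot enter the open osculating disk $D(c)$'' at the point $c$ of maximum curvature is not a local maximum principle; it is a global statement of Blaschke/Pestov--Ionin type that crucially uses that $\gamma|_{[a,b]}$ has no self-intersections. A curve with curvature $\le\kappa_{\max}$ can perfectly well leave a $\kappa_{\max}$-circle and re-enter it from the far side; what forbids it here is the simplicity of the shell, and that argument is essentially the content of the Fact you are trying to prove. Asserting it as ``a barrier/comparison argument'' leaves the hard step unaddressed. The endpoint case is also not handled: if $\max_{[a,b]}\kappa$ is attained only at $a$ or $b$, ``flattening near $N$'' changes the shell, and passing to the limit drives the candidate point $c$ back to the endpoint, so you would not obtain $c\in(a,b)$. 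You correctly identify this (``the real content is to produce the inscribed osculating circle at a point strictly between $a$ and $b$''), but the sketch does not actually produce it.

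In short, the first route can likely be completed (the needed lemmas are standard but must be proved), while the second route as written circularly presupposes the key geometric fact and does not close the endpoint case.
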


Jackson \cite{J} proved that 
the geodesic curvature function attains 
at least one local maximum on a
given positive shell. The above fact
is a refinement of this assertion.

Throughout this section, we
fix a $\mu$-admissible pair $(\gamma_1,\gamma_2)$
such that $\gamma_2$ has a positive shell,
that is, the restriction of  $\gamma_2$ to
a closed interval
$[a_2,b_2](\subset \R/l_2\Z)$ gives
a positive shell.
We now fix this to be so, and set 
\begin{equation}\label{eq:Sigma2}
\Sigma_2:=\gamma_2([a_2,b_2]),
\end{equation}
which is the image of the positive shell.

\begin{definition}
A circle on $S^2$ is called 
a {\it $\mu$-circle} if its geodesic
curvature is $\mu(>0)$.
\end{definition}

\begin{lemma}\label{lem:Sigma2}
There exists a 
$\mu$-circle which 
is inscribed in $\Sigma_2$ 
(i.e. the $\mu$-circle lies in 
the closure of the interior domain of the positive
shell) and
meets 
$\Sigma_2$ at more than one point.
\end{lemma}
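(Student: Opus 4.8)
The plan is to construct the required $\mu$-circle by ``rolling'' a $\mu$-disk along the inside of $\Sigma_2$ until it is forced to touch $\Sigma_2$ at a second point. Throughout, write $\rho:=\cot^{-1}\mu\in(0,\pi/2)$ for the spherical radius of a $\mu$-circle, let $\Delta$ denote the interior domain of the positive shell $\gamma_2|_{[a_2,b_2]}$ (so $\Sigma_2=\partial\Delta$), and let $v:=\gamma_2(a_2)=\gamma_2(b_2)$ be its node; by definition the interior angle of $\Delta$ at $v$ is less than $\pi$, so in a small neighborhood of $v$ the closed region $\overline{\Delta}$ is contained in a geodesic wedge at $v$ of angle $<\pi$. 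First I would produce one $\mu$-disk inscribed in $\Sigma_2$: by Fact~\ref{prop:pnshell} there is $c\in(a_2,b_2)$ such that the osculating circle of $\gamma_2$ at $\gamma_2(c)$, together with the disk it bounds on its concave side, lies in $\overline{\Delta}$; since its geodesic curvature $\kappa_2(c)$ is less than $\mu$, its spherical radius exceeds $\rho$, so the closed $\rho$-ball about its center is a $\mu$-disk contained in $\Delta$. Translating the center of that $\mu$-disk along the geodesic toward $v$, there is a first parameter at which the disk meets $\Sigma_2$; the resulting $\mu$-disk $D_1$ still lies in $\overline{\Delta}$ and its boundary circle is tangent to $\Sigma_2$ from inside at some point $\gamma_2(t_1)$. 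Since a disk contained in $\overline{\Delta}$ cannot have the corner $v$ on its smooth boundary (its tangent half-plane there would have to fit inside a wedge of angle $<\pi$), $\gamma_2(t_1)$ is a smooth point of $\Sigma_2$, i.e.\ $t_1\in(a_2,b_2)$. If $\partial D_1$ already meets $\Sigma_2$ in more than one point we are done, so assume $\partial D_1\cap\Sigma_2=\{\gamma_2(t_1)\}$.

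Next I would roll. For $t\in[a_2,b_2]$ let $C(t)$ be the $\mu$-circle tangent to $\gamma_2$ at $\gamma_2(t)$ such that the $\mu$-disk $D(t)$ it bounds lies, near $\gamma_2(t)$, on the same side as $\Delta$; this is a smooth family with $D(t_1)=D_1$. Set
\[
E:=\{\,t\in[t_1,b_2) : D(t)\subset\overline{\Delta}\ \text{ and }\ C(t)\cap\Sigma_2=\{\gamma_2(t)\}\,\}.
\]
The decisive local input is the admissibility inequality $\kappa_2(t)<\mu$: at any smooth point $\gamma_2(t)$ the circle $C(t)$ is \emph{strictly} more curved than $\Sigma_2$, so the contact is clean --- in a fixed neighborhood of $\gamma_2(t)$ the disk $D(t)$ lies on the $\Delta$-side and $C(t)$ meets $\Sigma_2$ only at $\gamma_2(t)$ --- while away from $\gamma_2(t)$ the circle $C(t)$ has positive distance from $\Sigma_2$. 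Both properties persist under small changes of $t$, so $E$ is relatively open in $[t_1,b_2)$; as $t_1\in E$, the connected component of $E$ containing $t_1$ is an interval $[t_1,\tau)$ with $\tau\in(t_1,b_2]$. Also $\{t:D(t)\subset\overline{\Delta}\}$ is closed (a Hausdorff limit of subsets of the closed set $\overline{\Delta}$ lies in $\overline{\Delta}$), so $D(\tau)\subset\overline{\Delta}$.

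It remains to show $\tau<b_2$, which is exactly where the node is used. If $\tau=b_2$, then $D(t)\subset\overline{\Delta}$ for all $t$ near $b_2$; letting $t\to b_2$ and using $\gamma_2(t)\to v$ yields $v\in\partial D(b_2)$ with $D(b_2)\subset\overline{\Delta}$, which is impossible since the tangent half-plane of the smooth disk $D(b_2)$ at $v$ cannot lie inside the wedge of angle $<\pi$ containing $\overline{\Delta}$ near $v$. Hence $\tau<b_2$, so $\gamma_2(\tau)$ is a smooth point of $\Sigma_2$ with $\kappa_2(\tau)<\mu$, and $D(\tau)\subset\overline{\Delta}$. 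If $C(\tau)\cap\Sigma_2$ were the single point $\gamma_2(\tau)$, then by the clean-contact analysis applied at $\tau$ the parameter $\tau$ would be an interior point of $E$, contradicting that $[t_1,\tau)$ is the whole component. So $C(\tau)\cap\Sigma_2$ contains more than one point, and $C(\tau)\subset D(\tau)\subset\overline{\Delta}$; thus $C(\tau)$ is the desired $\mu$-circle inscribed in $\Sigma_2$.

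The step I expect to be the main obstacle is the pair of persistence statements hidden in the openness of $E$: one must use $\kappa_2<\mu$ to be sure the tangent $\mu$-disk does not silently cross $\Sigma_2$ in a neighborhood of the contact point, and use the positive distance elsewhere to be sure no new contact point can jump in from far away; the node obstruction (ruling out rolling into the corner $v$) is also essential, although the tangent-cone comparison above handles it cleanly. A more hands-on variant of the latter would show directly that a $\mu$-disk contained in $\overline{\Delta}$ and tangent to one branch of $\Sigma_2$ close enough to $v$ is forced to meet the other branch as well, which is the geometric reason rolling must stop before reaching the node.
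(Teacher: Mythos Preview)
Your proof is correct and follows essentially the same strategy as the paper's: start from the inscribed osculating circle furnished by Fact~\ref{prop:pnshell}, pass to an inscribed $\mu$-circle tangent to $\Sigma_2$, and then roll it along $\Sigma_2$ toward the node until a second contact is forced. The paper is terser---it takes the $\mu$-circle tangent at the osculating point directly (rather than your concentric-then-translate step), rolls toward $a_2$ instead of $b_2$, and leaves the openness and node-obstruction arguments implicit---but the underlying idea is the same.
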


We call such a $\mu$-circle 
\em{a bi-tangent circle}
of $\Sigma_2$.

\begin{proof}
By Fact \ref{prop:pnshell},
we can find an inscribed osculating circle
$\Gamma$ which meets $\Sigma_2$ at 
$$
P_0=\gamma_2(t_0)\in \Sigma_2\qquad (t_0\in (a_2,b_2)).
$$
We let $\Gamma_1$ be the $\mu$-circle
which is tangent to $\Sigma_2$ at $P_0$. 
Since $\mu$ is greater than the maximum of 
the geodesic curvature of $\gamma_2$,
$\Gamma_1$ lies in the left-hand side of $\Gamma$. 
In particular, 
$\Gamma_1$ is a $\mu$-circle inscribed in $\Sigma_2$.
For each $t\in(a_2,t_0]$, we let $C_t$
be the $\mu$-circle which is tangent to $\Sigma_2$
at $\gamma_2(t)$.
Then $\Gamma_1=C_{t_0}$ holds. We set
$$
t_1:=\inf\biggl\{
t\in (a_2,t_0]\,;\, \mbox{$C_t$ is inscribed in $\Sigma_2$.}\biggr\}
$$
Since $\mu$ is greater than the maximum of
the geodesic curvature of $\gamma_2$,
the circle 
$C_{t_1}$ meets $\Sigma_2$ somewhere other than $\gamma(t_1)$.
(Otherwise, we can roll $C_{t_1}$ towards the 
node of the shell, and then we can find $C_t$ ($t<t_1$)
which is inscribed in $\Sigma_2$.) 
So we get the assertion.
\end{proof}

\begin{figure}[h]
\begin{center}
        \includegraphics[height=3.0cm]{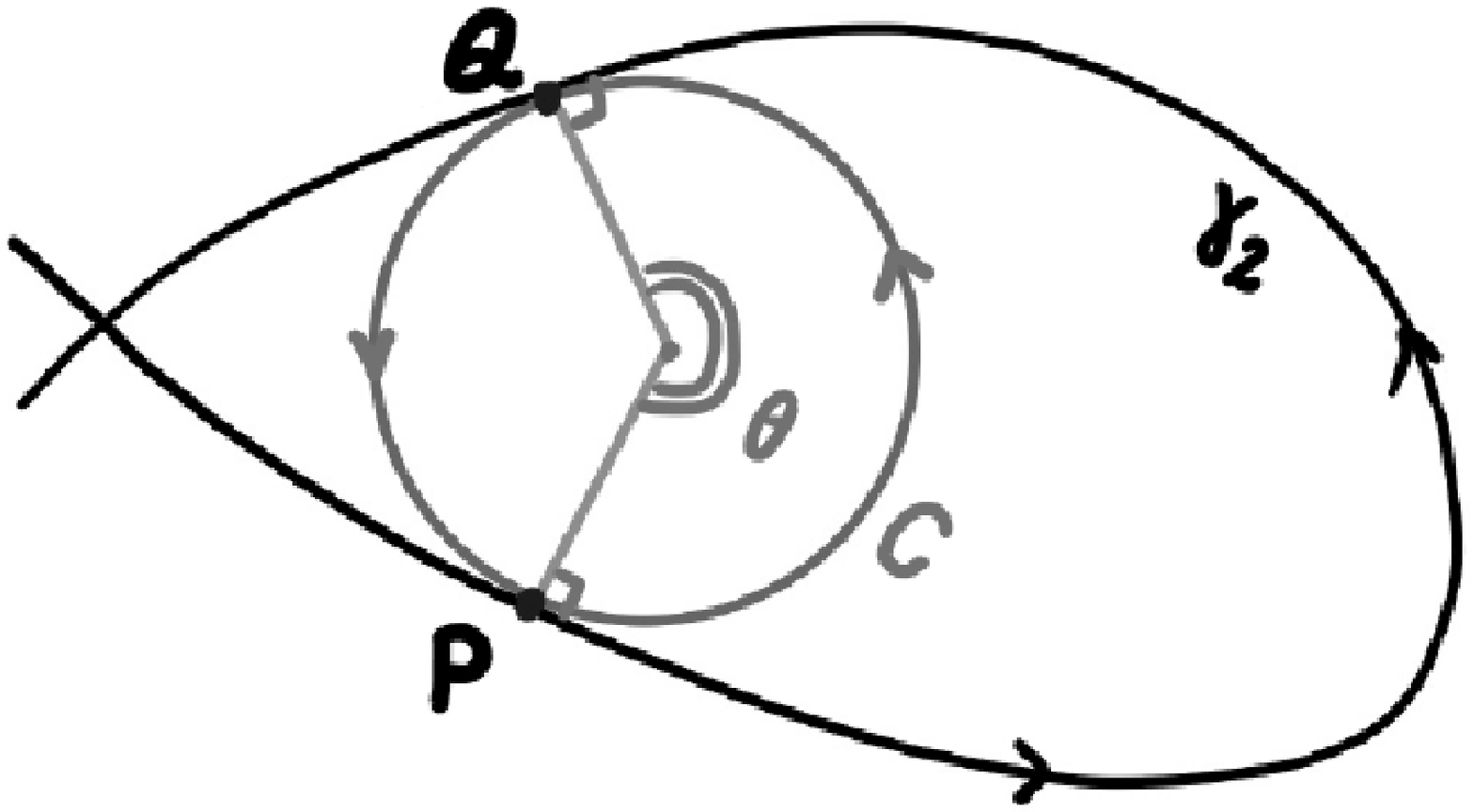}\quad
        \includegraphics[height=3.3cm]{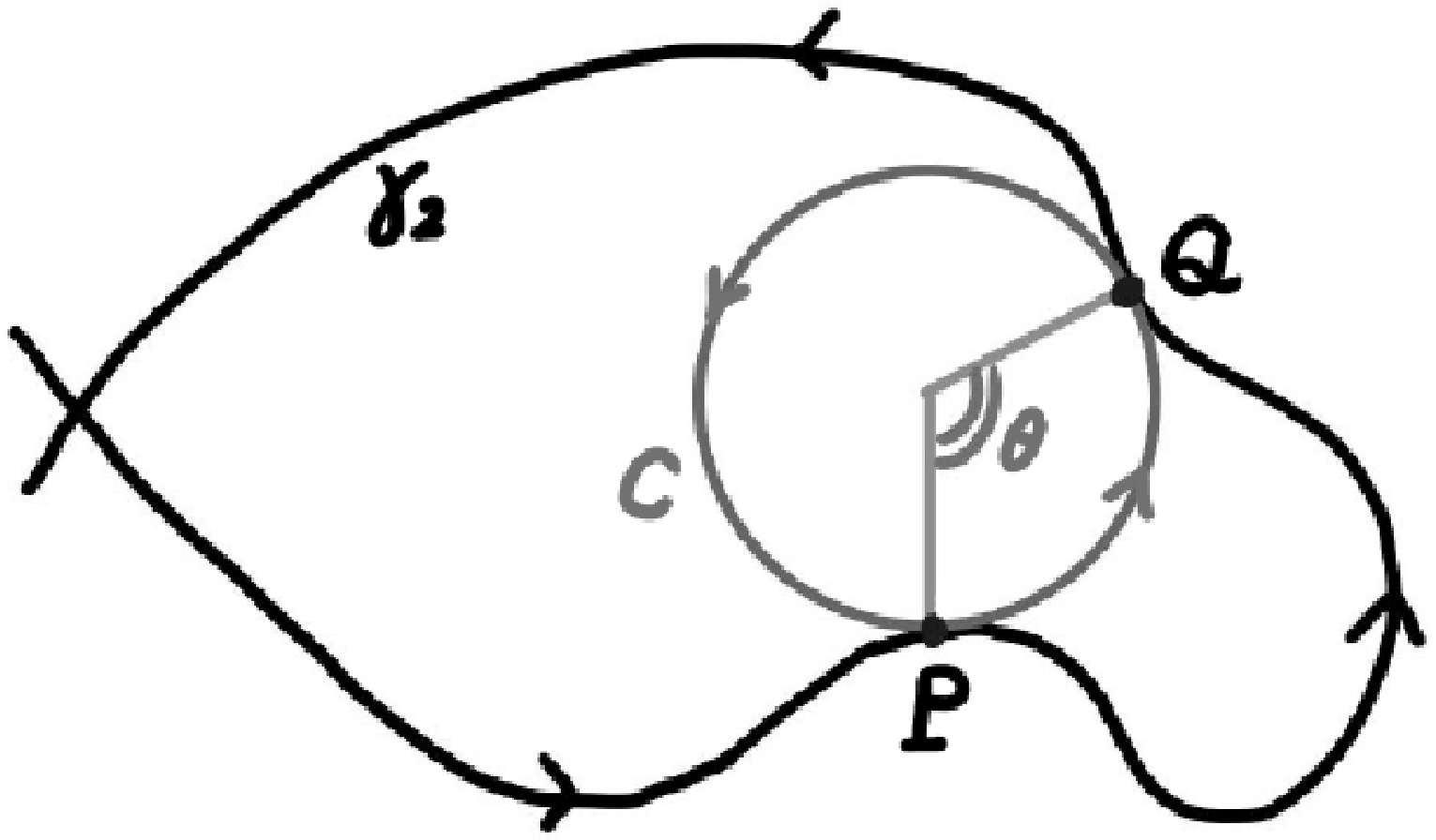}
\caption{
Inscribed $\mu$-circles of bi-tangent angle greater than $\pi$ (left)
and less than $\pi$ (right),
respectively }\label{Fig:im1a1b}
\end{center}
\end{figure}

\rm
For the sake of simplicity, we set
$C:=C_{t_1}$ and assume that 
the orientation of $C$ is
compatible with the orientation of the positive shell $\Sigma_2$. 
By definition, we have
$$
t_1=\inf\{t\in [a_2,b_2]\,;\, \gamma_2(t)\in C\}.
$$
We then set
$$
t_2:=\sup\{t\in [a_2,b_2]\,;\, \gamma_2(t)\in C\},
$$
and
$$
P:=\gamma_2(t_1),\qquad Q:=\gamma_2(t_2).
$$
By definition, it holds that
$
a_2<t_1<t_2<b_2.
$
We call the angle of the subarc of $C$ from $P$
to $Q$  the {\it bi-tangent angle}
of $C$. We denote it by $\theta$  
$($see Figure~\ref{Fig:im1a1b}$)$.
The following assertion plays an important role
in the next section:

\begin{proposition} \label{prop:keyM}
The bi-tangent angle $\theta$ of the $\mu$-circle $C$
is greater than or equal to $\pi$.
\end{proposition}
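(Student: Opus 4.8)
The plan is to argue by contradiction: suppose $\theta < \pi$, so that the bi-tangent arc of $C$ from $P=\gamma_2(t_1)$ to $Q=\gamma_2(t_2)$ subtends an angle less than $\pi$ as seen from the center $O$ of $C$. The key geometric input is Fact~\ref{fact:KU1}, which controls how an arc of geodesic curvature greater than $\mu$ can enter and exit the interior domain $\Delta_C$: when $\gamma_2$ leaves $C$ at a point and returns to $C$ while staying inside $\Delta_C$, the return point must lie in the \emph{past part} of $C$ with respect to the departure point. First I would examine the subarc $\gamma_2|_{[t_1,t_2]}$. By the definitions of $t_1$ and $t_2$ as the infimum and supremum of $\{t\in[a_2,b_2] : \gamma_2(t)\in C\}$, the arc $\gamma_2|_{(t_1,t_2)}$ does not meet $C$ except possibly at interior tangency/crossing points, and since $C$ is inscribed in $\Sigma_2$, the whole shell—hence this subarc—lies in the closure of $\Delta_C$. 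A tangency of $\gamma_2$ with $C$ from inside is impossible because $\mu$ exceeds the geodesic curvature of $\gamma_2$ everywhere (an interior tangent $\mu$-circle would have to lie on the concave side, i.e.\ outside $\Delta_C$ locally, contradicting the inscribed property unless it is a crossing). So $\gamma_2|_{(t_1,t_2)}$ genuinely lies in the open domain $\Delta_C$, and $\gamma_2|_{[t_1,t_2]}$ is an arc into $\Delta_C$ with both endpoints on $C$.

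Next I would apply Fact~\ref{fact:KU1} to this arc (after checking it has no self-intersections in $(t_1,t_2)$, which follows because $\gamma_2|_{[a_2,b_2]}$ is a shell, hence $\gamma_2|_{[a_2,b_2)}$ is simple). The conclusion is that $Q=\gamma_2(t_2)$ lies in the past part of $C$ with respect to $P=\gamma_2(t_1)$; equivalently, traversing $C$ in its (shell-compatible) orientation from $P$, one reaches the antipodal point $P'$ before reaching $Q$, so the oriented arc of $C$ from $P$ to $Q$ has angle at least $\pi$. But that oriented arc from $P$ to $Q$ is exactly what defines the bi-tangent angle $\theta$ (with the chosen orientation of $C$), so $\theta \ge \pi$, contradicting the assumption $\theta<\pi$. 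The main subtlety—and the step I expect to require the most care—is confirming that the bi-tangent angle as \emph{defined} (the angle of the subarc of $C$ from $P$ to $Q$, with $C$ oriented compatibly with $\Sigma_2$) coincides with the oriented-arc angle that Fact~\ref{fact:KU1} bounds below; one must check that the orientation of $C$ and the direction in which $\gamma_2$ traverses $[t_1,t_2]$ are consistent at $P$, which they are because $C=C_{t_1}$ is the $\mu$-circle tangent to $\Sigma_2$ at $\gamma_2(t_1)=P$ with matching orientation, so $\gamma_2'(t_1)$ points along $C$ in its positive direction.

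A secondary point to handle is the possibility that $\gamma_2$ meets $C$ at additional points strictly between $t_1$ and $t_2$ (so the arc $\gamma_2|_{[t_1,t_2]}$ is subdivided into several excursions into $\Delta_C$). In that case I would apply Fact~\ref{fact:KU1} to each consecutive sub-excursion: each return point lies in the past part relative to the corresponding departure point, and since all these contact points lie on $C$ in the cyclic order inherited from the parameter $t$ (using again that $\gamma_2|_{[a_2,b_2)}$ is simple and inscribed), the angular advances accumulate monotonically along $C$, forcing the total angle from $P$ to $Q$ to be at least $\pi$. Thus in all cases $\theta\ge\pi$, which is the assertion.
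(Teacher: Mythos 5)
Your proposal has a fundamental gap that makes the whole argument fall apart: you apply Fact~\ref{fact:KU1} to the arc $\gamma_2|_{[t_1,t_2]}$, but that fact requires the arc to have geodesic curvature \emph{greater} than $\mu$. By the $\mu$-admissibility condition \eqref{eq:k1k2} we have $\mu > \max_{t} \kappa_2(t)$, so $\gamma_2$ has geodesic curvature strictly \emph{less} than $\mu$ everywhere. The curvature hypothesis in Fact~\ref{fact:KU1} is not a technicality --- it is what forces the inward spiraling that makes the return point land in the past part --- and it simply fails here. (It is $\gamma_1$, not $\gamma_2$, that satisfies $\kappa > \mu$; the paper does apply Fact~\ref{fact:KU1}, but only to $\gamma_1$, in Lemmas~\ref{lem5-1}--\ref{lem5-3} of Section~\ref{sec4}.) Because $\gamma_2$ curves \emph{less} sharply than $C$, the naive dual of the statement would if anything push the return point into the \emph{future} part, the opposite of what you want, which is a sign the strategy cannot work as written.

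There is a second, related error in the geometric setup. Since $C$ is \emph{inscribed} in the shell $\Sigma_2$ (i.e.\ $C\subset\overline{\Delta_{\Sigma_2}}$) and is tangent to $\gamma_2$ at $P$ and $Q$ from the interior side, the subarc $\gamma_2|_{(t_1,t_2)}$ lies in the complement of $\Delta_C$, not inside it --- think of a small circle inscribed in a large lens-shaped region and tangent at two points: the boundary arc between the tangencies is entirely outside the small circle. So the hypotheses ``$\gamma((a,b))\subset\Delta_C$'' of Fact~\ref{fact:KU1} also fails. The paper's actual proof is of a quite different flavor: assuming $\theta<\pi$, it builds an auxiliary $C^1$-shell $\Sigma_2'$ bounded by pieces of $\Sigma_2$ and of $C$, introduces tangent $\mu'$-circles $C_1,C_2$ ($\mu'<\mu$ close to $\mu$), finds an intermediate circle $\Gamma$ inscribed in a lens $\Omega$, expands it to $\Gamma'$ tangent to $\gamma_2((a_2,t_1))$, and finally produces a $\mu$-circle $\tilde C$ inscribed in $\Sigma_2$ with a tangency at parameter value $<t_1$, contradicting that $t_1$ was an infimum. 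You would need some analogue of that construction --- or a genuinely new idea for curves of curvature below $\mu$ --- rather than a direct invocation of Fact~\ref{fact:KU1}.
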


\begin{figure}[h]
\begin{center}
        \includegraphics[height=5.3cm]{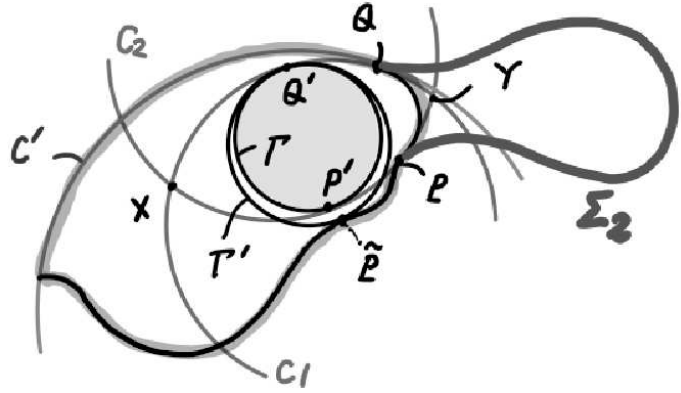}
\caption{
Proof of Proposition \ref{prop:keyM}
}
\label{Fig:keyfig}
\end{center}
\end{figure}

\begin{proof}
We prove the assertion by way of contradiction, so 
we suppose the bi-tangent angle of $C(:=C_{t_1})$ 
is less than $\pi$.
Then we define a $C^1$-shell joining
\begin{itemize}
\item the subarc of $\Sigma_2$ from the node $N_2$ of the
shell $\Sigma_2$ to $P$, 
\item the subarc of $C$ from $P$ to $Q$, and
\item the subarc of $\Sigma_2$ from $Q$ to
$N_2$.
\end{itemize}
We denote this $C^1$-differentiable shell by $\Sigma'_2$.
We let $\mu'$ be a number satisfying
$\mu>\mu'$.
Let $C_1$ (resp. $C_2$) be the $\mu'$-circle which is
tangent to $\Sigma_2$ at $Q$ (resp. at $P$).
We denote by $\Delta_{\Sigma'_2}$ the interior domain
bounded by $\Sigma'_2$.
Let $X,Y$ be the intersections of the two circles
$C_1$ and $C_2$.
If we take $\mu'$ sufficiently close to $\mu$,
then one of $\{X,Y\}$ lies in the
complement of the closure of the interior of $\Sigma'_2$
and the other lies in the interior of $\Sigma'_2$.
Without loss of generality,
we may assume $X$ lies in $\Delta_{\Sigma'_2}$.  
We then define a $C^1$-differentiable shell joining
\begin{itemize}
\item the subarc of $C_2$ from $X$  to $P$, 
\item the subarc of $C$ from $P$ to $Q$, and
\item the subarc of $C_1$ from $Q$ to
$X$.
\end{itemize}
We denote by $\Omega$ the interior domain of this 
newly defined $C^1$-differentiable shell. If we choose $\mu'$ sufficiently close to $\mu$,
then 
$
\Omega\subset \Delta_{\Sigma'_2}
$ holds.
We let $Q'$ (resp. $P'$) be
the midpoint of the arc bounded by $X$ and $Y$ on $C_1$ (resp. $C_2$).
Since the bi-tangent angle of $C$ is less than $\pi$,
we have $Q',P'\in \partial \Omega$, where $\partial \Omega$
is the boundary of the domain $\Omega$.
Moreover, there exists a unique circle $\Gamma$ which
is tangent to $\partial \Omega$
at two points $Q',P'$ satisfying
(cf. Figure \ref{Fig:keyfig})
\begin{itemize}
\item $\Gamma$ lies in $\overline{\Omega}$, and
\item the geodesic curvature of $\Gamma$ is less than $\mu$ 
and greater than $\mu'$.
\end{itemize}

We continuously expand $\Gamma$ keeping the property that it is tangent to
$C_1$ at $Q'$, and get a new circle $\Gamma'$ such that
\begin{itemize}
\item the geodesic curvature of $\Gamma'$ is less than that of $\Gamma$,
\item $\Gamma'$ lies in $\overline{\Delta_{\Sigma'_2} \cap \Delta_{C_1}}$,
\item $\Gamma'$ is tangent to $\gamma_2((a_2,t_1))$ at 
a certain point $\tilde P$.
\end{itemize}
We then consider the $\mu$-circle $\tilde C$ which is
inscribed in $\Gamma'$ and is tangent to $\Gamma'$ at $\tilde P$.
Since $\Gamma'$ is inscribed in $\Sigma'_2$, so is $\tilde C$.
Then $\tilde C$ must meet $\Sigma'_2$ only at $\tilde P$.
Since the image of $\Sigma'_2$ lies in the closure of the
interior domain of the positive shell $\Sigma_2$,
$\tilde C$ is a $\mu$-circle inscribed in $\Sigma_2$
which is tangent to  $\Sigma_2$ only at $\tilde P$.
This contradicts the definition of $t_1$.
\end{proof}

\section{The main theorem, its proof and applications}
\label{sec4}

\rm
We fix a $\mu$-admissible pair $(\gamma_1,\gamma_2)$.
The second and third authors  showed in \cite{KU}
that the extrinsic 
diameter of $f_{\gamma_1,\gamma_2}$ is $\pi$
under the assumption that
its mean curvature function does not change sign.
This assumption corresponds to the condition that
\begin{equation}\label{eq:KU}
\min_{t\in \R} \kappa_1(t)> \mu(>0),\qquad  \frac{-1}{\mu}> \max_{t\in \R} \kappa_2(t),
\end{equation}
which is stronger than the condition
\eqref{eq:k1k2}.
In this case, we can find a positive shell on $\gamma_1$
(cf. \cite[Prop. 3.7]{KU})
and can show the existence of an admissible bi-tangent of the 
second kind between $\gamma_2$ and
the positive shell on $\gamma_1$.
However, under our weaker assumption \eqref{eq:k1k2},
the argument given in \cite{KU} is not sufficient to show the following assertion:

\medskip
\noindent
{\bf Theorem B.}\label{thm:main}
{\it Let $(\gamma_1,\gamma_2)$ be a 
generic $\mu$-admissible pair.
Suppose that 
\begin{enumerate}
\item[(a)] either $\#(\gamma_1)$ or $\#(\gamma_2)$ is even, or
\item[(b)] 
each of $\#(\gamma_1),\#(\gamma_2)$ is odd, and
 $\gamma_1$ $($resp. $\gamma_2)$
has a negative $($resp. positive$)$
shell.
\end{enumerate}
Then the extrinsic diameter 
of $f_{\gamma_1,\gamma_2}$
is equal to $\pi$. }

\medskip
We prove this using a new idea different from the one
used in the proof of the main theorem in \cite{KU}.
Proposition A in the introduction follows from this assertion,
since a closed curve having only one crossing has
a positive shell and a negative shell at the same time. 

\begin{remark}
The main theorem of \cite{KU} is independent of Theorem B.
In fact, let $\gamma_1$ be a closed regular spherical
curve of positive geodesic curvature greater than $1$
whose topological type is $3_6$ as in Appendix B. 
Then $\gamma_1$ has 
three independent positive shells but has no 
negative shells. 
We let $\gamma_2$ be the closed curve obtained by reversing
the orientation of $\gamma_1$.
Then, by definition 
$$
\min_{t\in \R} \kappa_1(t)> 1,\qquad 
-1>\min_{t\in \R} \kappa_2(t).
$$
Since $\gamma_1$ has no negative shells,
$\gamma_2$ has no positive shells.
So the $\mu$-admissible pair ($\mu=1$)
$(\gamma_1,\gamma_2)$ does not satisfy
the assumption of Theorem B, but
satisfies \eqref{eq:KU}, and so
the extrinsic diameter of $f_{\gamma_1,\gamma_2}$
is $\pi$ by \cite[Proposition 4.7]{KU}.
\end{remark}

As pointed out in Fact \ref{fact:I1},
the case (a) is obvious.
So, to prove Theorem B, we may assume (b).
Then we can find a negative shell 
$\gamma_1|_{[a_1,b_1]}:[a_1,b_1]\to S^2$, where $[a_1,b_1]$ is
a subarc of $\R/l_1\Z$.
We set 
\begin{equation}\label{eq:Sigma1}
\Sigma_1:=\gamma_1([a_1,b_1]).
\end{equation}
To prove Theorem B, we prepare three lemmas:

\begin{lemma}\label{lem5-1}
There exists a $\mu$-circle $C$ such that
$\Sigma_1$ is inscribed in $C$
and meets $\Sigma_1$ at exactly two points.
\end{lemma}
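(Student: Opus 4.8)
\textbf{Plan for the proof of Lemma~\ref{lem5-1}.}
The strategy is to mirror the construction behind Lemma~\ref{lem:Sigma2}, but now applied to a \emph{negative} shell, so that ``inscribed'' and ``circumscribed'' roles are interchanged. First I would apply Fact~\ref{prop:pnshell} to the negative shell $\gamma_1|_{[a_1,b_1]}$: there is a $c\in(a_1,b_1)$ whose osculating circle $\Gamma$ of $\gamma_1$ lies in the closure of the interior domain of the shell $\Sigma_1$. Reversing the orientation of the shell turns it into a positive shell whose interior domain is the complement; thus from the point of view of that reversed shell, $\Gamma$ is a circumscribed osculating circle, i.e. $\Sigma_1$ lies in the closure of the interior domain of $\Gamma$, and $\Gamma$ touches $\Sigma_1$ at $P_0:=\gamma_1(c)$. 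Since the geodesic curvature of $\gamma_1$ is \emph{greater} than $\mu$, the osculating circle $\Gamma$ at $P_0$ is ``tighter'' than the $\mu$-circle tangent to $\Sigma_1$ at $P_0$ from the same side; hence the $\mu$-circle $\Gamma_1$ tangent to $\Sigma_1$ at $P_0$ (oriented compatibly) still circumscribes $\Sigma_1$, that is, $\Sigma_1$ lies in the closed interior domain of $\Gamma_1$.

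Next I would run the rolling argument. For $t$ ranging over a sub-interval $(a_1,c]$, let $C_t$ be the $\mu$-circle tangent to $\Sigma_1$ at $\gamma_1(t)$ on the appropriate side, so that $\Gamma_1=C_c$ circumscribes $\Sigma_1$. Set
\[
t_0:=\sup\bigl\{\,t\in(a_1,c]\ :\ \Sigma_1 \text{ is inscribed in } C_t\,\bigr\}
\]
(equivalently the infimum over the complementary set, depending on orientation conventions — I will choose the extremum toward the node). Because $\mu$ is strictly less than $\min\kappa_1$, the circle $C_{t_0}$ cannot be tangent to $\Sigma_1$ only at $\gamma_1(t_0)$: if it were, we could roll $C_{t_0}$ slightly toward the node and still keep $\Sigma_1$ inscribed, contradicting the choice of $t_0$. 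Therefore $C:=C_{t_0}$ meets $\Sigma_1$ at a second point. To upgrade ``at least two points'' to ``exactly two points'', I would invoke the standard fact that a $\mu$-circle and a regular spherical arc of geodesic curvature everywhere greater than $\mu$, tangent at a point and with the arc on the inside, can meet again only transversally and at isolated points; genericity of the $\mu$-admissible pair (together with a small perturbation of $\mu$ if necessary, as in the proof of Proposition~\ref{prop:keyM}) rules out more than two contact points, so $\Sigma_1\cap C$ consists of exactly two points.

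\textbf{Main obstacle.}
The delicate point is not the existence of a second intersection — that is the same rolling-and-taking-extremum trick as in Lemma~\ref{lem:Sigma2} — but controlling the \emph{number} of intersections and making the ``inside vs.\ outside'' bookkeeping precise for a negative shell, whose interior domain is the ``small'' side at the node while the osculating circle from Fact~\ref{prop:pnshell} sits on the side that looks ``large'' from the circle's viewpoint. I would be careful to fix orientation conventions once and for all (reversing the shell's orientation to reduce to the positive case handled in Section~\ref{sec3}) and to check that the curvature inequality $\kappa_1>\mu$ indeed forces the one-sided contact that makes the sup/inf well behaved. The finiteness of $\Sigma_1\cap C$ follows because each contact point where the arc does not cross $C$ would be a point where $\kappa_1=\mu$ (impossible) or a degenerate tangency excluded by genericity; the remaining crossings are transversal hence isolated on the compact arc, so there are finitely many, and the extremal choice of $t_0$ pins the count down to two.
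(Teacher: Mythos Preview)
Your plan for producing a bi-tangent $\mu$-circle --- reverse the orientation of the negative shell to make it positive and rerun the rolling argument of Lemma~\ref{lem:Sigma2} --- is exactly what the paper does. One slip: reversing orientation does \emph{not} replace the interior domain by its complement; $\Delta_\gamma$ is defined intrinsically as the side whose angle at the node is less than $\pi$, and only the left/right label (hence the sign of the shell) changes. This muddles your bookkeeping paragraph but is not the real problem.

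The genuine gap is your argument for ``exactly two'' contact points. None of the devices you propose works. Genericity in Definition~\ref{def:generic} constrains only the \emph{self}-intersections of $\gamma_i$; it says nothing about tangencies with an auxiliary $\mu$-circle. Your assertion that a non-crossing contact would force $\kappa_1=\mu$ is backwards: since $\kappa_1>\mu$ everywhere, at \emph{every} contact point $\Sigma_1$ stays locally on one side of $C$, so all contacts are one-sided tangencies and nothing degenerate is being excluded. The extremal choice of the rolling parameter yields a second contact but gives no upper bound on the total, and perturbing $\mu$ does not obviously reduce the count either.

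The paper closes this step with Fact~\ref{fact:KU1}, which you never invoke. Suppose $C$ met $\Sigma_1$ at three points $P_j=\gamma_1(t_j)$, taken in cyclic order $P_1,P_2,P_3$ on $C$ with $t_1<t_2<t_3$. Applying Fact~\ref{fact:KU1} to the simple subarcs of $\gamma_1$ between consecutive parameters (each has curvature $>\mu$ and lies in $\Delta_C$ except at its endpoints) forces $P_3$ into the past part and $P_1$ into the future part of $C$ relative to $P_2$; hence the cyclic order on $C$ must be $P_3,P_2,P_1$, a contradiction. This past/future monotonicity coming from the inequality $\kappa_1>\mu$ is the missing ingredient, and it is precisely why Fact~\ref{fact:KU1} was recalled at the end of Section~\ref{sec2}.
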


We call such a $\mu$-circle a \em{bi-tangent $\mu$-circle 
of $\Sigma_1$}.

\begin{proof}
If we reverse the orientation of $\Sigma_1$,
then it becomes a positive shell.
So by the same argument as in
Lemma \ref{lem:Sigma2}, we can find
a bi-tangent $\mu$-circle $C$ of $\Sigma_1$.
It is sufficient to show that $C$ meet $\Sigma_1$ at
exactly two points. If not, there are three points
$P_1,P_2,P_3$ on $C$ in this cyclic order and
three values $t_1,t_2,t_3\in [a_1,b_1]$ 
$(t_1<t_2<t_3)$
such that
$$
P_j=\gamma_1(t_j) \qquad (j=1,2,3).
$$
By Fact \ref{fact:KU1}, $P_3$ (resp. $P_1$)
lies in  the past part (resp. the future part)
of $P_2$, and so the cyclic order of the
three points on $C$ must be $P_3,P_2,P_1$, a contradiction.
\end{proof}

\rm
Let $C$ be the $\mu$-circle given in Lemma \ref{lem5-1}.
We let $P,Q\in \Sigma_1$ 
be the two bi-tangent points of $C$
on $\Sigma_1$ such that
$$
P:=\gamma_1(d_1),\quad Q:=\gamma_1(c_1)
\qquad (a_1<c_1<d_1<b_1).
$$

\begin{figure}[h]
\begin{center}
\includegraphics[height=5.8cm]{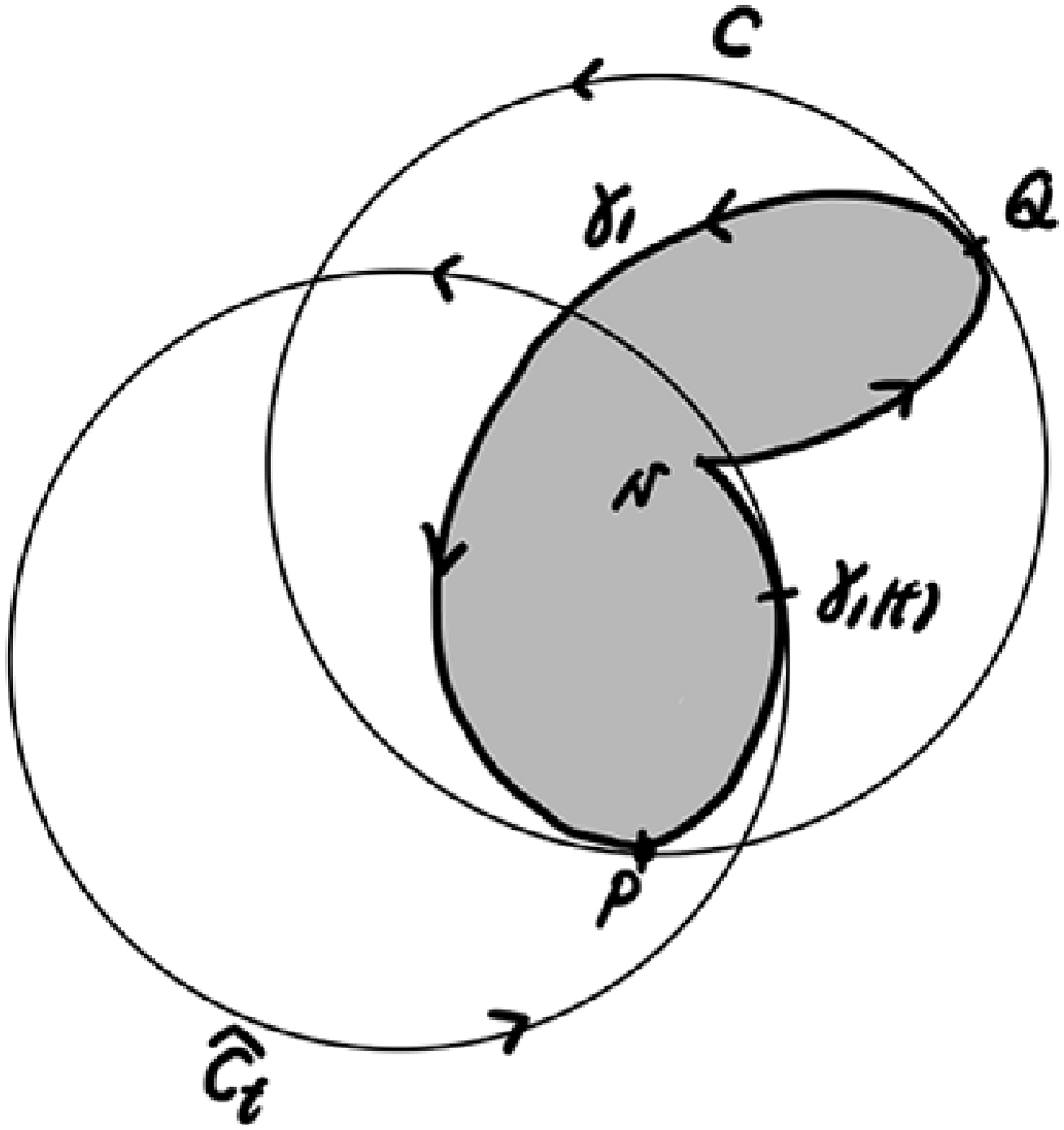}\qquad
\includegraphics[height=4.8cm]{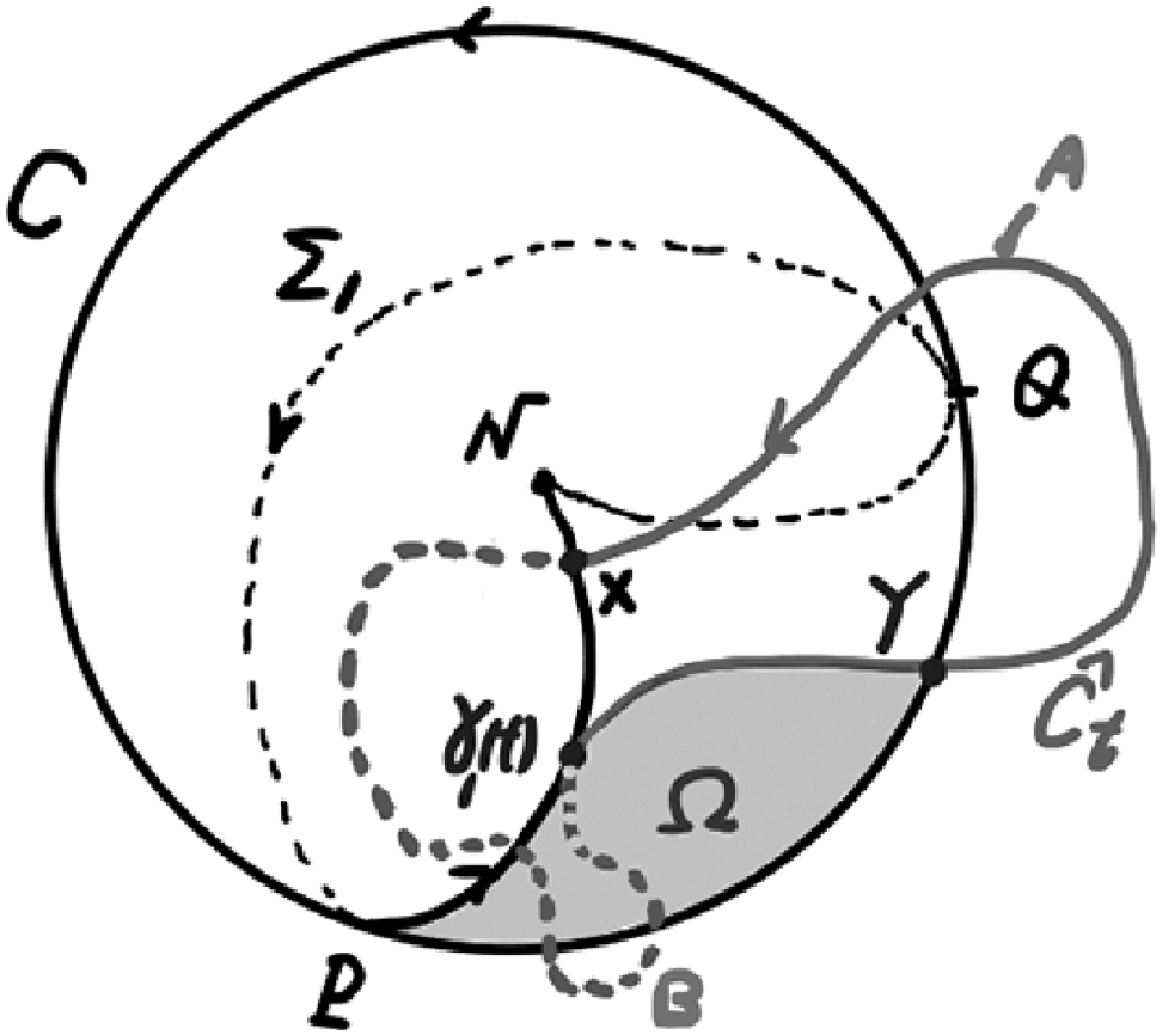}
\caption{
$\hat C_t$(left) and the proof of
Lemma \ref{lem5-3}
}
\label{Fig:final}
\end{center}
\end{figure}

For each $t\in [d_1,b_1]$,
we let $\hat C_t$ be the $\mu$-circle
which is tangent to $\Sigma_1$ at $\gamma_1(t)$
such that $\gamma_1(t')$ ($t'\ne t$)
lies in the interior domain of $\hat C_t$
whenever $t'$ is sufficiently close to $t$
(see Figure~\ref{Fig:final}, left).
Since $\gamma_1(t)$ lies in the interior domain
of $C$,  $\hat C_t$ meets $C$ at 
exactly two points on $S^2$. 

\begin{lemma}\label{lem5-2}
Suppse that there exists a point
$X=\gamma_1(x)$
$(x\in [d_1,b_1]\setminus \{t\})$
which lies on $\hat C_t$.
If $t<x\le b_1$ $($resp. $d_1\le x<t)$, 
then the subarc $\mathcal A$ of $\hat C_t$ 
from $\gamma_1(t)$ to $X$ 
(resp. from $X$ to $\gamma_1(t))$
meets the circle $C$ twice 
$($Figure \ref{Fig:final}, right in the case of $x>t)$.
\end{lemma}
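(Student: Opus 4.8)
The plan is to argue by contradiction, showing that if the subarc $\mathcal A$ of $\hat C_t$ fails to meet $C$ twice, then either $\hat C_t$ is not inscribed in $C$ in the way its construction forces, or $\Sigma_1$ violates Fact~3.6 (the strong version of Jackson's lemma applied to a negative shell, via Fact~2.14). Concentrate on the case $t<x\le b_1$; the other case follows by the same reasoning after reversing the orientation of $\Sigma_1$ (which turns the negative shell into a positive one, cf. the proof of Lemma~4.3). The circle $\hat C_t$ meets $C$ at exactly two points, say $U$ and $V$, by the remark preceding the lemma. Since $\hat C_t$ is tangent to $\Sigma_1$ at $\gamma_1(t)$ from inside and has geodesic curvature $\mu$, and since $\mu$ exceeds $\max_t\kappa_1(t)$ everywhere on $\gamma_1$, the arc of $\gamma_1$ near $\gamma_1(t)$ departs from $\hat C_t$ into the interior domain $\Delta_{\hat C_t}$. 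So the relevant question is purely about how the subarc of $\Sigma_1$ from $\gamma_1(t)$ to $X$ is positioned relative to the two regions cut out by $\hat C_t$ and $C$.

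First I would fix notation for the four regions into which $\hat C_t\cup C$ divides $S^2$: the lens $L=\Delta_{\hat C_t}\cap\Delta_C$, the two crescents $\Delta_C\setminus\overline{\Delta_{\hat C_t}}$ and $\Delta_{\hat C_t}\setminus\overline{\Delta_C}$, and the exterior. Because $\gamma_1(t)\in\Delta_C$ and $\gamma_1(t)\in\partial\hat C_t$, and because $\Sigma_1$ is inscribed in $C$ (Lemma~4.3), the arc $\gamma_1|_{[t,x]}$ starts on $\partial\hat C_t$, immediately enters $L$, and stays inside $\overline{\Delta_C}$. The endpoint $X=\gamma_1(x)$ lies on $\hat C_t$ by hypothesis. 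The key step is then to apply Fact~2.11 ({\cite[Lemma 4.5]{KU}}, the "past part" lemma) to $\gamma_1$ with respect to the circle $C$: since $P=\gamma_1(d_1)$ and $Q=\gamma_1(c_1)$ with $c_1<d_1<t<x$, and $P,Q$ are the only points of $\Sigma_1$ on $C$, the subarc $\gamma_1|_{[c_1,x]}$ lies in $\overline{\Delta_C}$ with interior in $\Delta_C$; this controls the winding of that subarc relative to $C$. Combined with the fact that $\gamma_1|_{[t,x]}$ leaves $\gamma_1(t)$ into $\Delta_{\hat C_t}$ and returns to $\partial\hat C_t$ at $X$, a continuity/Jordan-curve argument on the arc $\mathcal A\subset\hat C_t$ (the one bounding, together with $\gamma_1|_{[t,x]}$, a disk on which one can run the usual degree count) forces $\mathcal A$ to cross $C$ an even number of times, and at least twice, because $X$ and $\gamma_1(t)$ lie on opposite sides of $C$ relative to how $\mathcal A$ threads through $L$.

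The main obstacle I anticipate is making precise the claim that $\mathcal A$ — as opposed to the complementary arc $\hat C_t\setminus\mathcal A$ — is the one that crosses $C$, i.e. identifying which of the two arcs of $\hat C_t$ determined by $\gamma_1(t)$ and $X$ is meant, and why it is exactly this arc that must pass through both crescent regions. This is where the orientation hypothesis $t<x$ is essential and where Figure~8 (right) is doing real work. I would handle it by tracking the orientation of $\hat C_t$ inherited from the tangency condition with $\Sigma_1$ at $\gamma_1(t)$ (so that the interior domain lies consistently on one side), observing that the "inside tangency" forces the subarc of $\gamma_1$ just past $\gamma_1(t)$ to agree in direction with a definite arc of $\hat C_t$, and then using Fact~2.11 once more to pin down on which side of $C$ the point $X$ sits. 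Once the side of $X$ is determined, the parity of the number of intersections of $\mathcal A$ with $C$ is forced, and "at least two" follows since zero would contradict $X$ and $\gamma_1(t)$ being separated by $C$ along $\mathcal A$. The remaining details are routine planar topology once these orientation bookkeeping points are settled.
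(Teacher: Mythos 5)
There is a genuine gap. Your plan hinges on applying the past-part lemma (Fact~\ref{fact:KU1}) to $\gamma_1$ \emph{with respect to the circle $C$}, but the hypothesis of that fact requires both endpoints of the arc to lie on the circle and the interior to lie in its interior domain; for $\gamma_1|_{[c_1,x]}$ the endpoint $X=\gamma_1(x)$ lies on $\hat C_t$, not on $C$, so the fact does not apply in the way you propose. The paper instead applies Fact~\ref{fact:KU1} to $\gamma_1$ \emph{with respect to $\hat C_t$}, restricted to the first-return subarc $\gamma_1|_{[t,x_0]}$ where $x_0$ is the first parameter after $t$ at which $\gamma_1$ returns to $\hat C_t$; there both endpoints do lie on $\hat C_t$ and the open subarc lies in $\Delta_{\hat C_t}$, giving that $\gamma_1(x_0)$ is in the past part of $\hat C_t$ relative to $\gamma_1(t)$. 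Your write-up never introduces $x_0$, and you explicitly flag that you have not pinned down why it is $\mathcal A$ rather than the complementary arc that crosses $C$ --- but that is the whole content of the lemma, and leaving it unresolved is not a routine detail.

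You are also missing the quantitative step that closes the argument: since $C$ and $\hat C_t$ are both $\mu$-circles (equal radius), the arc of $\hat C_t$ contained in $\overline{\Delta_C}$ has length strictly less than half the circumference $l$, whereas the oriented arc of $\hat C_t$ from $\gamma_1(t)$ to a point in its past part has length at least $l/2$. Comparing these two lengths is exactly what forces that arc to leave $\overline{\Delta_C}$, hence meet $C$ twice. Nothing in your proposal plays this role; the vague ``degree count'' and ``$X$ and $\gamma_1(t)$ lie on opposite sides of $C$'' cannot substitute for it, and the latter is in fact false as stated, since both points lie in $\overline{\Delta_C}$. Two smaller issues: you write that $\mu$ exceeds $\max\kappa_1$, but the lemma concerns $\gamma_1$, for which $\min\kappa_1>\mu$ (the inequality is reversed; what you wrote is the condition on $\gamma_2$); and your opening appeal to the strong Jackson lemma (Fact~\ref{prop:pnshell}) is not used in the paper's argument for this lemma and does not appear to help here.
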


\begin{proof}
If $x>t$ (resp. $x<t$),
then we set
\begin{align*}
&x_0:=\inf\{y\,;\, y\in (t,x]\,\mbox{ and }\gamma_1(y)\in \hat C_t\} \\
&\phantom{ssssssss}(\mbox{resp. }
x_0:=\sup\{y\,;\, y\in [x,t)\,;\,\gamma_1(y)\in \hat C_t\}
).
\end{align*}
Then $\gamma_1(x_0)\in \hat C_t$ holds.
We let $\mathcal A_0$
be the subarc of $\mathcal A$ between
$\gamma_1(t)$ and $\gamma_1(x_0)$.
Let $l$ be the length of the $\mu$-circle $\hat C_t $.
Suppose that $\mathcal A$ lies in $\overline{\Delta_C}$.
Since $C$ and $\hat C_t$ have common radius
and $\mathcal A\subset \overline{\Delta_C\cap \Delta_{\hat C_t}}$,
the length of $\mathcal A$ must be less than 
$l/2$.
Since the open subarc of $\gamma_1$ between $t$ and $x_0$
lies in the interior of $\hat C_t$, 
Fact \ref{fact:KU1} yields that
$\gamma_1(x_0)$ must lie in the past (resp. the future)
part of $\hat C_t$ with respect to $\gamma_1(t)$ 
if $x_0>t$ (resp. $x_0<t$).
Thus, the length of $\mathcal A_0$ must be greater than $l/2$.
This creates a contradiction, proving
the conclusion.
\end{proof}

\begin{lemma}\label{lem5-3}
For each $t\in [d_1,b_1]$,
$\hat C_t$ never meets $\gamma_1((t,b_1])$.
\end{lemma}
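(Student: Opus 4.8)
The plan is to argue by contradiction, using Fact~\ref{fact:KU1} (the ``past part'' lemma) together with Lemma~\ref{lem5-2} and a planar‑topology argument carried out inside $\overline{\Delta_C}$. First I would dispose of the two boundary values of $t$. For $t=b_1$ the interval $(t,b_1]$ is empty. For $t=d_1$ the circle $\hat C_{d_1}$ is exactly $C$: both are the $\mu$-circle tangent to $\Sigma_1$ at $P=\gamma_1(d_1)$ having $\gamma_1$ locally in its interior domain (recall $\Sigma_1\subset\overline{\Delta_C}$ and $\gamma_1$ is more curved than the $\mu$-circle $C$), and since $C\cap\Sigma_1=\{P,Q\}$ with $c_1<d_1$ the circle $C=\hat C_{d_1}$ misses $\gamma_1((d_1,b_1])$. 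So I may assume $t\in(d_1,b_1)$; then $\gamma_1(t)\in\operatorname{int}\Delta_C$, hence $\hat C_t\neq C$ and the two $\mu$-circles $\hat C_t,C$ cross transversally at exactly two points $Y_1,Y_2$.

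Now assume, for contradiction, that $\gamma_1(x)\in\hat C_t$ for some $x\in(t,b_1]$, and set $x_0:=\inf\{y\in(t,b_1]:\gamma_1(y)\in\hat C_t\}$. Then $t<x_0\le b_1$, $\gamma_1(x_0)\in\hat C_t$, $\gamma_1((t,x_0))\subset\operatorname{int}\Delta_{\hat C_t}$, and, because $c_1,d_1<t$ while $\Sigma_1\cap C=\{P,Q\}$, the entire arc $\gamma_1([t,x_0])$ lies in $\operatorname{int}\Delta_C$. Applying Fact~\ref{fact:KU1} to the simple regular arc $\gamma_1|_{[t,x_0]}$ — which has geodesic curvature $>\mu$, interior inside $\Delta_{\hat C_t}$, and endpoints on $\hat C_t$ — I get that $\gamma_1(x_0)$ lies in the past part of $\hat C_t$ relative to $\gamma_1(t)$; equivalently, the subarc $\mathcal A\subset\hat C_t$ from $\gamma_1(t)$ to $\gamma_1(x_0)$ singled out by Fact~\ref{fact:KU1} has length $\ge l/2$, where $l$ is the common length of a $\mu$-circle. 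On the other hand, since $\hat C_t$ and $C$ are distinct circles of equal radius, the arc of $\hat C_t$ lying in $\overline{\Delta_C}$ has length $<l/2$; as $\gamma_1(t),\gamma_1(x_0)$ both lie on that short arc, $\mathcal A$ must be the ``long'' of the two subarcs joining them, so Lemma~\ref{lem5-2} shows that $\mathcal A$ meets $C$ exactly at $Y_1,Y_2$, and the complementary subarc $\mathcal A'$ is disjoint from $C$, hence lies in $\operatorname{int}\Delta_C$.

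The hard part is to close the contradiction from here. My approach would be topological: $J:=\gamma_1([t,x_0])\cup\mathcal A'$ is a Jordan curve inside $\operatorname{int}\Delta_C$ (the two pieces meet only at $\gamma_1(t),\gamma_1(x_0)$, since $\gamma_1((t,x_0))\subset\operatorname{int}\Delta_{\hat C_t}$ avoids $\hat C_t$), and it bounds a disk $D\subset\Delta_{\hat C_t}\cap\Delta_C$. I would then follow $\gamma_1|_{(x_0,b_1]}$: it leaves $\overline D$ at $x_0$ into the exterior of $\hat C_t$, it stays in $\overline{\Delta_C}$ (by Lemma~\ref{lem5-1}), it cannot cross the simple shell-arc $\gamma_1([t,x_0])\subset J$, and it terminates at the node $N_1=\gamma_1(b_1)$. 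Running this obstruction in its sharpest form — rolling $\hat C_{t'}$ along $\Sigma_1$ for $t'\in[t,x_0]$ and taking the last moment at which it still meets the part of $\gamma_1$ ahead of it, in the spirit of the proofs of Lemma~\ref{lem5-1} and Proposition~\ref{prop:keyM} — should produce a $\mu$-circle tangent to $\Sigma_1$ at an interior point $\gamma_1(t')$ of $(t,b_1)$ which meets $\gamma_1((t',b_1])$ nowhere, violating the minimality of $x_0$ (equivalently, contradicting Lemma~\ref{lem5-2} applied to that circle). Pinning down exactly which component of $S^2\setminus J$ the arc $\gamma_1|_{(x_0,b_1]}$ occupies and extracting that contradicting tangent $\mu$-circle is where the real work lies; everything else is bookkeeping with Fact~\ref{fact:KU1} and the elementary length comparison for arcs of two equal $\mu$-circles.
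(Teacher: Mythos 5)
Your preliminary reduction to $t\in(d_1,b_1)$ is correct, and so is your first use of Lemma~\ref{lem5-2}: the arc $\mathcal A$ of $\hat C_t$ from $\gamma_1(t)$ to $\gamma_1(x_0)$ does meet $C$ at the two intersection points of the equal-radius circles $C$ and $\hat C_t$, and the complementary arc (your $\mathcal A'$, the paper's $\mathcal B$) therefore stays inside $\overline{\Delta_C}$. But from there the argument does not close. Your proposed ``rolling'' step is not a contradiction: producing some $\hat C_{t'}$ that meets $\gamma_1((t',b_1])$ nowhere neither violates the minimality of $x_0$ (which is defined relative to the fixed circle $\hat C_t$, not $\hat C_{t'}$), nor contradicts Lemma~\ref{lem5-2}, which only gives information when a forward intersection \emph{exists}. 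You flag this step yourself as ``where the real work lies,'' and indeed it is the entire proof; the Jordan curve $J$ and disk $D$ you introduce are never used to derive an impossibility.

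The paper's argument is genuinely different in where it builds its Jordan region. Instead of your $J=\gamma_1([t,x_0])\cup\mathcal A'$ (built from the \emph{forward} shell arc), it forms $\Omega\subset\overline{\Delta_C}$ bounded by three pieces: the subarc $\mathcal A''$ of $\mathcal A$ from $\gamma_1(t)$ to its first $C$-crossing $Y$, the \emph{backward} shell arc $\gamma_1([d_1,t])$, and the arc of $C$ from $P$ to $Y$. The complementary arc $\mathcal B$ of $\hat C_t$ starts inside $\Omega$ near $\gamma_1(t)$ but ends at $X\notin\overline\Omega$, so it must cross $\partial\Omega$; it cannot cross $\mathcal A''$ (a circle has no self-intersections) nor $C$ (two distinct circles meet at most twice and both crossings already lie on $\mathcal A$), hence it crosses $\gamma_1([d_1,t])$ at some $Z=\gamma_1(z)$ with $z<t$. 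A \emph{second} application of Lemma~\ref{lem5-2}, now to the backward point $Z$, shows that the subarc of $\hat C_t$ from $Z$ to $\gamma_1(t)$ (contained in $\mathcal B$) meets $C$ twice, so $\hat C_t\cap C$ would have four points, which is impossible. This double application of Lemma~\ref{lem5-2} --- once forward to $X$, once backward to $Z$ --- is the missing idea in your proposal.
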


\begin{proof}
Suppose that 
$\hat C_t$ meets $\gamma_1((t,b_1])$ at $X=\gamma_1(x)$ ($t<x\le b_1$).
Let $\mathcal A$ be the subarc of $\hat C_t$ from $\gamma_1(t)$ to $X$.
By Lemma \ref{lem5-2}, $\mathcal A$ meets
$C$ twice. 
So we denote by $Y\in C$ 
the point where $\mathcal A$ starting from
$\gamma_1(t)$ meets $C$ the first time.
We then denote by $\mathcal A'$ the subarc of $\mathcal A$
from $\gamma_1(t)$ to $Y$.
Let $\mathcal B$ be the subarc of $\hat C_t$ from $X$
to $\gamma_1(t)$. Then 
$$
\mathcal A\cup \mathcal B=\hat C_t,\qquad  \mathcal A\cap \mathcal B
=\{X,\gamma_1(t)\}.
$$
Let $\mathcal E$ be the subarc of $C$
from $P$ to $Y$.
We let $\Omega(\subset \overline{\Delta_{C}})$ 
 be a domain 
bounded by the simple closed curve 
$\mathcal A'\cup \gamma_1([d_1,t])\cup \mathcal E$
(see Figure~\ref{Fig:final} right, where $\hat C_t$
is drawn so that it does not look like a circle).
Then any point on $\mathcal B$ sufficiently close to $\gamma_1(t)$
lies in $\Omega$.
Since $X\not \in \overline{\Omega}$, 
$\mathcal B$ must meet  
the boundary $\partial \Omega$.
However
\begin{itemize}
\item 
 $\mathcal B\setminus \{\gamma_1(t)\}$ cannot meet 
$\mathcal A'$, since $\hat C_t$ has no self-intersections.
\item
$\mathcal B$ cannot meet $C$,
since $\mathcal A$ meets $C$ twice and 
the radius of $\hat C_t$ and $C$ is common. 
\end{itemize}
So there must be a point $Z=\gamma_1(z)$ ($z\in (d_1,t)$)
such that $Z\ne \gamma_1(t)$ and $Z\in \mathcal B$.  
By  Lemma~\ref{lem5-2},
 $\mathcal B$ meets $C$ twice.
Then $\hat C_t(=\mathcal A\cup \mathcal B)$ meets $C$
four times, a contradiction. 
\end{proof}

\begin{figure}[htb]
\begin{center}
        \includegraphics[height=3.1cm]{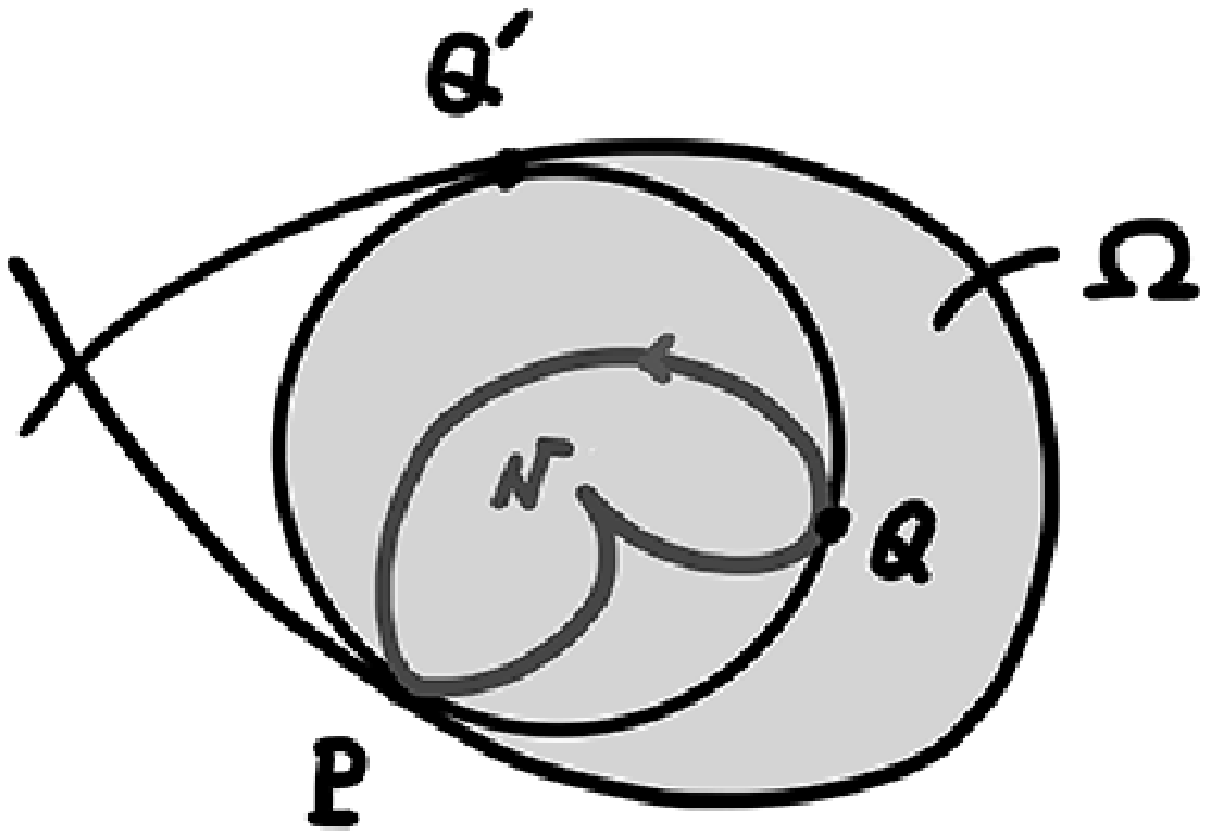}
        \includegraphics[height=3.2cm]{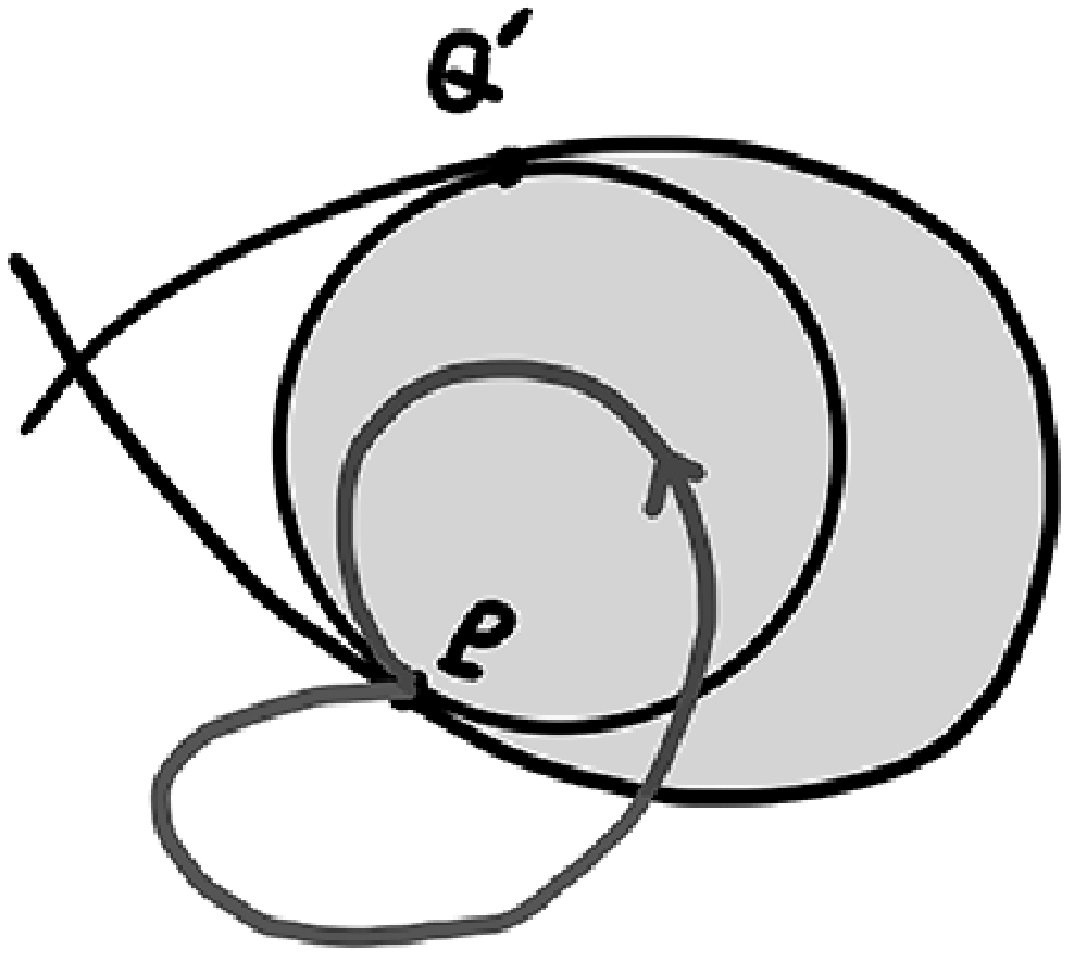}
        \includegraphics[height=3.2cm]{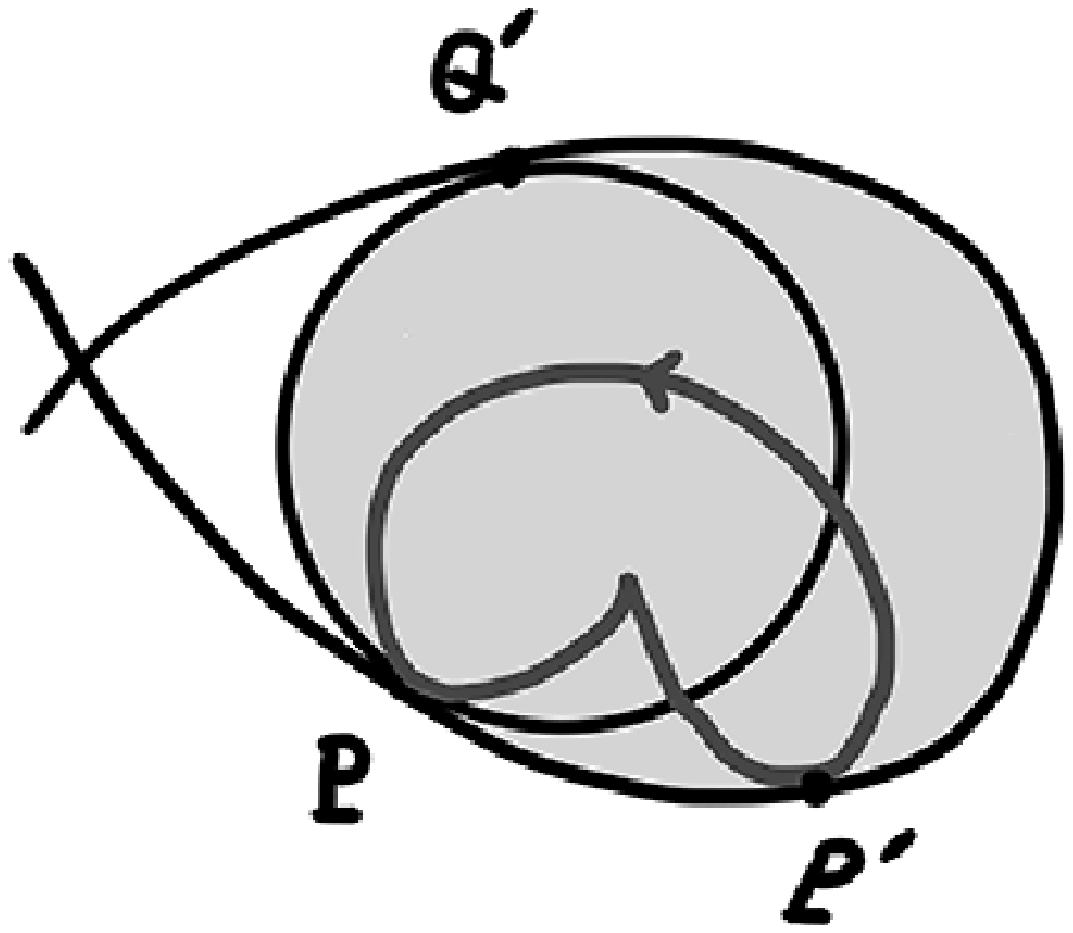}
\caption{Proof of Theorem B
}\label{Fig:bi-tangents2}
\end{center}
\end{figure}

\begin{proof}[Proof of Theorem B]
Let $\Sigma_2=\gamma_2([a_2,b_2])$ 
(cf.~\eqref{eq:Sigma2})
be the image of the positive shell given 
in Section \ref{sec3},
and let $C_{t_1}$ be the bi-tangent $\mu$-circle
inscribed in $\Sigma_2$.
To find the desired bi-tangent between $\gamma_1$
and $\gamma_2$, the initial position of $\gamma_1$ 
can be arbitrarily chosen.
So we may set the $\mu$-circle $C$ associated with
$\Sigma_1$ to coincide with 
$C_{t_1}$, and $\Sigma_1$ (cf. \eqref{eq:Sigma1})
is tangent to $\Sigma_2$ at the point $P$.
We set (cf. Figure \ref{Fig:bi-tangents2}, left)
$$
P:=\gamma_2(c_2)=\gamma_1(d_1),\quad
Q:=\gamma_1(c_1),\quad Q':=\gamma_2(d_2).
$$
Since 
\begin{itemize}
\item 
the bi-tangent angle of $C$ of 
$\Sigma_2$ is greater than or equal to
$\pi$ (cf.  Proposition  \ref{prop:keyM}) and
\item $P$ is the past part of $Q$ on $C$
(cf. Fact \ref{fact:KU1}),
\end{itemize}
\noindent
the three points
$P,Q,Q'$ lie on the circle $C(=C_{t_1})$
in this cyclic order.
We denote by $[PQ],[QQ']$ and $[Q'P]$ the subarcs of $C$
from $P$ to $Q$, $Q$ to $Q'$ and $Q'$ to $P$, respectively. 
Consider the $C^1$-differentiable simple closed curve given by
$
\mathcal S:=[Q'P]\cup \gamma_2([c_2,d_2]).
$
Let $\Omega$ be the open domain bounded by $\mathcal S$ which lies in the
closure of the interior domain of $\Sigma_2$.

For each $t\in [d_1,b_1]$,
there exists a unique orientation-preserving isometry 
$
\phi_t:S^2\to S^2
$
such that
\begin{enumerate}
\item $\phi_t(\gamma_1(t))=P$,
\item $\phi_t(\Sigma_1)$ is tangent to $\Sigma_2$ at $P$, and 
\item $\phi_t(\hat C_t)=C$.
\end{enumerate}
We then set
$$
\tau:=\sup(A),\qquad A:=\{t\in [d_1,b_1]\,;\, 
\mbox{$\phi_t(\Sigma_1)\subset \overline{\Omega}$}\}.
$$
Since $d_1\in A$,
the set $A$ is non-empty.
By \eqref{eq:k1k2}, we have $\tau>d_1$.
By the center of Figure~\ref{Fig:bi-tangents2},
$\tau<b_1$ holds.
 Suppose that $\phi_{\tau}(\Sigma_1)$ meets $[Q'P]$.
We know that $\phi_{\tau}\circ \gamma_1((\tau,b_1])$ 
lies in $\Delta_C$ (cf. Lemma~\ref{lem5-3}). 
In particular, it cannot meet $[Q'P]$.
Since the angle of subarc of $C$ from $Q'$ to $P$
is less than or equal to $\pi$
(cf. Proposition \ref{prop:keyM}),
$\phi_{\tau}\circ \gamma_1([a_1,\tau])$ also cannot 
meet $[Q'P]$, by Fact \ref{fact:KU1}. 
Since $\phi_{\tau}\circ \gamma_1((\tau,b_1])$ 
lies in $\Delta_C$, 
there exist $c'_1\in [a_1,\tau)$ 
and $c'_2\in (c_2,d_2))$
such that
$\phi_{\tau}(\Sigma_1)$ meets $\gamma_2((c_2,d_2))$ at 
the point 
$$
P'=\phi_{\tau}(\gamma_1(c'_1))=\gamma_2(c'_2).
$$
Then $(P,P')$ gives an admissible bi-tangent of the second kind
(cf.  Figure \ref{Fig:bi-tangents2}, right),
since this bi-tangent is exactly the case of
Figure 3, right.
\end{proof}

As a corollary, we can prove the following,
which contains Proposition A as a special case:

\begin{corollary}\label{cor:3}
Let $(\gamma_1,\gamma_2)$ be a $\mu$-admissible pair 
such that $\#(\gamma_i)\le 3$ $(i=1,2)$. Suppose that
\begin{itemize}
\item 
$\gamma_1$ is not of type $3_j$ $(j=3,6)$
$($ignoring their orientation$)$ as in 
Figure \ref{Fig:table1} of Appendix B, and
\item 
$\gamma_2$ is not of type $3_j$ $(j=3,6)$
as in  Figure \ref{Fig:table1}
including their orientation. 
\end{itemize}
Then $f_{\gamma_1,\gamma_2}$ has extrinsic diameter $\pi$.
\end{corollary}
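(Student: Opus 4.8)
The plan is to reduce the statement to Theorem B by a case analysis over the finitely many topological types of closed spherical curves with at most three crossings, using the table in Appendix B. First I would dispose of the trivial cases: if $\#(\gamma_1)$ or $\#(\gamma_2)$ is even, then condition (a) of Theorem B applies and $f_{\gamma_1,\gamma_2}$ has extrinsic diameter $\pi$ (equivalently, Fact \ref{fact:I1} applies). So I may assume both $\#(\gamma_1)$ and $\#(\gamma_2)$ are odd, hence each is $1$ or $3$. To invoke condition (b) of Theorem B, I must exhibit a negative shell on $\gamma_1$ and a positive shell on $\gamma_2$.

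The key observation is a shell-existence statement for curves of small crossing number: \emph{every generic closed spherical curve with exactly one crossing carries both a positive shell and a negative shell} (this is already noted in the text just after the statement of Theorem B, since the eight-figure curve splits at its node into two simple loops, one bounding its interior domain on the left and one on the right). For the three-crossing case I would consult the table (Figure \ref{Fig:table1}): for each of the ten unoriented topological types $3_1,\dots$ one reads off directly which shells are available. The content of the exclusion is precisely that types $3_3$ and $3_6$ are the only ones lacking a shell of the required sign: inspecting the diagrams, every type $3_j$ with $j\notin\{3,6\}$ admits \emph{both} a positive and a negative shell, whereas $3_6$ admits only positive shells (as already remarked in the Remark following Theorem B) and $3_3$ only negative shells (or vice versa, depending on the orientation convention fixed in the figure). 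This asymmetry in orientation is why the hypothesis on $\gamma_1$ is stated ``ignoring orientation'' while the hypothesis on $\gamma_2$ ``includes orientation'': reversing orientation swaps positive and negative shells (cf. the proof of Lemma \ref{lem5-1}), so a negative shell on $\gamma_1$ is available for either orientation of a non-$\{3_3,3_6\}$ type, but a positive shell on $\gamma_2$ requires pinning down the orientation for the borderline types.

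Granting this, the argument concludes quickly. Under the hypotheses, $\gamma_1$ is of a type admitting a negative shell and $\gamma_2$ is of a type (with its given orientation) admitting a positive shell; if either has an even number of crossings we are in case (a), and otherwise both crossing numbers are odd and we are exactly in case (b) of Theorem B. In either case Theorem B gives that $f_{\gamma_1,\gamma_2}$ has extrinsic diameter $\pi$. Finally, Proposition A is the special case $\#(\gamma_1)=\#(\gamma_2)=1$, which is not among the excluded types. The main obstacle is the enumeration step: verifying, case by case from Figure \ref{Fig:table1}, that shells of the correct sign exist for all eight non-exceptional three-crossing types and identifying $3_3$, $3_6$ as the genuine obstructions; this is a finite but slightly delicate diagram-chase, and care is needed with the orientation bookkeeping (which shell-sign is ``positive'' depends on the chosen orientation of each curve in the table).
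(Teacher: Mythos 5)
Your proposal follows the same route as the paper: dispose of the even and one-crossing cases, then reduce to Theorem B by reading off from Figure \ref{Fig:table1} which three-crossing types carry shells of both signs and which are exceptional. The core of the argument is sound, but two of your factual claims about the exceptional types are off, and the second of them masks the real reason for the asymmetric phrasing of the hypotheses. First, it is not the case that $3_6$ carries only positive shells while $3_3$ carries only negative ones (``or vice versa''); the two types behave \emph{identically} with respect to a fixed orientation. As the paper's proof notes, when either $3_3$ or $3_6$ is realized with positive geodesic curvature it has positive shells (this is automatic from \cite[Prop.~3.7]{KU}) and no negative shells; reversing orientation swaps the two, so the orientation drawn in Figure \ref{Fig:table1} has only negative shells for both types. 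Second, the reason the hypothesis on $\gamma_1$ can ``ignore orientation'' while that on $\gamma_2$ must respect it is not merely the shell-swap under reversal: it is that $\kappa_1 > \mu > 0$ forces $\gamma_1$ to be positively curved, so its orientation is not a free parameter. Consequently any $\gamma_1$ whose unoriented type is $3_3$ or $3_6$ automatically lands in the orientation lacking negative shells, and the unoriented exclusion is equivalent to the oriented one. By contrast $\kappa_2$ may be negative, so both orientations of a given unoriented type can occur for $\gamma_2$, and only the one drawn in Figure \ref{Fig:table1} (which lacks positive shells) need be excluded. These corrections do not change the structure of your argument; once they are made, your proof and the paper's coincide.
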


\begin{proof}
If $\#(\gamma_1)$ or $\#(\gamma_2)$ is even, the conclusion is
obvious, and the case $\#(\gamma_1)=\#(\gamma_2)=1$
was the case of Proposition A.
So we need to consider only the case
$\#(\gamma_1)=3$ or $\#(\gamma_2)=3$.
If $\gamma_i$ ($i=1,2$)
is not of type $3_j$ ($j=3,6$),
then $\gamma_i$ has a positive shell and a negative shell
at the same time. So we can find a negative shell on $\gamma_1$
and  a positive shell on $\gamma_2$.

We suppose $\gamma_1$ is of type $3_3$ or $3_6$.
Since $\gamma_1$ has positive geodesic curvature,
$\gamma_1$ has a positive shell (cf. \cite[Prop. 3.7]{KU}), 
and in this case, $\gamma_1$ has no negative shells
and does not satisfy the assumption of Theorem B.
We next suppose that $\gamma_2$ is of type $3_3$ or $3_6$
with the orientation indicated in Figure~\ref{Fig:table1}. 
Then $\gamma_2$ does not admit positive shells
and does not satisfy the assumption of Theorem B.
(However, if $\gamma_2$ is of type $3_3$ or $3_6$
but the orientation is opposite, then $\gamma_2$ has 
a positive shell.)
So we get the conclusion.
\end{proof}

\rm
There are, in total, $10$ types of spherical curves
at most 3-crossings, and
only the two cases  $3_3,3_6$ are exceptional.
Similarly, we can prove the following:
 
\begin{corollary}\label{cor:3-6}
Let $(\gamma_1,\gamma_2)$ be a $\mu$-admissible pair 
such that $\#(\gamma_i)\le 5$ $(i=1,2)$.
Suppose that 
\begin{itemize}
\item $\gamma_1$ is not of type $3_j$ ($j=3,6$) as in 
Figure \ref{Fig:table1},
nor a 5-crossing curve as in one of the 17-curves as 
in Figure \ref{Fig:table2} $($ignoring their orientation$)$ of Appendix B, and
\item $\gamma_2$ is not of types $3_j$ ($j=3,6$),
nor one of  the 5-crossing curves given in the list of
Figure \ref{Fig:table2} of Appendix B including their orientation. 
\end{itemize}
Then $f_{\gamma_1,\gamma_2}$ has extrinsic diameter $\pi$.
\end{corollary}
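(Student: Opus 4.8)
The plan is to reduce, exactly as in the proof of Corollary~\ref{cor:3}, to an application of Theorem~B, the only new ingredient being a shell dichotomy for $5$-crossing curves supplied by Appendix~B. First, if $\#(\gamma_1)$ or $\#(\gamma_2)$ is even, then Fact~\ref{fact:I1} already shows that $f_{\gamma_1,\gamma_2}$ has extrinsic diameter $\pi$, so I may assume that both $\#(\gamma_1)$ and $\#(\gamma_2)$ are odd; since $\#(\gamma_i)\le 5$, each of them equals $1$, $3$ or $5$. When $\#(\gamma_1),\#(\gamma_2)\le 3$, the clauses of the hypotheses that refer to $5$-crossing curves are vacuous, so the hypotheses reduce to exactly those of Corollary~\ref{cor:3} and the conclusion follows. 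It therefore remains to treat the case in which $\#(\gamma_1)=5$ or $\#(\gamma_2)=5$.

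In that case I would invoke Theorem~B, case (b), for which it suffices to produce a negative shell on $\gamma_1$ and a positive shell on $\gamma_2$. Two observations from the proof of Corollary~\ref{cor:3} are reused. Since $\min_t\kappa_1(t)>\mu>0$, the curve $\gamma_1$ has positive geodesic curvature and hence always carries a positive shell by \cite[Prop.~3.7]{KU}, so only the existence of a negative shell is in question; and reversing the orientation of a curve interchanges its positive and negative shells, which is why for $\gamma_2$---whose geodesic curvature is only bounded above by $\mu$---the existence of a positive shell is the orientation-dependent condition to be checked. The input needed from Appendix~B is the dichotomy: a generic $5$-crossing spherical curve whose type does not appear in Figure~\ref{Fig:table2} possesses both a positive and a negative shell, whereas each of the $17$ curves listed there fails to have a positive shell for the orientation drawn. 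Granting this, the hypotheses say precisely that $\gamma_1$, regarded without its orientation, and $\gamma_2$, regarded with its orientation, avoid the exceptional lists; a $3$-crossing $\gamma_i$ not of type $3_3$ or $3_6$ has both kinds of shell as in Corollary~\ref{cor:3}, and a $1$-crossing curve has both kinds of shell, as noted after the statement of Theorem~B. Hence $\gamma_1$ has a negative shell, $\gamma_2$ has a positive shell, and Theorem~B yields the conclusion.

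The reduction above is routine; the real content, and the step I expect to be the main obstacle, is establishing the dichotomy used in the second paragraph, i.e.\ producing Figure~\ref{Fig:table2}. This involves first enumerating the generic closed $5$-crossing curves on $S^2$ up to orientation-preserving diffeomorphism, with careful bookkeeping of the action of reversing the curve's orientation and of reflections, both of which swap the two kinds of shell; and then, for each of the finitely many curves that arise, deciding whether it admits a positive shell and whether it admits a negative shell. I expect the shell decision to be the delicate step: on each curve one must either locate a simple subarc returning to its node with linearly independent terminal tangents and with the interior domain on the prescribed side, or prove that no such subarc exists, and the latter must happen for precisely the $17$ exceptional curves. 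Organizing this search uniformly---say from the Gauss diagram or the region-adjacency data of each curve---and carrying it out over the whole list is where the work of the proof lies.
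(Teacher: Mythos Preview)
Your proposal is correct and follows essentially the same route the paper indicates: the paper simply says ``Similarly, we can prove the following'' after Corollary~\ref{cor:3} and then records that the exceptional curves with at most five crossings are $3_3$, $3_6$, and the $17$ curves of Figure~\ref{Fig:table2}, so the argument is exactly the reduction to Theorem~B via the shell dichotomy that you spell out. Your third paragraph overstates the remaining work a bit---the enumeration of $5$-crossing spherical curves is taken from \cite{KobU}, and Figure~\ref{Fig:table2} is presented as given data rather than something to be re-derived---but the logical structure of your proof matches the paper's.
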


There are, in total,  $105$ types of spherical curves with
at most 5-crossings, and the exceptional
case (i.e. curves that do not
admit a positive shell and a negative shell at the
same time) are $3_3,3_6$ and the $17$ curves listed
in Figure \ref{Fig:table2} of Appendix B.
So the cases for which Theorem B can be applied share
$$
74.5\%=\frac{\biggl((105-19)\times 2\biggr)\times 
\biggl((105-19)\times 2+19\biggr)}{(105\times 2)^2} \times 100.
$$ 
Since the number of curves with even crossings amongst 
$105$-curves is $22=1+2+19$.
So, if we do not apply Theorem B, we can judge only
$$
42.0\%=\frac{(22\times 2)\times (105 \times 2)+(105 \times 2)\times (22\times 2)
}{(105\times 2)^2} \times 100.
$$ 
So, at least for spherical curves within 5-crossings,
the percentage has extremely increased by Theorem B.

\appendix
\section{Properties of admissible pairs} 
\label{app:kitagawa}

In this appendix, we show that the assumption of
Theorem B does depend on the choice of
$\mu$-admissible pair which represents a common flat torus. 
For this purpose, we prepare a lemma
on parallel families of curves consisting of $\mu$-admissible
pairs.

Let $\g:\R\to S^2$ be a regular curve. 
For each $\theta \in \R$, the family of curves
$\g^{\theta}(s)$ ($\theta\in [0,2\pi)$)
in $S^2$ defined by 
\begin{equation}\label{eq10:adm}
\g^{\theta}(s) := (\cos\theta)\g(s) + (\sin\theta)n(s)
\qquad 
(s\in \R)
\end{equation}
is  called the {\it parallel curve} of the curve $\g(s)$.
It follows from (\ref{eq15:adm}) that 
\begin{equation}\label{eq20:adm}
(\g^{\theta})'(s) = (\cos\theta - \kappa(s)\sin\theta)\g{'}(s), 
\end{equation}
where $\kappa(s)$ denotes the geodesic curvature of $\g(s)$.

\begin{lemma}\label{lem10:adm}
Let $(\gamma_1,\gamma_2)$ be an admissible pair.
If the real number $\theta$ satisfies 
$$
|\cos\theta - \kappa_i(s)\sin\theta| > 0 
\qquad (i=1,2) 
$$
for all $s \in \R$,
then the curve $\g_i^{\theta}(s)$ is regular 
and its geodesic curvature 
$\kappa_i^{\theta}(s)$ satisfies the following:
\begin{itemize}
\item[(1)]
${\displaystyle \kappa_i^{\theta}(s) = \frac{\sin\theta + \kappa_i(s)\cos\theta}
{|\cos\theta -\kappa_i(s)\sin\theta|}}$,\\
\item[(2)]
$|(\g_i^{\theta})'(s)|^2(1 + \kappa_i^{\theta}(s)^2) =4$.
\end{itemize}
\end{lemma}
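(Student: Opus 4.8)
The plan is to reduce the whole statement to a direct computation carried out in the positively oriented moving frame $\{\g_i,\ \g_i'/|\g_i'|,\ n_i\}$ along each curve. Regularity is immediate from the formula \eqref{eq20:adm} recorded above: $(\g_i^{\theta})'(s)=(\cos\theta-\kappa_i(s)\sin\theta)\g_i'(s)$ is nonzero for all $s$ because $\g_i$ is regular and, by hypothesis, $\cos\theta-\kappa_i(s)\sin\theta$ never vanishes. Moreover, since $s\mapsto\cos\theta-\kappa_i(s)\sin\theta$ is continuous and nowhere zero on the connected set $\R$, its sign is a single constant $\epsilon_i\in\{\pm1\}$, and I would carry this $\epsilon_i$ through the remaining steps.

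The next step is to identify the unit normal $n_i^{\theta}$ of $\g_i^{\theta}$. Using $\g_i\times\g_i'=|\g_i'|\,n_i$ and $n_i\times\g_i'=-|\g_i'|\,\g_i$ (both consequences of $\{\g_i,\ \g_i'/|\g_i'|,\ n_i\}$ being positively oriented) together with $(\g_i^{\theta})'=(\cos\theta-\kappa_i\sin\theta)\g_i'$, one gets
\[
\g_i^{\theta}\times(\g_i^{\theta})'=(\cos\theta-\kappa_i\sin\theta)\,|\g_i'|\,\bigl(-\sin\theta\,\g_i+\cos\theta\,n_i\bigr),
\]
so that $n_i^{\theta}=\epsilon_i\bigl(-\sin\theta\,\g_i+\cos\theta\,n_i\bigr)$. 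For part (1) I would differentiate once more, $(\g_i^{\theta})''=-\kappa_i'\sin\theta\,\g_i'+(\cos\theta-\kappa_i\sin\theta)\g_i''$, and pair with $n_i^{\theta}$, using $\braket{\g_i',\g_i}=\braket{\g_i',n_i}=0$, $\braket{\g_i'',\g_i}=-|\g_i'|^2$ and $\braket{\g_i'',n_i}=\kappa_i|\g_i'|^2$. The $\g_i'$-term drops out, leaving $\braket{(\g_i^{\theta})'',n_i^{\theta}}=\epsilon_i(\cos\theta-\kappa_i\sin\theta)|\g_i'|^2(\sin\theta+\kappa_i\cos\theta)$; dividing by $|(\g_i^{\theta})'|^2=(\cos\theta-\kappa_i\sin\theta)^2|\g_i'|^2$ and observing that $\epsilon_i/(\cos\theta-\kappa_i\sin\theta)=1/|\cos\theta-\kappa_i\sin\theta|$ gives formula (1).

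Part (2) then follows by substituting (1) and using the Pythagorean identity $(\cos\theta-\kappa_i\sin\theta)^2+(\sin\theta+\kappa_i\cos\theta)^2=1+\kappa_i^2$:
\[
|(\g_i^{\theta})'|^2\bigl(1+(\kappa_i^{\theta})^2\bigr)=|\g_i'|^2\bigl[(\cos\theta-\kappa_i\sin\theta)^2+(\sin\theta+\kappa_i\cos\theta)^2\bigr]=|\g_i'|^2(1+\kappa_i^2)=4,
\]
the last equality being condition (1) of Definition~\ref{def:PAP}. I expect the only subtle point to be the sign bookkeeping: one must confirm that $\epsilon_i$ really is constant (this uses connectedness of $\R$, not just the pointwise non-vanishing hypothesis) and that $n_i^{\theta}$ is computed with the correct orientation of the frame; once these are pinned down, the rest is routine trigonometric algebra.
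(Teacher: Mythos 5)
Your proof is correct and follows the paper's structure closely, with one genuine (if small) variation in how formula~(1) is extracted. Both you and the paper start from $(\g_i^{\theta})'=(\cos\theta-\kappa_i\sin\theta)\g_i'$ to get regularity, and both compute the same unit normal $n_i^{\theta}=\epsilon_i(\cos\theta\,n_i-\sin\theta\,\g_i)$ by the cross-product formula; the proofs of~(2) are also identical (substitute~(1) and use $(\cos\theta-\kappa\sin\theta)^2+(\sin\theta+\kappa\cos\theta)^2=1+\kappa^2$). Where you diverge is in obtaining the geodesic curvature itself: the paper differentiates $n_i^{\theta}$ once and invokes the Frenet relation $(n^{\theta})'=-\kappa^{\theta}(\g^{\theta})'$ from~\eqref{eq15:adm}, which makes $\kappa_i^{\theta}$ drop out by comparing coefficients of $\g_i'$ in a single line; you instead compute $(\g_i^{\theta})''$ and pair it with $n_i^{\theta}$ using the definition $\kappa=\braket{\g'',n}/|\g'|^2$, which requires a second derivative of $\g_i^{\theta}$ and the inner-product identities $\braket{\g_i'',\g_i}=-|\g_i'|^2$, $\braket{\g_i'',n_i}=\kappa_i|\g_i'|^2$ but avoids appealing to the Frenet equation for the new curve. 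Both routes land on the same computation in the end; the Frenet route is a line or two shorter. Your explicit remark that $\epsilon_i$ is constant by connectedness of $\R$ is a worthwhile clarification that the paper leaves implicit.
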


\proof
Since $|\cos\theta - \kappa(s)\sin\theta| > 0$, it follows 
from (\ref{eq20:adm}) that the curve $\g^{\theta}(s)$ is 
regular and satisfies the following relation: 
$$
\frac{(\g^{\theta})'(s)}{|(\g^{\theta})'(s)|} = 
\epsilon \frac{\g{'}(s)}{|\g{'}(s)|},\quad
\text{where}\quad 
\epsilon := \frac{
\cos\theta-\kappa(s)\sin\theta}{|\cos\theta-\kappa(s)\sin\theta|} \in \{1,-1\}.
$$
Hence, the unit normal vector field of $\g^{\theta}(s)$ is given by  
$$
n^{\theta}(s) = \g^{\theta}(s) \times \frac{(\g^{\theta})'(s)}{|(\g^{\theta})'(s)|} 
= \epsilon\biggl((\cos\theta)n(s) - (\sin\theta)\g(s)\biggr). 
$$
So, using (\ref{eq15:adm}), we obtain
\begin{equation}\label{eq30:adm}
(n^{\theta})'(s) = -\epsilon\biggl(\sin\theta + \kappa(s)
\cos\theta\biggr)\g{'}(s). 
\end{equation}
On the other hand, the Frenet formula implies
$
(n^{\theta})' = -\kappa^{\theta}(\g^{\theta})'.
$
Hence the assertion (1) follows from (\ref{eq20:adm}) and (\ref{eq30:adm}). 
Furthermore, the assertion (2) follows from (\ref{eq20:adm}) and (1).  
\qed\\

\begin{proposition}\label{Henkei:T20}
Let $(\G)$ be an admissible pair, 
and let $\theta$ be a real number such that the geodesic curvature of $\g_i$ satisfies 
$$
\cos\theta - \kappa_i(s)\sin\theta > 0 \qquad (i=1, 2).
$$
We set $\gb_i(s) := \g_i^{\theta}(s)$.
Then the pair $(\GB) $ is an admissible pair, and 
$f_{\GB}(s_1, s_2) = gf_{\G}(s_1, s_2)g^{-1}$ holds, 
where $g = \exp \left(\theta e_1/2\right)  \in S^3$.
\end{proposition}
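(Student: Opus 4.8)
The plan is to mimic the two earlier propositions of Section~\ref{sec2}: first check that $(\GB)$ is an admissible pair, then exhibit an explicit lift $\bar c_i$ of $\widehat{\gb_i}$ built out of the lift $c_i$ of $\widehat{\g_i}$ and the fixed element $g=\exp(\theta e_1/2)$, and finally substitute into \eqref{eq3:adm} and let the factors of $g$ telescope.

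For admissibility I would note first that Lemma~\ref{lem10:adm}(2) gives condition (1) of Definition~\ref{def:PAP} for each $\gb_i$ \emph{with the same parameter} $s$; this is the point that keeps the last step clean, since no reparametrisation is then needed. For condition (2), the hypothesis $\cos\theta-\kappa_i(s)\sin\theta>0$ removes the absolute value in Lemma~\ref{lem10:adm}(1), so $\bar\kappa_i(s)=\phi_\theta(\kappa_i(s))$ with $\phi_\theta(x):=(\sin\theta+x\cos\theta)/(\cos\theta-x\sin\theta)$. One computes $\phi_\theta'(x)=(\cos\theta-x\sin\theta)^{-2}>0$ wherever $\phi_\theta$ is defined, and the hypothesis confines all of the numbers $\kappa_1(s_1),\kappa_2(s_2)$ to a single one of the two open intervals into which the pole $x=\cot\theta$ cuts $\R$ (or to all of $\R$ when $\sin\theta=0$, in which case the hypothesis forces $\cos\theta=1$, whence $\gb_i=\g_i$, $g$ is $\pm$ the identity, and both assertions are trivial). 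On that interval $\phi_\theta$ is strictly increasing, so $\kappa_1(s_1)>\kappa_2(s_2)$ yields $\bar\kappa_1(s_1)>\bar\kappa_2(s_2)$.

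For the congruence I would set $\bar c_i(s):=c_i(s)g^{-1}$ and first verify that this is a lift of $\widehat{\gb_i}$ with respect to $p_2$. By \eqref{eq10:pre} and the cocycle property of $Ad$, this reduces to computing $Ad(g^{-1})$ on the basis, using $Ad(c_i(s))e_3=\g_i(s)$, $Ad(c_i(s))e_1=\g_i'(s)/|\g_i'(s)|$, and hence $Ad(c_i(s))e_2=n_i(s)$ (as $e_2=e_3\times e_1$). Since $g=\exp(\theta e_1/2)$, the map $Ad(g)$ is the rotation about the $e_1$-axis fixing $e_1$ and sending $e_3$ to $(\cos\theta)e_3-(\sin\theta)e_2$ (computed from $ad(\theta e_1/2)$); so $Ad(g^{-1})e_1=e_1$, $Ad(g^{-1})e_3=(\cos\theta)e_3+(\sin\theta)e_2$, and therefore
\[
Ad(\bar c_i(s))e_3=(\cos\theta)\g_i(s)+(\sin\theta)n_i(s)=\gb_i(s),\qquad
Ad(\bar c_i(s))e_1=\frac{\g_i'(s)}{|\g_i'(s)|}=\frac{\gb_i'(s)}{|\gb_i'(s)|},
\]
where the last equality is \eqref{eq20:adm} together with $\cos\theta-\kappa_i(s)\sin\theta>0$. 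Thus $p_2\circ\bar c_i=\widehat{\gb_i}$. Feeding $\bar c_i=c_ig^{-1}$ into \eqref{eq3:adm}, the inner factors $g^{-1}g$ cancel:
\[
f_{\GB}(s_1,s_2)=\bigl(c_1(0)g^{-1}\bigr)^{-1}\bigl(c_1(s_1)g^{-1}\bigr)\bigl(c_2(s_2)g^{-1}\bigr)^{-1}\bigl(c_2(0)g^{-1}\bigr)=g\,f_{\G}(s_1,s_2)\,g^{-1},
\]
which is the assertion.

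The proof is essentially bookkeeping, so I expect the only places needing care to be: (i) the monotonicity argument for $\phi_\theta$, where one must use the hypothesis to rule out the $\kappa_i$'s straddling the pole $x=\cot\theta$ — otherwise condition (2) would survive only up to a branch ambiguity; and (ii) keeping the placement (left versus right multiplication) and the sign of $g$ exactly right, so that $Ad(\bar c_i)$ yields the parallel curve $\g_i^{+\theta}$ and not $\g_i^{-\theta}$, and so that the telescoping in \eqref{eq3:adm} gives conjugation by $g$ rather than by $g^{-1}$. If the orientation conventions in force make $Ad(g)$ the opposite rotation, one instead takes $\bar c_i:=c_ig$, with no other change.
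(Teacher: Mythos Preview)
Your proof is correct and follows essentially the same approach as the paper: define $\bar c_i=c_i g^{-1}$, compute $Ad(g^{-1})$ on the basis to verify $p_2\circ\bar c_i=\widehat{\gb_i}$, and telescope in \eqref{eq3:adm}. The only cosmetic difference is that the paper checks $\bar\kappa_1>\bar\kappa_2$ by writing the difference $\bar\kappa_1(s_1)-\bar\kappa_2(s_2)$ as the single fraction $(\kappa_1(s_1)-\kappa_2(s_2))\big/\bigl((\cos\theta-\kappa_1\sin\theta)(\cos\theta-\kappa_2\sin\theta)\bigr)$, which bypasses your monotonicity-plus-branch discussion entirely.
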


\proof
It follows from Lemma \ref{lem10:adm} that 
$$
\bar{\kappa}_1(s_1) - \bar{\kappa}_2(s_2) 
= \frac{\kappa_1(s_1) - \kappa_2(s_2)}{(\cos\theta - \kappa_1(s_1)\sin\theta)
(\cos\theta - \kappa_2(s_2)\sin\theta)} > 0
$$
and 
$$
|\gb_i{'}(s)|^2(1 + \bar{\kappa}_i(s)^2) = |\g_i{'}(s)|^2(1 + \kappa_i(s)^2) = 4.
$$
So the pair $(\GB)$ is an admissible pair. 
Furthermore, it follows from the definition of $g \in S^3$ that 
\begin{equation}\label{eq40:adm}
Ad(g^{-1})e_3 = (\sin\theta)e_2 + (\cos\theta)e_3.
\end{equation}
Let $c_i(s)$ be the lift of $\widehat{\g_i}(s)$ with 
respect to the covering $p_2 : S^3 \to US^2$, 
and let $\cb_i(s) = c_i(s)g^{-1}$.
Since the unit normal vector field of $\g_i(s)$ is 
given by $n_i(s) = Ad(c_i(s))e_2$, 
it follows from (\ref{eq40:adm}) that
$$
Ad(\cb_i(s))e_3 = (\sin\theta)n_i(s) + (\cos\theta)\g_i(s) =  
\g_i^{\theta}(s) = \gb_i(s).
$$
Since $Ad(g^{-1})e_1 = e_1$ and $\cos\theta - \kappa_i(s)\sin\theta > 0$, 
\eqref{eq20:adm} implies
$$
Ad(\cb_i(s))e_1 = \frac{\g_i{'}(s)}{|\g_i{'}(s)|} =
\frac{(\g_i^{\theta})'(s)}{ |(\g_i^{\theta}){'}(s)|} 
= \frac{\gb_i{'}(s)}{|\gb_i{'}(s)|}.
$$
Therefore $p_2\circ \cb_i = \widehat{\gb_i}$, and so we obtain
$$
f_{\GB}(s_1, s_2) 
= \cb_1(0)^{-1}\cb_1(s_1)\cb_2(s_2)^{-1}\cb_2(0) 
= gc_1(0)^{-1}c_1(s_1)c_2(s_2)^{-1}c_2(0)g^{-1}.
$$
Hence $f_{\GB}(s_1, s_2) = gf_{\G}(s_1, s_2)g^{-1}$.
\qed

\begin{remark}
Let $(\G)$ be an admissible pair such that 
the geodesic curvatures $\kappa_1$ and $\kappa_2$ are bounded.
Define $\alpha\in (0,\pi)$ and $\beta \in (-\pi,0)$ by
$$
\cot \alpha := \sup_{s\in \R} \kappa_1(s),\quad
\cot \beta := \inf_{\alpha\in \R} \kappa_2.
$$
If $\beta < \theta < \alpha$, then $\cos\theta - \kappa_i(s)\sin\theta > 0$.
\end{remark}

\begin{proposition}
Let $(\G)$ and $(\GB)$ be admissible pairs. 
Suppose that 
$$
f_{\GB}(s_1, s_2) = gf_{\G}(s_1, s_2)g^{-1}
$$
for some $g \in S^3$. 
Then there exists a real number $\theta$ such that the geodesic 
curvature of $\g_i$ satisfies
$
\cos\theta - \kappa_i(s)\sin\theta > 0 \ (i=1,2),
$
and $(\GB) \equiv (\g_1^{\theta},\g_2^{\theta})$ holds.
\end{proposition}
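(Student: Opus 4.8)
The plan is to reverse the argument of Proposition~\ref{Henkei:T20}. The hypothesis $f_{\GB} = g\,f_{\G}\,g^{-1}$ becomes completely rigid once it is restricted to the two coordinate axes. First I would set $s_2 = 0$ and then $s_1 = 0$ in \eqref{eq3:adm}, so that $f_{\G}(s_1,0) = c_1(0)^{-1}c_1(s_1)$, $f_{\G}(0,s_2) = c_2(s_2)^{-1}c_2(0)$, and likewise for $\GB$; comparing with the conjugation identity forces
$$
\cb_i(s) = h_i\, c_i(s)\, g^{-1}\qquad(i=1,2),\qquad h_i := \cb_i(0)\,g\,c_i(0)^{-1}\in S^3,
$$
where $c_i$ (resp.\ $\cb_i$) is the chosen lift of $\widehat{\g_i}$ (resp.\ $\widehat{\gb_i}$) with respect to the covering $p_2$.

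Next I would push this identity down through $p_2$. Writing the unit vectors $Ad(g^{-1})e_3 = a\,e_1 + b\,e_2 + c\,e_3$ and $Ad(g^{-1})e_1 = a'\,e_1 + b'\,e_2 + c'\,e_3$, and using $Ad(c_i(s))e_3 = \g_i(s)$, $Ad(c_i(s))e_2 = n_i(s)$, $Ad(c_i(s))e_1 = \g_i'(s)/|\g_i'(s)|$, the relation $p_2\circ\cb_i = \widehat{\gb_i}$ turns into the pair of pointwise identities
$$
\gb_i = Ad(h_i)\Bigl(a\,\tfrac{\g_i'}{|\g_i'|} + b\,n_i + c\,\g_i\Bigr),\qquad
\frac{\gb_i'}{|\gb_i'|} = Ad(h_i)\Bigl(a'\,\tfrac{\g_i'}{|\g_i'|} + b'\,n_i + c'\,\g_i\Bigr).
$$
Differentiating the first identity and substituting the spherical Frenet relations $(\g_i'/|\g_i'|)' = |\g_i'|(-\g_i + \kappa_i n_i)$ and \eqref{eq15:adm}, then matching the result against the second identity coefficientwise in the orthonormal frame $\{\g_i'/|\g_i'|,\,n_i,\,\g_i\}$, I would obtain
$$
a' = \frac{c - b\kappa_i}{W_i},\qquad b' = \frac{a\kappa_i}{W_i},\qquad c' = \frac{-a}{W_i},\qquad W_i := \sqrt{(c - b\kappa_i)^2 + a^2(1 + \kappa_i^2)} > 0,
$$
valid for all $s$ and for \emph{both} $i = 1,2$, with $a,b,c,a',b',c'$ constant.

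The crux is to extract from this system that a $\theta$ with the required sign exists, and I expect the admissibility condition to be exactly what makes this work. If $a \neq 0$, the last two equations give $\kappa_i(s) = -b'/c'$, a constant independent of $s$ and of $i$ --- impossible, since condition~(2) of Definition~\ref{def:PAP} demands $\kappa_1 > \kappa_2$. Hence $a = 0$, so $b' = c' = 0$, $Ad(g^{-1})e_1 = a'e_1$ with $a' = \pm1$, and $b^2 + c^2 = 1$; writing $b = \sin\theta$, $c = \cos\theta$, the first identity becomes $\gb_i = Ad(h_i)\,\g_i^{\theta}$, while $a' = \op{sgn}(\cos\theta - \kappa_i(s)\sin\theta)$ is forced to be constant in $s$ and $i$. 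If $a' = 1$, then $\cos\theta - \kappa_i(s)\sin\theta > 0$ for all $s$ and $i$, the curves $\g_i^{\theta}$ are regular by Lemma~\ref{lem10:adm}, $(\g_1^\theta,\g_2^\theta)$ is an admissible pair by Proposition~\ref{Henkei:T20}, and $(\GB) \equiv (\g_1^\theta,\g_2^\theta)$ via $\alpha_i := Ad(h_i) \in \op{SO}(3)$, which is the conclusion. The remaining possibility $a' = -1$ I would discard using the admissibility of $(\GB)$: since $Ad(h_i) \in \op{SO}(3)$ preserves geodesic curvature, Lemma~\ref{lem10:adm}(1) gives $\bar\kappa_i = (\sin\theta + \kappa_i\cos\theta)/(\kappa_i\sin\theta - \cos\theta)$, and a one-line computation shows
$$
\bar\kappa_1(s_1) - \bar\kappa_2(s_2) = \frac{\kappa_2(s_2) - \kappa_1(s_1)}{(\kappa_1(s_1)\sin\theta - \cos\theta)(\kappa_2(s_2)\sin\theta - \cos\theta)} < 0,
$$
contradicting condition~(2) for $(\GB)$. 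Thus $a' = 1$ necessarily. The only genuinely delicate points are the Frenet differentiation (routine but sign-sensitive) and the recognition that admissibility is precisely what rules out both degenerate branches $a \neq 0$ and $a' = -1$; everything else is bookkeeping.
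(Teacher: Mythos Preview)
Your proof is correct, but the route differs from the paper's. The paper does not write $Ad(g^{-1})e_3$ and $Ad(g^{-1})e_1$ in general form and differentiate; instead it evaluates the cross product $\partial_1 f \times \partial_2 f$ at the origin. From \eqref{eq30:pre} and \eqref{eq3:adm} one has $\partial_1 f_{\G}(0,0)\times\partial_2 f_{\G}(0,0) = \tfrac14|\g_1'(0)||\g_2'(0)|(\kappa_1(0)-\kappa_2(0))\,e_1$, and the same with bars; since both pairs are admissible, both cross products are \emph{positive} multiples of $e_1$, and the conjugation relation forces $Ad(g)e_1 = e_1$ in one stroke. This replaces your entire Frenet computation and the elimination of $a\neq 0$. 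Once $Ad(g^{-1})e_1=e_1$ is known, the paper writes $Ad(g^{-1})e_3 = (\sin\theta)e_2 + (\cos\theta)e_3$, derives $\gb_i = Ad(g_i)\g_i^\theta$ exactly as you do, and obtains the positivity of $\cos\theta - \kappa_i\sin\theta$ simply by observing that $\gb_i'/|\gb_i'| = Ad(\cb_i)e_1 = Ad(g_i)Ad(c_i)Ad(g^{-1})e_1 = Ad(g_i)\g_i'/|\g_i'|$, whence $\cos\theta - \kappa_i\sin\theta = |\gb_i'|/|\g_i'| > 0$; this replaces your elimination of $a'=-1$.

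Both arguments hinge on admissibility of the two pairs to fix signs, but in different places: the paper uses it once, at the origin, to get $Ad(g)e_1 = +e_1$ (not $-e_1$); you use it twice, to rule out $a\neq 0$ and then $a'=-1$. The paper's argument is shorter and avoids the differentiation altogether; your argument is more hands-on and has the virtue of making explicit exactly which algebraic identities force each degeneracy to fail.
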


\begin{proof}
Since $f_{\GB}=gf_{\G}g^{-1}$, we obtain
$\partial_if_{\GB}(0,0) = Ad(g)\partial_if_{\G}(0,0)$, and so
\begin{equation}\label{eq50:adm}
\partial_1f_{\GB}(0,0) \times \partial_2f_{\GB}(0,0) =
Ad(g)\biggl(\partial_1f_\G(0,0) \times \partial_2f_\G(0,0)
\biggr),
\end{equation}
where $\partial_i:=\partial/\partial s_i$ ($i=1,2$).

On the other hand, it follows from (\ref{eq30:pre}) and (\ref{eq3:adm}) that
$$
\partial_1f_\G(0,0) = \frac12|\g_1{'}(0)|\biggl(e_2 + \kappa_1(0)e_3\biggr), \quad
\partial_2f_\G(0,0) = -\frac12|\g_2{'}(0)|\biggl(e_2 + \kappa_2(0)e_3\biggr).
$$
Hence
$$
\partial_1f_\G(0,0) \times \partial_2f_\G(0,0) =
\frac14|\g_1{'}(0)|\,|\g_2{'}(0)|\biggl(\kappa_1(0) - \kappa_2(0)\biggr)e_1.
$$
Similarly, we obtain
$$
\partial_1f_{\GB}(0,0) \times \partial_2f_{\GB}(0,0) =
\frac14|\gb_1{'}(0)|\,|\gb_2{'}(0)|\biggl(\bar{\kappa}_1(0)-\bar{\kappa}_2(0)\biggr)e_1,
$$
where $\bar{\kappa}_i$ denotes the geodesic curvature of $\gb_i$.
So it follows from (\ref{eq50:adm}) that 
$
Ad(g)e_1 = e_1.
$
Then there exists a real number $\theta$ such that
\begin{equation}\label{eq60:adm}
		Ad(g)e_2 = (\cos\theta)e_2 + (\sin\theta)e_3, \quad
		Ad(g)e_3 = -(\sin\theta)e_2 + (\cos\theta)e_3.
\end{equation}
Let $c_i$ (resp. $\cb_i$) 
be a curve in $S^3$ such that $p_2\circ c_i = 
\widehat{\g_i}$ (resp. $p_2\circ \cb_i = \widehat{\gb_i}$).
Since $f_{\GB}(s,0)$ coincides with 
$gf_\G(s,0)g^{-1}$,  we obtain $\cb_1(0)^{-1}\cb_1(s) =
gc_1(0)^{-1}c_1(s)g^{-1}$. Hence
$$
\cb_1(s) = g_1c_1(s)g^{-1}, \quad
\text{where} \ \ g_1 = \cb_1(0)gc_1(0)^{-1}.
$$
Similarly, it follows from $f_{\GB}(0,s) 
= gf_\G(0,s)g^{-1}$ that
$$
\cb_2(s) = g_2c_2(s)g^{-1}, \quad
\text{where} \ \ g_2 = \cb_2(0)gc_2(0)^{-1}.
$$
Hence it follows from (\ref{eq60:adm}) that
$$
\gb_i(s) = Ad(\cb_i(s))e_3 
= Ad(g_i)Ad(c_i(s))\biggl((\sin\theta)e_2 + (\cos\theta)e_3
\biggr)
= Ad(g_i)\g_i^{\theta}(s).
$$
This shows that $(\GB) \equiv (\g_1^{\theta},\g_2^{\theta})$.
Furthermore, it follows from (\ref{eq20:adm}) that
$$
\gb_i{'}(s) = (\cos\theta-\kappa_i(s)\sin\theta)Ad(g_i)\g_i{'}(s).
$$
On the other hand, since
$$
\frac{\gb_i{'}(s)}{|\gb_i{'}(s)|} = Ad(\cb_i(s))e_1
= Ad(g_i)Ad(c_i(s))e_1 = \frac{Ad(g_i)\g_i{'}(s)}{|\g_i{'}(s)|}\qquad
(i=1,2),
$$
we have $\cos\theta-\kappa_i(s)\sin\theta = |\gb_i{'}(s)|/|\g_i{'}(s)| > 0$. 
\end{proof}

\begin{proposition}\label{deform:T30}
Let $(\G)$ be an admissible pair.
Suppose that there exists a real number $\theta \ (0 < \theta < \pi)$ satisfying 
\begin{equation}\label{eq70:adm}
\kappa_1(s_1) > \cot\theta > \kappa_2(s_2) 
\quad \text{for all} \ s_1, s_2 \in \R.
\end{equation}
We set $\gb_1(s):= \g_1^{\theta}(s)$ and $\gb_2(s):= \g_2^{\theta}(-s)$.
Then
\begin{itemize}
\item[(1)] 
$(\GB)$ is an admissible pair such that 
$\bar{\kappa}_1(s_1) > \cot\theta > \bar{\kappa}_2(s_2)$, 
\item[(2)]
there exists $a \in S^3$ such that 
$f_{\GB}(s_1, s_2) = af_{\G}(s_1, -s_2)a^{-1}$,
\item[(3)]
the mean curvature functions satisfy $H_{\GB}(s_1, s_2) =
 - H_{\G}(s_1, -s_2)$.
\end{itemize}
\end{proposition}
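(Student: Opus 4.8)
The plan is to read off all three assertions from the parallel-curve formulas of Lemma~\ref{lem10:adm}, after making an explicit choice of $p_2$-lifts for $\widehat{\gb_1}$ and $\widehat{\gb_2}$. The one genuinely delicate point is the sign bookkeeping, which I would settle first; I expect that, and the care needed to track which unit tangent the vector $e_1$ is sent to, to be the main obstacle — everything after it is formal.

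Since $0<\theta<\pi$ we have $\sin\theta>0$, so multiplying \eqref{eq70:adm} by $\sin\theta$ turns it into
$$
\cos\theta-\kappa_1(s)\sin\theta<0<\cos\theta-\kappa_2(s)\sin\theta\qquad(s\in\R).
$$
Thus $|\cos\theta-\kappa_i(s)\sin\theta|>0$ and Lemma~\ref{lem10:adm} applies, but with \emph{opposite} signs $\epsilon$: namely $\epsilon=-1$ for $\g_1$ and $\epsilon=+1$ for $\g_2$. Concretely this says $(\gb_1)'/|(\gb_1)'|=-\g_1'/|\g_1'|$, and, applying the chain rule to $\gb_2(s)=\g_2^\theta(-s)$ together with $\epsilon=+1$, $(\gb_2)'(s)/|(\gb_2)'(s)|=-\g_2'(-s)/|\g_2'(-s)|$.

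For (1) I would introduce $\psi(k):=(\sin\theta+k\cos\theta)/(k\sin\theta-\cos\theta)$. Lemma~\ref{lem10:adm}(1) with $|\cos\theta-\kappa_1\sin\theta|=\kappa_1\sin\theta-\cos\theta$ gives $\bar\kappa_1(s_1)=\psi(\kappa_1(s_1))$; and since reversing the parameter negates the geodesic curvature while $|\cos\theta-\kappa_2\sin\theta|=\cos\theta-\kappa_2\sin\theta$, one likewise gets $\bar\kappa_2(s_2)=-\kappa_2^\theta(-s_2)=\psi(\kappa_2(-s_2))$. The elementary identity
$$
\psi(k)-\cot\theta=\frac1{\sin\theta\,(k\sin\theta-\cos\theta)}
$$
shows that $\psi$ maps $\{k>\cot\theta\}$ into itself and $\{k<\cot\theta\}$ into itself, so \eqref{eq70:adm} gives $\bar\kappa_1(s_1)>\cot\theta>\bar\kappa_2(s_2)$ for all $s_1,s_2$, in particular $\bar\kappa_1>\bar\kappa_2$. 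Since a reparametrization does not change $|\g'|^2(1+\kappa^2)$, Lemma~\ref{lem10:adm}(2) yields condition (1) of Definition~\ref{def:PAP} for both $\gb_1$ and $\gb_2$; hence $(\GB)$ is an admissible pair.

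For (2) I would choose $a\in S^3$ with $Ad(a^{-1})e_1=-e_1$ and $Ad(a^{-1})e_3=(\sin\theta)e_2+(\cos\theta)e_3$ (such $a$ exists because $Ad:S^3\to\op{SO}(3)$ is onto, and then automatically $Ad(a^{-1})e_2=-(\cos\theta)e_2+(\sin\theta)e_3$). Taking lifts $c_i$ of $\widehat{\g_i}$, set $\bar c_1(s):=c_1(s)a^{-1}$ and $\bar c_2(s):=c_2(-s)a^{-1}$. Using $\g_i^\theta=(\cos\theta)\g_i+(\sin\theta)n_i$, $n_i=Ad(c_i)e_2$, and the two sign facts above, one checks $Ad(\bar c_i(s))e_3=\gb_i(s)$ and $Ad(\bar c_i(s))e_1=(\gb_i)'(s)/|(\gb_i)'(s)|$, i.e. $p_2\circ\bar c_i=\widehat{\gb_i}$; substituting into \eqref{eq3:adm}, the factors $a^{\pm1}$ telescope and leave $f_{\GB}(s_1,s_2)=a\,f_{\G}(s_1,-s_2)\,a^{-1}$. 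Finally, for (3) I would compute directly from \eqref{eq:Mean}: writing $p:=\kappa_1(s_1)$ and $q:=\kappa_2(-s_2)$, the identities
$$
\psi(p)-\psi(q)=\frac{q-p}{(p\sin\theta-\cos\theta)(q\sin\theta-\cos\theta)},\qquad
1+\psi(p)\psi(q)=\frac{1+pq}{(p\sin\theta-\cos\theta)(q\sin\theta-\cos\theta)}
$$
give $H_{\GB}(s_1,s_2)=(1+pq)/(q-p)=-(1+pq)/(p-q)=-H_{\G}(s_1,-s_2)$. (Alternatively (3) follows from (2), since conjugation by $a$ is orientation-preserving on $S^3$ and the reparametrization $s_2\mapsto-s_2$ reverses the orientation of the parameter plane, flipping the sign of $H$; but the direct calculation is shorter and self-contained.)
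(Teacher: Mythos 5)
Your proof is correct and follows essentially the same route as the paper: you derive the sign dichotomy $\cos\theta-\kappa_1\sin\theta<0<\cos\theta-\kappa_2\sin\theta$, compute $\bar\kappa_i$ via Lemma~\ref{lem10:adm} (your $\psi(k)-\cot\theta=1/\bigl[(k-\cot\theta)\sin^2\theta\bigr]$ is precisely the paper's displayed formula for $\bar\kappa_i$), and take the same group element $a$, which in the paper is written explicitly as $h^{-1}g$ with $g=\exp(\theta e_1/2)$ and $Ad(h)=\operatorname{diag}(-1,-1,1)$ -- one checks $Ad(a^{-1})e_1=-e_1$ and $Ad(a^{-1})e_3=(\sin\theta)e_2+(\cos\theta)e_3$, so your abstractly chosen $a$ is the paper's. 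The only difference of substance is that the paper asserts (3) ``follows from (2)'' without detail, whereas you supply the direct computation from \eqref{eq:Mean}; that is a welcome explicit verification, and your parenthetical orientation argument is the intended shortcut.
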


\proof
By the assumption (\ref{eq70:adm}), we obtain
$$
\cos\theta - \kappa_1(s)\sin\theta < 0, \qquad
\cos\theta - \kappa_2(s)\sin\theta > 0.
$$
So it follows from (\ref{eq20:adm}) that the curves 
$\g_i^{\theta}$ are regular curves, and

\begin{equation}\label{eq80:adm}
\frac{(\g_1^{\theta})'}{|(\g_1^{\theta})'| }
= - \frac{\g_1{'}}{|\g_1{'}|}, \qquad
\frac{(\g_2^{\theta})'}{|(\g_2^{\theta})'|} 
= \frac{\g_2{'}}{|\g_2{'}|}
\end{equation}
hold.
Since the geodesic curvatures of $\gb_1$ and $\gb_2$ are given by 
$$
\bar{\kappa}_1(s) = \kappa_1^{\theta}(s),\qquad
\bar{\kappa}_2(s) = -\kappa_2^{\theta}(-s),
$$
it follows from Lemma \ref{lem10:adm} that 
\begin{align*}
\bar{\kappa}_1(s) &= 
\frac{\sin\theta + 
\kappa_1(s)\cos\theta}{-\cos\theta + \kappa_1(s)\sin\theta} 
= \cot\theta + 
\left(\frac{1}{(\kappa_1(s) - \cot\theta)\sin^2\theta}\right) 
> \cot\theta,\\
\bar{\kappa}_2(s) &= -\frac{\sin\theta + \kappa_2(-s)\cos\theta}
{\cos\theta - \kappa_2(-s)\sin\theta} 
= \cot\theta + \left(\frac{1}{(\kappa_2(-s) - 
\cot\theta)\sin^2\theta}\right) < \cot\theta.
\end{align*}
This implies the assertion (1). 
Let $c_i$ be a curve in $S^3$ such that $p_2\circ c_i
=\widehat{\g_i}$. We set 
$$
\cb_1(s):=c_1(s)g^{-1}h, \quad \cb_2(s):=c_2(-s)g^{-1}h,
$$
where $g := \exp(\theta e_1/2)$ and $h$ is 
a point of $S^3$ such that 
$$
Ad(h)e_1 = - e_1, \quad Ad(h)e_2 = - e_2, \quad Ad(h)e_3 = e_3. 
$$
Since $Ad(g^{-1})e_3 = (\sin\theta)e_2 + (\cos\theta)e_3$ 
and $Ad(c_i(s))e_2 = n_i(s)$, we obtain
$$
Ad(\cb_1(s))e_3 = \g_1^{\theta}(s) = \gb_1(s),\quad
Ad(\cb_2(s))e_3 = \g_2^{\theta}(-s) = \gb_2(s).
$$
Since $Ad(g^{-1})e_1 = e_1$, it follows from (\ref{eq80:adm}) that
$$
	Ad(\cb_1(s))e_1 = -\frac{\g_1{'}(s)}{|\g_1{'}(s)|} 
	=\frac{(\g_1^\theta)'(s)}{|(\g_1^\theta)'(s)|} 
	= \frac{\gb_1{'}(s)}{|\gb_1{'}(s)|}. 
$$
Similarly, 
$
Ad(\cb_2(s))e_1 
= {\gb_2{'}(s)}/{|\gb_2{'}(s)|} 
$
holds.
This shows that $p_2\circ \cb_i = \widehat{\gb_i}$. Hence
$$
f_{\GB}(s_1, s_2) = \cb_1(0)^{-1}\cb_1(s_1)\cb_2(s_2)^{-1}\cb_2(0) = 
af_{\G}(s_1, -s_2)a^{-1}
$$
holds, where $a = h^{-1}g$.
This implies the assertion (2).
The assertion (3) follows from (2). 
\qed

The following assertion implies that 
the assumption of  Theorem B 
depends on the choice of 
$\mu$-admissible pairs
representing the same immersed torus, in general:

\begin{corollary}\label{prop:A}
There exist 
two $\mu$-admissible pairs $(\gamma_1,\gamma_2)$ 
and $(\bar \gamma_1,\bar \gamma_2)$
satisfying the following properties;
\begin{enumerate}
\item
$f_{\gamma_1,\gamma_2\equiv 
f_{\bar \gamma_1,\bar \gamma_2}}$,
\item 
$\gamma_1$ has no negative shells and
$\gamma_2$ has no positive shells, but 
\item 
$\bar \gamma_2$ has negative shells and
$\bar \gamma_1$ has positive shells. 
\end{enumerate}
\end{corollary}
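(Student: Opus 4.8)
The plan is to start from a pair of the kind considered in the Remark following Theorem~B and to transform it by the parallel construction of Proposition~\ref{deform:T30} at parallel distance $\pi/2$. First I fix a closed regular spherical curve $\gamma_1$ whose image has the topological type $3_6$, realized inside a small geodesic disk so that its geodesic curvature satisfies $\kappa_1>1$ everywhere; such realizations exist, since one may shrink the curve of the Remark following Theorem~B towards a point, which preserves its topological type while making its geodesic curvature arbitrarily large. Put $\gamma_2(s):=\gamma_1(-s)$, so that $\kappa_2(s)=-\kappa_1(-s)<-1$, and $(\gamma_1,\gamma_2)$ is a $\mu$-admissible pair with $\mu=1$. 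Since $\gamma_1$ has positive geodesic curvature and is not simple, it has a positive shell by \cite[Prop.~3.7]{KU}, whereas type $3_6$ carries no negative shell; as reversing the parametrization of a shell reverses its sign, $\gamma_2$ has negative but no positive shells. This establishes~(2).

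Next I apply Proposition~\ref{deform:T30} with $\theta=\pi/2$, which is allowed because $\kappa_1(s_1)>1>0>-1>\kappa_2(s_2)$, so $\cot(\pi/2)=0$ separates the two geodesic curvatures. Setting $\bar\gamma_1:=\gamma_1^{\pi/2}=n_1$ and $\bar\gamma_2(s):=\gamma_2^{\pi/2}(-s)=-n_1(s)$, the proposition shows that $(\bar\gamma_1,\bar\gamma_2)$ is again an admissible pair, hence (the curves being closed) a $\bar\mu$-admissible pair for a suitable $\bar\mu>0$, and that $f_{\bar\gamma_1,\bar\gamma_2}(s_1,s_2)=a\,f_{\gamma_1,\gamma_2}(s_1,-s_2)\,a^{-1}$ for some $a\in S^3$; hence the immersed flat tori $f_{\gamma_1,\gamma_2}$ and $f_{\bar\gamma_1,\bar\gamma_2}$ are congruent in $S^3$, which is~(1). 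By Lemma~\ref{lem10:adm}\,(1), $\bar\kappa_1(s)=1/\kappa_1(s)\in(0,1)$, so $\bar\gamma_1=n_1$ has positive and $\bar\gamma_2=-n_1$ negative geodesic curvature everywhere. Granting that $n_1$ is not embedded, \cite[Prop.~3.7]{KU} gives $\bar\gamma_1$ a positive shell, and reversal of parametrization (equivalently, the antipodal map) gives $\bar\gamma_2$ a negative shell. This establishes~(3).

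The one point that needs genuine argument is that the normal indicatrix $n_1=\gamma_1\times(\gamma_1'/|\gamma_1'|)$ is not embedded, and here the smallness of $\gamma_1$ is exploited: since $\gamma_1$ lies in a small geodesic disk about a point $p$, the curve $n_1$ lies in a thin annular neighborhood of the great circle polar to $p$, and inside that annulus $n_1$ winds around a number of times equal to the rotation number $m$ of $\gamma_1$ (to leading order $n_1\approx p\times(\text{unit tangent of }\gamma_1)$). Identifying the disk with a piece of the plane, $\gamma_1$ is essentially a plane curve of positive curvature, and such a curve of rotation number $1$ is convex, hence simple, whereas type $3_6$ has three crossings; therefore $m\ge2$. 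But an embedded loop in an annulus has degree $0$ or $\pm1$, so $n_1$ must self-intersect, and for a generic choice of $\gamma_1$ this self-intersection is transversal, which is exactly what is needed to produce a shell.

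I expect this verification --- that $n_1$ is genuinely non-embedded --- to be the only real obstacle; the remainder is a routine tracking of signs through Lemma~\ref{lem10:adm} and Proposition~\ref{deform:T30}, with some care about orientation conventions, since it is the combination of $\gamma_2$ being the reversal of $\gamma_1$ with the parallel distance $\pi/2$ that turns the single special curve $\gamma_1$ into the essentially different pair $(n_1,-n_1)$, thereby changing the shell configuration --- and hence the applicability of the hypothesis of Theorem~B --- within a single congruence class of immersed flat tori.
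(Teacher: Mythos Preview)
Your proof is correct and follows the same overall strategy as the paper: take $\gamma_1$ of type $3_6$, set $\gamma_2(s)=\gamma_1(-s)$, and apply the parallel transformation of Proposition~\ref{deform:T30} at (or near) $\theta=\pi/2$ so that $(\bar\gamma_1,\bar\gamma_2)$ is built out of the normal indicatrix $n_1$. The only substantive difference lies in how the shell on $n_1$ is produced. The paper works with the explicit plane curve $\sigma_a(t)=a(4\cos t+3\cos 2t,\,-4\sin t+3\sin 2t)$, lifted to $S^2$ by stereographic projection, and simply checks by computation that $n_a=\tilde\sigma_a^{\pi/2}$ is of type $3_4$, which carries shells of both signs; you instead argue topologically, using that a positively curved plane curve of rotation number~$1$ is convex to force $m\ge 2$, and then that in the small-disk regime $n_1$ is $C^0$-close to a degree-$m$ map onto the polar great circle, hence cannot be embedded in the thin annulus, so \cite[Prop.~3.7]{KU} supplies the positive shell. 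Your route is more conceptual and avoids any explicit formula; the paper's direct verification buys a slightly stronger conclusion (both $\bar\gamma_1$ and $\bar\gamma_2$ carry shells of \emph{both} signs, which is what actually makes $(\bar\gamma_1,\bar\gamma_2)$ satisfy the hypothesis of Theorem~B), but for the statement as written your weaker conclusion suffices.
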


\begin{proof}
For $a>0$, we set
$$
\sigma_a(t):=a (4 \cos t+3 \cos 2, -4 \sin t+3 \sin 2t),
$$
which is a closed regular curve in $\R^2$
with three crossings with 
positive curvature.
Consider a stereographic projection $p:S^2\to \R^2$
from the north pole, and set
$$
\tilde \sigma_a(t):=p^{-1}\circ \sigma_a(t)
\qquad (0\le t\le 2\pi) 
$$
that gives a closed  spherical curve of type $3_6$
in Figure \ref{Fig:table1} of Appendix B.
Obviously $\gamma^a$ has no negative shells.
If $a$ is sufficiently
 small,
$\tilde \sigma_a$ is positively curved.
Moreover, 
one can easily check that $n_a(t)=\gamma^{\pi/2}_a$
is  a closed curve of type $3_4$
with positive geodesic curvature,
and has three positive shells and three negative shells
at the same time. 
We set
$$
\gamma_1(t):=\tilde \sigma_a(t),\quad
\gamma_2(t):=\tilde \sigma_a(-t)\qquad (t\in \R),
$$
then $(\gamma_1,\gamma_2)$ is a $\mu$-admissible
pair for sufficiently small $\mu>0$  (for example $a=1/7$),
since $\gamma_1$ (resp. $\gamma_2$)
has positive (resp. negative) geodesic curvature
if $a$ is sufficiently small, for example $a\le 1/7$.
By definition, $\gamma_1$ has no negative shells,
and $\gamma_2$ has no positive shells.
We set $\mu:=\cot \theta$ ($\theta\in [0,\pi/2)$).
Then $\bar \gamma_1(t)$  and $\bar \gamma_2(t)$ are 
close to $n_a(t)$ and $n_a(-t)$ if $\mu$
is chosen to be sufficiently close to zero.
Since $n_a$ is of type $3_4$,
$\bar \gamma_1$  and $\bar \gamma_2$
have positive shells and negative shells at the same time.
\end{proof}

\begin{figure}[h!]
\begin{center}
        \includegraphics[width=6.9cm]{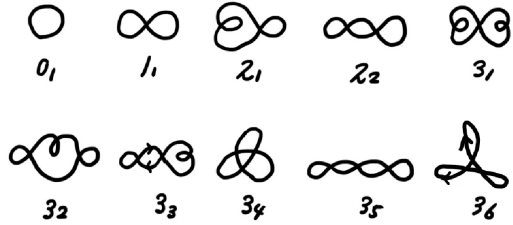}
\caption{Curves
with at most three crossings
}\label{Fig:table1}
\end{center}
\end{figure}

\section{
Tables of closed regular spherical curves
} 
\label{app:kob-U}

In \cite{KobU}, 
a table of generic closed spherical curves
with at most five crossings is given.
Figure \ref{Fig:table1} is the table within 3-crossings: 

\begin{figure}[h!]
\begin{center}
        \includegraphics[width=9.3cm]{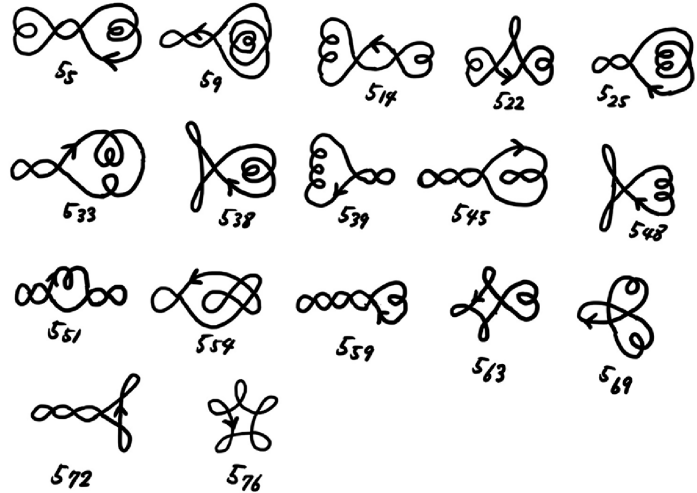}
\caption{Curves
with five crossings having only shells of the same kind
}\label{Fig:table2}
\end{center}
\end{figure}

There are in total $19$ (resp. $76$) 
curves with 4-crossings (resp. 5-crossings).
Figure \ref{Fig:table2}
 is the list of curves with 5-crossings
amongst them
whose shells are of the same kind:

\end{document}